\documentclass[reqno,english]{amsart}
\usepackage[utf8]{inputenc}
\usepackage{amsmath,amssymb,amsthm,mathrsfs,color,times,textcomp,yfonts,mathtools,cases}

\allowdisplaybreaks[4]

\usepackage{bm}

\usepackage[T1]{fontenc} 

\usepackage{subcaption}
\usepackage[normalem]{ulem}
\usepackage[export]{adjustbox}
\usepackage{esint}
\usepackage{xcolor}
\usepackage{array}
\usepackage[colorlinks=true]{hyperref}
\hypersetup{urlcolor=blue, citecolor=blue, linkcolor=red}

\hypersetup{
colorlinks=true,
linkcolor=red
}
\usepackage{indentfirst}
\usepackage{graphicx}
\usepackage{float}
\numberwithin{equation}{section}
\newtheorem{theorem}{Theorem}[section]
\newtheorem{lemma}[theorem]{Lemma}

\newtheorem{proposition}[theorem]{Proposition}

\newtheorem{definition}[theorem]{Definition}
\newtheorem{conjecture}{Conjecture}

\newtheorem{othertheorem}{Theorem}

\newtheorem{remark}{Remark}[section]

\newcommand{\Rom}[1]{%
\textup{\uppercase\expandafter{\romannumeral#1}}%
}
\newcommand\norm[1]{\left\lVert#1\right\rVert}

\newcommand*{\ud}{d}
\newcommand{\R}{\mathbb R}

\usepackage{mathtools}
\usepackage{thmtools}
\declaretheoremstyle[headfont=\normalfont]{normalhead}
\usepackage{color}
\usepackage{morefloats}
\usepackage{refcheck}
\usepackage{textcomp}
\usepackage{url}

\title[Generalized Chang-Yang conjecture]{A positive answer to the generalized Chang-Yang conjecture on $\mathbb{S}^N$}

\author{Changfeng Gui}
\address{Department of Mathematics, University of Macau, Taipa, Macau}
\email{changfenggui@um.edu.mo}

\author{Tuoxin Li}
\address{Department of Mathematics \\  The Chinese University of Hong Kong \\ Shatin \\ NT \\ Hong Kong}
\email{txli@math.cuhk.edu.hk}

\author{Juncheng Wei}
\address{Department of Mathematics \\  The Chinese University of Hong Kong \\ Shatin \\ NT \\ Hong Kong}
\email{wei@math.cuhk.edu.hk}

\author{Zikai Ye}
\address{Department of Mathematics \\  The Chinese University of Hong Kong \\ Shatin \\ NT \\ Hong Kong}
\email{zkye@math.cuhk.edu.hk}
\begin{document}

\subjclass{35B07, 35J15, 35J35, 35J60, 53A05, 53C18, 53C21} 

\keywords{Beckner’s inequality, Chang-Yang conjecture, optimal constant, logarithmic Green kernel, stable critical point}

\begin{abstract}
We prove that for every integer $N\geq 3$ and $\alpha\geq \frac{1}{2}$, Beckner's inequality
\begin{equation*}
\frac{\alpha}{2}\int_{\mathbb{S}^N}u(P_{N}u)
dw+(N-1)!\int_{\mathbb{S}^N}u
dw-\frac{(N-1)!}{N}\log\int_{\mathbb{S}^N}e^{Nu}
dw\geq 0
\end{equation*}
holds for every $u\in H^{\frac{N}{2}}(\mathbb{S}^N)$ whose center of mass is at the origin. The proof is mainly based on an integral representation formula and a rigidity theorem for stable critical points. Hence, we answer the generalized Chang-Yang conjecture positively for every integer $N\geq 3$. 
\end{abstract}

\maketitle
\hypersetup{linkcolor=black}
\tableofcontents

\section{Introduction and main results}

\subsection{Sharp Beckner's inequality}

For $\alpha>0$, we consider the functional
\begin{equation*}
J_{\alpha,N}(u):=\frac{\alpha}{2}\int_{\mathbb{S}^N}u(P_{N}u)
dw+(N-1)!\int_{\mathbb{S}^N}u
dw-\frac{(N-1)!}{N}\log\int_{\mathbb{S}^N}e^{Nu}
dw
\end{equation*}
for $u\in H^{\frac{N}{2}}(\mathbb{S}^N)$, where $dw$ denotes the normalized surface measure on $\mathbb{S}^N$ so that $\int_{\mathbb{S}^N}dw=1$. Here
\begin{equation*}
\begin{aligned}
    P_N:=
    \begin{cases}
        \prod_{k=0}^{\frac{N-2}{2}}(-\Delta+k(N-k-1)),&\text{ for }N\text{ even}, \\
        \left(-\Delta+(\frac{N-1}{2})^2\right)^{\frac{1}{2}}\prod_{k=0}^{\frac{N-3}{2}}(-\Delta+k(N-k-1)),&\text{ for }N\text{ odd},
    \end{cases}
\end{aligned}
\end{equation*}
where the square root is understood in the spectral sense. More precisely, let $\mathcal{H}_k$ be the space of spherical harmonics of degree $k$ on $\mathbb{S}^N$, then for every spherical harmonic $Y_k\in \mathcal{H}_k$ with $k\geq 1$,
\begin{equation}\label{PN:eigenvalue}
P_NY_k=\frac{\Gamma(k+N)}{\Gamma(k)}Y_k,
\end{equation}
and $P_N1=0$.

In his seminal paper \cite{Beckner1993}, Beckner proved the higher-order Moser-Trudinger-type inequality
\begin{equation*}
J_{1,N}(u)\geq 0,    \qquad u\in H^{\frac{N}{2}}(\mathbb{S}^N).
\end{equation*}
This inequality is referred to as Beckner's inequality.

When $N=4$, $P_4$ represents the Paneitz operator on $\mathbb{S}^4$ introduced by Paneitz in \cite{Paneitz2008}. When $N$ is even, the operator $P_N$ belongs to the family of conformally invariant operators $P_{N,g,k}$ with $k=\frac{N}{2}$, constructed by Graham, Jenne, Mason, and Sparling on a Riemannian manifold $(M^N, g)$ in \cite{GJMS1992}, called the GJMS operator. When $N$ is odd, $P_N$ is a nonlocal elliptic pseudo-differential operator of order $N$ and the square root is understood in the spectral sense. In both cases, for $k=\frac{N}{2}$, on $(\mathbb{S}^N,g)$, under conformal change $\tilde g=e^{2\omega}g$, the covariance law
\begin{equation}\label{conformal PN g}
P_{N,\tilde g}(\phi)=e^{-N\omega}P_{N,g}(\phi)
\end{equation}
is satisfied. We refer to \cite{CLY2019, CM2023, ChangYang1995, ChangYang1997, DHL2000,DM2008, FG2013, GHX2021, GurMal2015, LiXiong2019, Mal2006, WX1998} and the references therein for results and background on $Q$-curvature problems and GJMS operators. 

In addition, let $\xi=(\xi_1,\dots,\xi_{N+1})\in \mathbb{S}^N$,
A. Chang and P. Yang \cite{ChangYang1995} and Wei-Xu \cite{WX1998} proved that, if $u$ belongs to the zero center-of-mass set
\begin{equation*}
\mathcal{L}_{N}=\left\{u\in H^{\frac{N}{2}}(\mathbb{S}^N)\ :\ \int_{\mathbb{S}^N}e^{Nu} \xi_j dw=0,\ j=1,\cdots, N+1 \right \},
\end{equation*}
then for any $\alpha>\frac{1}{2}$, there exists a constant $C(\alpha,N)\geq0$ such that 
\begin{equation*}
J_{\alpha,N}(u)\geq -C(\alpha,N)    
\end{equation*}
for any $u\in \mathcal{L}_{N}$. This lower bound motivates the generalized Chang-Yang conjecture that, as in dimension two, $C(\alpha,N)$ can be chosen to be zero.

\begin{conjecture}[Generalized Chang-Yang conjecture]\label{Chang-Yang conj}
For any $N\geq 1$ and $\alpha \geq \frac{1}{2}$, 
\begin{equation*}
    \inf_{u \in \mathcal{L}_{N}} J_{\alpha,N}(u)=0.
\end{equation*}
\end{conjecture}

A critical point of $J_{\alpha,N}$ under the zero center of mass constraint satisfies the following $Q$-curvature-type equation on $\mathbb{S}^N$:
\begin{equation*}
\alpha P_N u+(N-1)!(1-\frac{e^{Nu}}{\int_{\mathbb{S}^N}e^{Nu}dw})=\sum_{i=1}^{N+1}a_i \xi_i e^{Nu} \ \mbox{on} \ \mathbb{S}^N
\end{equation*}
for some constants $a_i$, $i=1,\dots, N+1$.

Due to conformal invariance of $J_{1,N}$, A. Chang and P. Yang \cite{ChangYang1995} and Wei-Xu \cite[Step 1 in Theorem 2.6]{WX1998} showed that the following Kazdan-Warner condition 
\begin{equation*}
\int_{\mathbb S^N}\langle \nabla Q, \nabla \xi_i\rangle e^{Nu} \ud w=0,\ i=1,\dots, N+1,
\end{equation*}
holds for the prescribed $Q$-curvature equation
\begin{equation*}
P_{N}u + (N-1)!- Qe^{Nu}=0 \text{ on }\mathbb S^N.
\end{equation*}
It follows that for $0< \alpha\leq 1$, all Lagrange multipliers $a_i$ vanish. Thus we have the following result:
\begin{proposition}\label{Lagrange}
For every $0<\alpha\leq 1$, if $u$ is a critical point of $J_{\alpha,N}$ with constraint $\mathcal{L}_{N}$, then $u$ is a solution to
\begin{equation}\label{paneitz}
\alpha P_N u+(N-1)!\left(1-\frac{e^{Nu}}{\int_{\mathbb{S}^N}e^{Nu}dw}\right)=0 \ \mbox{on} \ \mathbb{S}^N.
\end{equation}
\end{proposition}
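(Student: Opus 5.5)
Normalize $\int_{\mathbb{S}^N}e^{Nu}\,dw=1$; this is possible because $J_{\alpha,N}$ is invariant under adding constants. A constrained critical point then satisfies
\[
\alpha P_N u+(N-1)!\bigl(1-e^{Nu}\bigr)=\sum_{i=1}^{N+1}a_i\xi_i e^{Nu}.
\]
The aim is to show $a_i=0$ for all $i$. Rewrite the equation as a prescribed $Q$-curvature equation by dividing by $\alpha$ and setting
\[
Q=\frac{(N-1)!}{\alpha}+\frac{1}{\alpha}\sum_i a_i\xi_i-\frac{(N-1)!}{\alpha}e^{-Nu}.
\]
This choice of $Q$ is inconvenient, because it depends on $u$. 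The better move is to work directly with the conformal-invariance argument of Chang--Yang and Wei--Xu. I will derive a Kazdan--Warner-type identity adapted to the operator $\alpha P_N+(N-1)!$.

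\textbf{Step 1 (Pohozaev/Kazdan--Warner identity).} Let $X_j=\nabla\xi_j$ be the conformal vector field. It generates the one-parameter family of Möbius transformations $\varphi_t$. Define
\[
u_t=u\circ\varphi_t+\tfrac1N\log|\det d\varphi_t|.
\]
Differentiate at $t=0$ the identity $J_{1,N}(u_t)=J_{1,N}(u)$, which is Beckner's conformal invariance. This gives
\[
\int_{\mathbb{S}^N} \bigl(P_Nu+(N-1)!-(N-1)!e^{Nu}\bigr)\,v_j\,dw=0,
\qquad v_j=\langle\nabla u,\nabla\xi_j\rangle+\xi_j .
\]
Next, substitute $P_Nu$ from the equation. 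Since $P_Nu=\frac1\alpha\bigl[(N-1)!(e^{Nu}-1)+\sum_i a_i\xi_ie^{Nu}\bigr]$, we get
\[
\Bigl(1-\tfrac1\alpha\Bigr)\int (N-1)!\,(1-e^{Nu})\,v_j\,dw+\tfrac1\alpha\sum_i a_i\int \xi_i e^{Nu}v_j\,dw=0 .
\]

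\textbf{Step 2 (computing the terms).} Two terms are explicit. First,
\[
\int e^{Nu}v_j\,dw=\tfrac1N\int\langle\nabla e^{Nu},\nabla\xi_j\rangle\,dw+\int\xi_je^{Nu}\,dw=(1+N)\int\xi_je^{Nu}\,dw=0
\]
on $\mathcal{L}_N$, using $-\Delta\xi_j=N\xi_j$. Second,
\[
\int v_j\,dw=\int\langle\nabla u,\nabla\xi_j\rangle\,dw+0=N\int u\,\xi_j\,dw .
\]
For the Lagrange-multiplier term, integrate by parts:
\[
\int\xi_ie^{Nu}\langle\nabla u,\nabla\xi_j\rangle\,dw=-\tfrac1N\int e^{Nu}\bigl(\langle\nabla\xi_i,\nabla\xi_j\rangle-N\xi_i\xi_j\bigr)\,dw .
\]
Then use $\langle\nabla\xi_i,\nabla\xi_j\rangle=\delta_{ij}-\xi_i\xi_j$. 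This gives a matrix $M_{ij}=\int e^{Nu}(\delta_{ij}-\xi_i\xi_j)\,dw$ up to positive factors, possibly plus $\xi_i\xi_j$ terms. The expected outcome is $\sum_iM_{ij}a_i=c(1-\alpha)\int u\,\xi_j\,dw$.

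\textbf{Step 3 (main obstacle).} The remaining term $(1-\frac1\alpha)\,N(N-1)!\int u\,\xi_j\,dw$ has no sign a priori. To close the argument I would pair the equation itself with $\xi_j$. Since $P_N\xi_j=N!\,\xi_j$ by \eqref{PN:eigenvalue}, self-adjointness gives
\[
\alpha N!\int u\,\xi_j\,dw=\sum_i a_i\int\xi_i\xi_je^{Nu}\,dw,
\]
where the $(N-1)!$ terms vanish by the constant part and the center of mass. This expresses $\int u\,\xi_j$ through $a$. Substituting it into Step 2 yields a linear system $Ba=0$ in which $B$ is a combination of $\int e^{Nu}\delta_{ij}$ and $\int e^{Nu}\xi_i\xi_j$ with coefficients depending on $\alpha$. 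The key check, and the place I expect difficulty, is to verify that for $0<\alpha\le1$ the coefficients make $B$ positive definite. This should use $\int e^{Nu}\xi_i\xi_j\le\int e^{Nu}\delta_{ij}$ as matrices, because $\sum\xi_i^2=1$, together with $1-\alpha\ge0$. Positive definiteness gives $a=0$, which is \eqref{paneitz}.
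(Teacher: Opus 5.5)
Your route is the right one and is in substance what the paper invokes: the Kazdan--Warner identity of Chang--Yang and Wei--Xu is exactly $\delta J_{1,N}(u)[v_j]=0$ along the conformal orbit. As written, however, the argument breaks at the decisive step, because $v_j$ has the wrong sign and the positivity check you postpone is never carried out.

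\textbf{The sign error.} If $\varphi_t$ is the flow of $X_j=\nabla\xi_j$, then $\frac{d}{dt}\big|_{t=0}\frac1N\log|\det d\varphi_t|=\frac1N\operatorname{div}X_j=\frac1N\Delta\xi_j=-\xi_j$. So the correct direction is $v_j=\langle\nabla u,\nabla\xi_j\rangle-\xi_j$, for which $\int e^{Nu}v_j\,dw=0$ holds identically; your value $(1+N)\int\xi_je^{Nu}dw$ is also miscomputed. Your $v_j$ differs from the correct one by $2\xi_j$. Set $M_{ij}=\int_{\mathbb{S}^N}\xi_i\xi_je^{Nu}\,dw$. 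On $\mathcal{L}_N$, pairing the equation with $\xi_j$ gives $\delta J_{1,N}(u)[\xi_j]=N!\int u\xi_j\,dw=\frac1\alpha(Ma)_j$. Hence, given the true identity, your starting identity is equivalent to $Ma=0$, i.e.\ to the conclusion $a=0$ itself; it is not a consequence of conformal invariance.

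\textbf{Why it breaks the key step.} The error lands precisely in the multiplier term. Your $v_j$ gives $\int\xi_ie^{Nu}v_j\,dw=\frac1N(M-I)_{ij}+2M_{ij}$. This leads to the system $B'a=0$ with $B'=\frac1N(I-M)+(\frac1\alpha-3)M$. At $\alpha=1$ this matrix has the negative eigenvalue $\frac{1-(2N+1)m}{N}$ along the top eigenvector of $M$, where $m\ge\frac1{N+1}$. So no positivity argument can close.

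\textbf{The fix.} With the correct sign everything works. Using $\int\xi_ie^{Nu}\langle\nabla u,\nabla\xi_j\rangle\,dw=-\frac1N\delta_{ij}+\frac{N+1}{N}M_{ij}$, one gets $\int\xi_ie^{Nu}v_j\,dw=\frac1N(M_{ij}-\delta_{ij})$. The identity then reads $(1-\frac1\alpha)N!\int u\xi_j\,dw+\frac1{\alpha N}((M-I)a)_j=0$. Inserting $\alpha N!\int u\xi_j\,dw=(Ma)_j$ gives $Ba=0$ with
\[
B=\frac1N(I-M)+\Bigl(\frac1\alpha-1\Bigr)M .
\]
Since $\langle Mb,b\rangle=\int_{\mathbb{S}^N}(b\cdot\xi)^2e^{Nu}\,dw<1$ for every unit vector $b$, $I-M$ is positive definite. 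For $0<\alpha\le1$, $(\frac1\alpha-1)M$ is positive semidefinite. Hence $a=0$, by exactly the two facts you anticipated.

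\textbf{Two smaller remarks.} First, the $u$-dependence of $Q$ is harmless. The Kazdan--Warner identity holds for whatever function $Q$ is, and with the correct $Q=\frac1\alpha\bigl((N-1)!+\sum_ia_i\xi_i\bigr)+(1-\frac1\alpha)(N-1)!e^{-Nu}$ it yields the same $B$; your $Q$ has the wrong $e^{-Nu}$ coefficient. Second, differentiating $J_{1,N}(u_t)$ presupposes that $u$ is smooth. You should first obtain this from elliptic regularity for the multiplier equation, as in Proposition~\ref{minimizerreg}.
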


For $\frac{1}{2}\leq \alpha<1$, if \eqref{paneitz} admits only constant solutions, then Conjecture \ref{Chang-Yang conj} is valid. When $\alpha<1$ is sufficiently close to $1$, Wei and Xu \cite{WX1998} proved that all solutions to \eqref{paneitz} are constants, but the full range $\alpha\in[\frac{1}{2},1)$ remains open.

On $\mathbb{S}^2$, the original conjecture of A. Chang and P. Yang in \cite{ChangYang1987, ChangYang1988} is that
\begin{conjecture}[Chang-Yang conjecture]
For $\alpha \geq \frac{1}{2}$, 
\begin{equation*}
    \inf_{u \in \mathcal{L}_{2}} J_{\alpha,2}(u)=0.
\end{equation*}
\end{conjecture}

The corresponding problem on $\mathbb{S}^2$ is known as the Nirenberg problem:
\begin{equation*}
-\alpha \Delta u + 1-  \frac{e^{2u}}{\int_{\mathbb{S}^2} e^{2u}}=0 \ \ \mbox{on} \ \mathbb{S}^2.
\end{equation*}
This problem has been extensively studied over the past four decades. See \cite{ChangYang1987, ChangYang1988, JinLiXiong2017} and the references therein. Feldman, Froese, Ghoussoub, and the first author  \cite{FFGG1998} first established the conjecture for axially symmetric functions when $\alpha >\frac{16}{25}-\epsilon$. Here we say a function $u$ on $\mathbb{S}^2$ is axially symmetric if, up to a rotation, $u=u(x)$ for $x=x_{1}\in (-1,1)$. The first and the third authors \cite{GW2000} subsequently proved the sharp axially symmetric version of the conjecture.  Later, Ghoussoub and Lin \cite{GL2010} showed that all critical points of $J_{\alpha,2}$ are axially symmetric and hence the conjecture holds true for $\frac{2}{3}-\epsilon<\alpha<1$. Finally, the first author and Moradifam \cite{GM2018} established the sphere covering inequality (see also \cite{GHM2020}) to prove that all solutions are axially symmetric. With the help of the first and the third authors' result \cite{GW2000} for axially symmetric case, they showed the full conjecture.
\begin{othertheorem}[\cite{GM2018}]\label{GM2018}
Chang-Yang conjecture holds.    
\end{othertheorem}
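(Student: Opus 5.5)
The plan is to reduce the conjecture to a classification statement for the Euler--Lagrange equation, and then to settle that classification with the sphere covering inequality. The inequality $J_{\alpha,2}(u)\ge 0$ on $\mathcal{L}_2$ is trivial for $\alpha\ge 1$: then $J_{\alpha,2}\ge J_{1,2}\ge 0$ by Onofri's (Beckner's) inequality, since $\int u(P_2u)\,dw=\int|\nabla u|^2dw\ge0$. So we fix $\frac12\le\alpha<1$. Because constants give $J=0$, it suffices to show that $J_{\alpha,2}\ge 0$ on $\mathcal{L}_2$.

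\emph{Step 1: existence of a minimizer.} For $\alpha>\frac12$, the improved Aubin-type Moser--Trudinger inequality on $\mathcal{L}_2$ (the Chang--Yang/Wei--Xu lower bound quoted above) holds with room to spare. Running it with some $\alpha'\in(\frac12,\alpha)$ gives coercivity of $J_{\alpha,2}$ modulo constants. Hence a minimizer $u$ exists on $\mathcal{L}_2$. By Proposition \ref{Lagrange} (valid since $\alpha\le1$), this minimizer solves
\[
-\alpha\Delta u+1-\frac{e^{2u}}{\int_{\mathbb S^2}e^{2u}dw}=0\quad\text{on }\mathbb S^2 .
\]
The endpoint $\alpha=\frac12$ then follows by letting $\alpha\downarrow\frac12$, since $J_{\alpha,2}(u)$ is continuous and monotone in $\alpha$ for fixed $u$.

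\emph{Step 2: all solutions are constants for $\frac12\le\alpha<1$.} This is the core. First, I would show that every solution is axially symmetric. Take $v=u-\frac1{2}\log\int e^{2u}$, so that $-\alpha\Delta v+1=e^{2v}$. Pull back to $\mathbb R^2$ by stereographic projection and compare $v$ with its reflections across planes through the origin. Where two solutions $w_1\ne w_2$ of the Liouville-type equation $\Delta w+e^{w}=\text{(weight)}$ agree on the boundary of a region $\Omega$, the sphere covering inequality gives $\int_\Omega(e^{w_1}+e^{w_2})\ge 8\pi$ (in suitable normalization). The total mass available is $4\pi/\alpha\cdot\frac{\alpha}{1}$; in the normalized setting the mass of each side is $<8\pi$ for $\alpha>\frac12$. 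This yields a contradiction unless the reflection equals the function, so $v$ is symmetric with respect to every plane for which the center of mass balancing applies. Choosing planes via a continuity/rotation argument, we obtain axial symmetry.

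\emph{Step 3: the axially symmetric case.} The problem now reduces to the ODE in $x=\xi_1\in(-1,1)$:
\[
\alpha\big((1-x^2)u'\big)'+\frac{e^{2u}}{\int e^{2u}}-1=0 .
\]
Here one invokes the sharp symmetric result of \cite{GW2000}: for $\frac12\le\alpha<1$, solutions satisfying the center-of-mass constraint are constant. The method uses $G=(1-x^2)u'$ together with Pohozaev-type identities and the test function $x$, exploiting the constraint $\int x e^{2u}=0$, to show $\int(1-x^2)|u'|^2=0$. Combining Steps 1--3, the minimizer is constant, so $\inf J_{\alpha,2}=0$.

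The main obstacle is Step 2. The sphere covering inequality must be applied to the difference of $v$ and its reflection on the nodal domains, and one must verify that the mass threshold $8\pi$ is not reached precisely when $\alpha>\frac12$. This is where I expect the delicate bookkeeping to occur.
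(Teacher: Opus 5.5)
Your overall architecture (minimizer for $\frac12<\alpha<1$, vanishing multipliers, axial symmetry of every solution via the sphere covering inequality, the axially symmetric result of \cite{GW2000}, then $\alpha\downarrow\frac12$) is the route the paper attributes to \cite{GM2018}. The gap is in Step 2, at exactly the bookkeeping you flag, and as written that bookkeeping is wrong.

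Normalize $\int_{\mathbb S^2}e^{2u}\,dw=1$ and set $w=2u+\log\frac{2}{\alpha}$, so that $\Delta_{\mathbb S^2}w+e^{w}=\frac{2}{\alpha}$. Project stereographically so that the reflecting great circle becomes a line through the origin. Then $\tilde w=w+\log\frac{4}{(1+|y|^2)^2}$ and its reflection both solve $\Delta\tilde w+e^{\tilde w}=\frac{8(1-\alpha)}{\alpha(1+|y|^2)^2}$. The right-hand side is nonnegative precisely because $\alpha\le 1$, which is the hypothesis the sphere covering inequality needs. Moreover $\int_{\mathbb R^2}e^{\tilde w}\,dy=\frac{8\pi}{\alpha}$. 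Hence for the half-plane $H$ (the image of a hemisphere),
$\int_H(e^{\tilde w}+e^{\tilde w\circ R})\,dy=\frac{8\pi}{\alpha}\in(8\pi,16\pi]$.

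So each side carries more than $8\pi$, not less. The bound ``mass of each side $<8\pi$'', on which you hang the condition $\alpha>\frac12$, is false. An arbitrary plane only gives you one nodal domain of $u-u\circ R$ per side, and that produces no contradiction for any $\alpha<1$.

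What is missing is a way to force two nodal domains of opposite sign inside one half. Once you have them, the sphere covering inequality on each is strict, since the right-hand side is not identically zero when $\alpha<1$. This gives $\int_H(e^{\tilde w}+e^{\tilde w\circ R})\,dy>16\pi\ge\frac{8\pi}{\alpha}$, and this is exactly where $\alpha\ge\frac12$, endpoint included, is used.

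The planes are not selected by the center of mass, which plays no role here, nor by a continuity/rotation argument. Take a critical point $p$ of $u$ (say a maximum point) and reflect only across great circles through $p$.
\begin{itemize}
\item Then $\psi=u-u\circ R$ vanishes on the great circle and $\nabla\psi(p)=0$, so $\psi$ vanishes to order at least two at $p$.
\item $\psi$ solves a linear equation $\alpha\Delta\psi+c\,\psi=0$ with smooth $c$. So either $\psi\equiv 0$, or, by the local structure of nodal sets (Bers, Hartman--Wintner), $\psi$ has near $p$ at least two sectors of opposite sign on each side of the circle.
\item This yields a positive and a negative nodal domain in each open hemisphere, and the mass count above gives a contradiction.
\end{itemize}
Therefore $u$ is symmetric with respect to every plane containing the line through $0$ and $p$. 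That is axial symmetry, and your Step 3 then finishes the argument.
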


Furthermore, Shi, Sun, Tian and Wei \cite{SSTW2019} showed that all even solutions are axially symmetric for $\frac{1}{4}\leq \alpha<1$. For more general results on improved Moser-Trudinger-Onofri inequalities on $\mathbb{S}^2$ and their connections with the Szeg\"o limit theorem, see \cite{ChangGui202, ChangHang2022}.

For the higher-dimensional equation \eqref{paneitz} on $\mathbb{S}^N$, as mentioned above, the third author and Xu proved the conjecture when $\alpha$ is close to $1$. Since direct treatment of Conjecture \ref{Chang-Yang conj} seems difficult, in view of the work of Ghoussoub-Lin \cite{GL2010} and  the work of the first author and Moradifam \cite{GM2018}, the conjecture under axial symmetry is a natural and crucial first step in higher dimensions. Several results have been obtained for axially symmetric solutions with $N=4,6,8$. Gui-Hu-Xie \cite{GHW2022} obtained non-constant solutions by bifurcation methods for $\frac{1}{N+1}<\alpha<\frac{1}{2}$. They also proved the axially symmetric version of the conjecture for $\alpha\geq 0.517$ ($N=4$), $\alpha \geq 0.6168$ ($N=6$) and $ \alpha \geq 0.8261$ ($N=8$). The sharp bound $\alpha\geq \frac{1}{2}$ is obtained by Li-Wei-Ye \cite{LWY2022} ($N=4$) and Gui-Li-Wei-Ye \cite{GLWY2025} using refined estimates on Gegenbauer polynomials. 

For odd $N$, there are few results on the sharp Beckner's inequality.  When $N=1$, Beckner's inequality becomes the classical Lebedev-Milin inequality.

\begin{othertheorem}[Lebedev-Milin inequality]
    For any $u\in H^1(\mathbb D)$ with $\int_{\mathbb S^1}u\ud \theta=0$, where $\mathbb D$ is the unit disk in $\mathbb R^2$,
    \begin{equation*}
        \log\left(\frac{1}{2\pi}\int_{\mathbb S^1}e^u \ud \theta\right)\leq \frac{1}{4\pi}\norm{\nabla u}_{L^2(\mathbb D)}^{2}.
    \end{equation*}
\end{othertheorem}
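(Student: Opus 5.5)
The plan is to reduce the Lebedev--Milin inequality to the one-dimensional case of Beckner's inequality and then prove that case directly by Fourier analysis. We may assume $u$ is the harmonic extension of its boundary trace, since replacing $u$ by its harmonic extension decreases $\norm{\nabla u}_{L^2(\mathbb D)}^2$ and leaves the left-hand side unchanged. Write
\[
u|_{\mathbb S^1}=\sum_{k\neq 0}\hat u_k e^{ik\theta}, \qquad \hat u_0=0 .
\]
Then $\frac{1}{4\pi}\norm{\nabla u}^2_{L^2(\mathbb D)}=\frac12\sum_{k\ge1}k|\hat u_k|^2$, which is exactly $\frac12\int_{\mathbb S^1}u(P_1u)\,dw$ with $P_1=(-\Delta)^{1/2}$, matching \eqref{PN:eigenvalue} for $N=1$. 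It therefore suffices to show $J_{1,1}(u)\ge0$ for real $u$ with mean zero.

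For the core step I would use the classical Lebedev--Milin route through analytic functions. Set $h(z)=\sum_{k\ge1}\hat u_k z^k$, so that $u=2\,\mathrm{Re}\,h$ on $\mathbb S^1$. Then $e^{u}=|e^{h}|^2$ on the circle, and hence
\[
\frac1{2\pi}\int_{\mathbb S^1}e^u\,d\theta=\sum_{n\ge0}|b_n|^2,\qquad e^{h(z)}=\sum_n b_nz^n .
\]
The goal becomes $\log\sum_n|b_n|^2\le\sum_k k|a_k|^2$ with $a_k=\hat u_k$. Differentiating $e^{h}$ gives the recursion $nb_n=\sum_{k=1}^n k a_k b_{n-k}$ with $b_0=1$. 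Applying Cauchy--Schwarz to this recursion gives
\[
|b_n|^2\le \frac1n\Big(\sum_{k=1}^n k|a_k|^2\Big)\Big(\sum_{k=0}^{n-1}|b_k|^2\Big).
\]
By induction this yields the Lebedev--Milin coefficient bound $\sum_{k=0}^n|b_k|^2\le (n+1)\exp\!\big(\frac1{n+1}\sum_{m=1}^n\sum_{k=1}^m (k|a_k|^2-\tfrac1k)\big)$.

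The main obstacle is passing from this finite-$n$ estimate to the sharp bound on the full sum. The finite-$n$ form carries the factor $(n+1)$ and averaged exponents, so it does not directly give $\log\sum|b_n|^2\le\sum k|a_k|^2$. My first attempt would avoid that limit and instead use the weighted exponential inequality on the circle in the form $\|e^{h}\|^2_{H^2}\le e^{\|h\|^2_{\mathcal D}}$, where $\mathcal D$ is the Dirichlet norm. I would prove it by expanding $e^{h}=\sum_m h^m/m!$ and using the reproducing-kernel inequality $\|f g\|_{H^2}\le\|f\|_{\mathcal D}\|g\|_{H^2}$-type bounds. This is delicate, since naive multiplier bounds lose constants.

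If that fails, I would fall back on the variational route of the paper. First show that a minimizer of $J_{1,1}$ exists on mean-zero functions; conformal (Möbius) invariance of $J_{1,1}$ lets us normalize the center of mass and recover compactness. Then show that the Euler--Lagrange equation $P_1u=1-e^{u}/\int e^u$ on $\mathbb S^1$ has only constant solutions up to Möbius transformations. For that classification I would use the fact that the harmonic extension solves a Liouville-type boundary problem, and identify the solutions explicitly as $\log|\phi'|$ for disk automorphisms $\phi$. Since $J_{1,1}$ vanishes on these, the infimum is $0$.
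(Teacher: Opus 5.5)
The paper does not prove this statement; it quotes it as the classical $N=1$ case of Beckner's inequality. Your proposal therefore has to stand on its own, and it has a genuine gap. The only step with real content, $\log\sum_{n\ge0}|b_n|^2\le\sum_{k\ge1}k|a_k|^2$, is never proved.

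\emph{The recursion route.} It only aims at the second Lebedev--Milin inequality. As you note, letting $n\to\infty$ there gives just $\sum_n|b_n|^2\le e^{1-\gamma_{\mathrm E}}\exp\bigl(\sum_k k|a_k|^2\bigr)$, with $\gamma_{\mathrm E}$ Euler's constant. Your displayed estimate does not even imply the second Lebedev--Milin inequality. Iterating it gives only $B_n\le\prod_{m=1}^n(1+A_m/m)$, where $B_n=\sum_{k\le n}|b_k|^2$ and $A_m=\sum_{k\le m}k|a_k|^2$. For $a_1=1$, $a_2=\frac12$ this yields $B_2\le\frac72$, whereas the second Lebedev--Milin inequality asserts $B_2\le 3$.

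\emph{The ``first attempt''.} It restates the goal, since $\|e^h\|_{H^2}^2=\sum|b_n|^2$ and $\|h\|_{\mathcal D}^2=\sum k|a_k|^2$. No bound $\|fg\|_{H^2}\le C\|f\|_{\mathcal D}\|g\|_{H^2}$ can hold: the multipliers of $H^2$ are $H^\infty$, and $\mathcal D\not\subset H^\infty$. Estimating $e^h=\sum_m h^m/m!$ termwise cannot be sharp either. For the extremals $h=-\log(1-cz)$, $|c|<1$, equality holds in the target while the triangle inequality is strict.

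\emph{The fallback.} As a route ``of the paper'' it is circular. Propositions \ref{minimizerexist}, \ref{minimizerreg} and Lemma \ref{Differentiable} take Beckner's inequality $J_{1,N}\ge0$ as input, as does the Chang--Yang/Wei--Xu lower bound on $\mathcal L_N$. For $N=1$ that input is exactly the Lebedev--Milin inequality. Run independently, the fallback would need two things you only assert. First, a Moser--Trudinger inequality on $\mathbb S^1$ strong enough to make an Aubin-type improvement on $\mathcal L_1$ coercive at $\alpha=1$. Second, a Liouville classification for $\Delta U=0$ in $\mathbb D$, $\partial_\nu U=e^U/\int_{\mathbb S^1}e^u\,dw-1$ on $\partial\mathbb D$.

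\emph{Minor slips.} $\frac1{4\pi}\|\nabla u\|^2_{L^2(\mathbb D)}=\sum_{k\ge1}k|\hat u_k|^2$, not half of it. The Euler--Lagrange equation is $P_1u=e^u/\int e^u\,dw-1$, with the opposite sign to what you wrote.

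The missing idea is small and fits your recursion exactly: compare with a majorant series rather than with partial sums. Define $d_n\ge0$ by $\exp\bigl(\sum_{k\ge1}k|a_k|^2z^k\bigr)=\sum_n d_nz^n$. Then $d_0=1$ and $nd_n=\sum_{k=1}^nk^2|a_k|^2d_{n-k}$. Suppose $|b_j|^2\le d_j$ for all $j<n$. Cauchy--Schwarz weighted by $d_{n-k}$ gives
\[
n^2|b_n|^2=\Big|\sum_{k=1}^n ka_kb_{n-k}\Big|^2\le\Big(\sum_{k=1}^n k^2|a_k|^2d_{n-k}\Big)\Big(\sum_{k=1}^n\frac{|b_{n-k}|^2}{d_{n-k}}\Big)\le nd_n\cdot n .
\]
Terms with $d_{n-k}=0$ have $b_{n-k}=0$ and are dropped. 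Hence $|b_n|^2\le d_n$ for every $n$, and
\[
\sum_n|b_n|^2\le\sum_nd_n=\exp\Big(\sum_kk|a_k|^2\Big),
\]
which is the sharp bound with no limiting procedure. Combine this with your (correct) reduction to the harmonic extension. Prove the inequality first for trigonometric polynomials, where $e^h$ is entire and Parseval applies. Then pass to general $u$ by Fourier truncation and Fatou's lemma. This completes the proof.
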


Using Szeg\"o limit theorem, Widom \cite{Widom1988} improved the best constant under extra orthogonality conditions.
\begin{othertheorem}\label{szego}
    For any $u\in H^1(\mathbb D)$ with $\int_{\mathbb S^1}u\ud \theta=0$ and $\int_{\mathbb S^1}e^{u}e^{ik\theta}\ud \theta=0$, for $k=1,\dots,m$,
    \begin{equation*}
        \log\left(\frac{1}{2\pi}\int_{\mathbb S^1}e^u \ud \theta\right)\leq \frac{1}{4(m+1)\pi}\norm{\nabla u}_{L^2(\mathbb{D})}^{2}.
    \end{equation*}
    Equivalently, for $ \alpha \geq \frac{1}{m+1}$,
    \begin{equation*}
        \frac{\alpha }{2}\int_{\mathbb S^1}(P_1 u)u\ud w+\int_{\mathbb S^1}u\ud w-\log\int_{\mathbb S^1}e^{u}\ud w \geq 0.
    \end{equation*}
\end{othertheorem}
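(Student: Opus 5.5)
We reduce the statement to the Szeg\H{o} limit theorem on the circle. Write $u\in H^1(\mathbb D)$ as the harmonic extension of its boundary trace (this only lowers the Dirichlet energy), and expand
\[
u(e^{i\theta})=\sum_{k\neq 0}\hat u_k e^{ik\theta},\qquad \hat u_{-k}=\overline{\hat u_k},
\]
using $\int_{\mathbb S^1}u\,d\theta=0$. Then
\[
\frac{1}{4\pi}\|\nabla u\|_{L^2(\mathbb D)}^2=\frac12\sum_{k\ne0}|k||\hat u_k|^2 .
\]
Since $P_1$ acts on $e^{ik\theta}$ with eigenvalue $|k|$ by \eqref{PN:eigenvalue} with $N=1$, this also equals $\frac12\int_{\mathbb S^1}(P_1u)u\,dw$. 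The two formulations are therefore equivalent: for $\alpha\ge \frac1{m+1}$, the term $\frac{\alpha}{2}\int u P_1 u$ dominates $\frac{1}{2(m+1)}\int uP_1u$. It suffices to prove
\[
\log\frac1{2\pi}\int e^u\,d\theta\le \frac{1}{2(m+1)}\sum_{k\ne0}|k||\hat u_k|^2 .
\]

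\textbf{Step 1 (Toeplitz determinants).} Set $f=e^u$ and let $T_n(f)=(\hat f_{j-l})_{0\le j,l\le n}$. By the strong Szeg\H{o} theorem with the Ibragimov/Johansson remainder, or more directly by the finite-$n$ inequality $\det T_n(e^{u})\le \exp\bigl(\sum_{k\ge1}k|\hat u_k|^2\bigr)$ (valid since $\hat u_0=0$), we have for every $n$
\[
\log\det T_n(f)\le \sum_{k\ge1}k|\hat u_k|^2=\frac12\sum_{k\ne0}|k||\hat u_k|^2 .
\]
This finite-$n$ bound follows from the Heine identity $\det T_n(f)=\frac{1}{(n+1)!}\mathbb E_{\mathrm{Haar}}\prod_j f(\theta_j)$ (CUE), together with the Gaussian-type bound $\mathbb E\exp\bigl(\sum_j u(\theta_j)\bigr)\le \exp\bigl(\sum_k \min(k,n+1)|\hat u_k|^2\bigr)$ due to Johansson. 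We take this bound as given.

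\textbf{Step 2 (using the orthogonality).} The hypothesis $\int e^u e^{ik\theta}\,d\theta=0$ for $1\le k\le m$ says $\hat f_{\pm k}=0$ for $1\le k\le m$. Hence $T_m(f)=\hat f_0\,I_{m+1}$ is diagonal, and
\[
\det T_m(f)=\hat f_0^{\,m+1}=\Bigl(\frac1{2\pi}\int e^u\,d\theta\Bigr)^{m+1}.
\]
Applying Step 1 with $n=m$ gives
\[
(m+1)\log\frac1{2\pi}\int e^u\,d\theta\le \frac12\sum_{k\neq0}|k||\hat u_k|^2=\frac1{4\pi}\|\nabla u\|^2_{L^2(\mathbb D)},
\]
and dividing by $m+1$ yields exactly the claim.

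\textbf{Main obstacle.} The crux is the finite-$n$ inequality in Step 1, which must hold for every $n$ and not merely as a limit. If one only invoked the limit statement of Szeg\H{o}'s theorem, one would need monotonicity of $\det T_n/\det T_{n-1}$ in $n$. This monotonicity does hold: the ratios $\det T_n/\det T_{n-1}$ are the squared distances in $L^2(f\,d\theta/2\pi)$ from $z^n$ to the span of $1,\dots,z^{n-1}$, so they decrease to the Szeg\H{o} limit $\exp(\hat{(\log f)}_0)=1$. This gives $\det T_n\ge$ (limit) and thus the lower direction only. The required upper bound therefore comes from the Johansson/CUE argument, or equivalently from the Geronimus--Case--Widom formula $\log\det T_n(f)-(n+1)\hat{(\log f)}_0\le \sum k|\hat u_k|^2$. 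Establishing that monotone upper bound is where the real work lies.
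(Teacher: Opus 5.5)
Your reduction to the harmonic extension and your Step 2 are correct. Together they form exactly the skeleton of Widom's argument, which the paper only cites (``using Szeg\H{o} limit theorem'') and does not reprove. Under the orthogonality conditions $T_m(e^u)=\hat f_0 I_{m+1}$, so everything rests on one inequality: $\det T_m(e^u)\le \exp\left(\sum_{k\ge1}k|\hat u_k|^2\right)$ for real $u$ with $\hat u_0=0$. That inequality is true, but your proposal does not establish it, because the bound you take as given is false.

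The weights $\min(|k|,n+1)$ come from the exact CUE \emph{variance} of the linear statistic $X=\sum_j u(\theta_j)$. Your ``Gaussian-type bound'' is therefore the sub-Gaussian claim $\mathbb E\, e^{X}\le e^{\operatorname{Var}X/2}$, which fails. Already for $n=0$ its right-hand side is $\exp\left(c\int u^2\,d\theta\right)$. Take a smoothed version of $u=a$ on an arc of length $2\pi a^{-2}$ and $u$ equal to a small negative constant elsewhere, so that $\int u=0$. Then $\frac1{2\pi}\int e^u\,d\theta\gtrsim e^a/a^2$, while $\int u^2\,d\theta$ stays bounded as $a\to\infty$. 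The same spike defeats the bound for every fixed $n$, since its right-hand side is controlled by $\|u\|_{L^2}$ alone. A minor slip: with Haar probability measure, Heine's identity reads $\det T_n(f)=\mathbb E_{U(n+1)}\prod_j f(\theta_j)$, without the factor $1/(n+1)!$.

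The argument you dismissed in your ``Main obstacle'' paragraph is the one that closes the gap; you read the monotonicity in the wrong direction. Put $D_n=\det T_n(e^u)$ and $D_{-1}=1$. The ratios $r_n=D_n/D_{n-1}$ are the minima of $\int|p|^2e^u\,\frac{d\theta}{2\pi}$ over monic $p$ of degree $n$. They are nonincreasing (multiply a minimizer by $z$) and tend to $G(e^u)=e^{\hat u_0}=1$ by Szeg\H{o}'s first theorem. Hence $r_n\ge1$ for all $n$. This gives $D_n\ge1$, but it also says that $D_n$ is \emph{nondecreasing} in $n$, so
\[
D_m\le \lim_{n\to\infty}D_n=\exp\left(\sum_{k\ge1}k|\hat u_k|^2\right)
\]
by the strong Szeg\H{o} theorem. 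Under $\sum k|\hat u_k|^2<\infty$ you can use Ibragimov's version. Alternatively, prove the unconstrained inequality $D_m(e^u)\le E(e^u)$ for smooth $u$ and pass to the limit in $H^{1/2}$, since $e^{u_j}\to e^u$ in $L^1$.

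This is precisely the upper bound you need, and inserting it into your Step 2 reproduces Widom's proof. The Borodin--Okounkov formula also yields the bound: for real $u$ the Fredholm determinant there is $\det(I-QH(b)H(b)^{*}Q)$ with $|b|=1$, hence lies in $[0,1]$.
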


For odd $N\geq 3$, the operator $P_N$ is a nonlocal elliptic pseudo-differential operator of order $N$. Wei-Xu \cite{WX1998} proved the generalized Chang-Yang conjecture when $\alpha$ is close to $1$. Zhang \cite{Zhang2025} recently proved the rigidity for $\alpha>1$. To the best of our knowledge, there are no other results. The non-locality of $P_N$ has been one of the main obstruction in the study of the odd-dimensional case using the integration-by-parts methods in previous even-dimensional papers \cite{GHX2021, GHW2022, GLWY2025, LWY2022}.

In this paper, we prove the generalized Chang-Yang conjecture in every dimension $N\geq 3$.
\begin{theorem}\label{main}
Let $N\geq 3$ be an integer and $\alpha\geq \frac{1}{2}$. Then
\begin{equation*}
\inf_{ u\in {\mathcal{L}_{N}}} J_{\alpha,N} (u)=0.    
\end{equation*}
Moreover, for $u\in \mathcal{L}_{N}$, $J_{\alpha,N}(u)=0$ if and only if $u$ is a constant.
\end{theorem}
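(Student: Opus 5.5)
The plan is to reduce the problem to minimizers and then show that any minimizer is constant. For $\alpha\geq 1$ there is nothing to do: $J_{\alpha,N}\geq J_{1,N}\geq 0$, because $\int u P_N u\,dw\geq 0$ and Beckner's inequality applies. By monotonicity in $\alpha$, the remaining case $\frac12\le\alpha<1$ is in fact sufficient. For $\alpha>\frac12$ the Chang--Yang/Wei--Xu lower bound comes with the improved Moser--Trudinger inequality on $\mathcal{L}_N$, which gives coercivity modulo constants. Hence the infimum of $J_{\alpha,N}$ over $\mathcal{L}_N$ is attained by some $u$, normalized so that $\int e^{Nu}dw=1$. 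The minimizer is a critical point under the constraint. By Proposition \ref{Lagrange} it therefore solves \eqref{paneitz}, and because it minimizes, its second variation restricted to the tangent space of $\mathcal{L}_N$ is nonnegative, so it is a \emph{stable} critical point. If we show that every stable solution of \eqref{paneitz} in $\mathcal{L}_N$ with $\frac12<\alpha<1$ is constant, then $\inf J_{\alpha,N}=J_{\alpha,N}(\text{const})=0$. The endpoint $\alpha=\frac12$ then follows by letting $\alpha\downarrow\frac12$. For the equality statement at $\alpha=\frac12$, any $u$ with $J=0$ is a minimizer and hence a stable critical point, and the same rigidity argument should apply, provided it does not rely on coercivity.

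For the rigidity step I would avoid integration by parts with $P_N$, since $P_N$ is nonlocal for odd $N$. Instead I would use the integral representation through the logarithmic Green kernel. On $\mathbb S^N$,
\[
u(\xi)-\bar u=\frac{(N-1)!}{\alpha}\int_{\mathbb S^N}G(\xi,\eta)\big(e^{Nu(\eta)}-1\big)\,dw(\eta), \qquad G(\xi,\eta)=\frac{1}{(N-1)!}\log\frac{1}{|\xi-\eta|}+c_N .
\]
This holds because $P_N$ applied to $\frac{1}{(N-1)!}\log\frac1{|\xi-\eta|}$ equals $\delta_\eta-1$; this identity is Beckner's, and it holds in both parities.

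The stability test functions should be the conformal directions. Write $v_j=\xi_j$ and use the first eigenfunctions, for which $P_N\xi_j=N!\,\xi_j$ and $\int \xi_j e^{Nu}=0$. The test function is $\phi=\xi_j-c_j$, with the constant $c_j$ chosen so that $\phi$ is tangent to the constraint $\int e^{Nu}=1$. Summing the stability inequality over $j$ gives
\[
\alpha N!\sum_j\int\xi_j^2\,dw \;\ge\; N(N-1)!\sum_j\int \xi_j^2 e^{Nu}\,dw,
\]
which reduces to $\alpha\ge 1$ unless the error terms from the constraint are handled more carefully. That inequality alone is not enough. The second ingredient is a Pohozaev/Kazdan--Warner identity, again derived from the integral representation by differentiating the kernel along the conformal vector fields $\nabla\xi_j$. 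It has the form
\[
\int\!\!\int \frac{\langle \xi-\eta,\ \cdot\ \rangle}{|\xi-\eta|^2}\,e^{Nu(\xi)}e^{Nu(\eta)}
\]
and it expresses $\sum_j\int \xi_j\langle\nabla u,\nabla\xi_j\rangle e^{Nu}$ through a symmetric double integral. Combining this identity with the stability inequality, tested against $\phi_j=\langle\nabla u,\nabla \xi_j\rangle+\text{(correction)}$ or against $\xi_j$, should produce an inequality of the form $(2\alpha-1)\,Q(u)\le 0$ with $Q(u)\ge 0$, and $Q(u)=0$ only when $u$ is constant.

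The main obstacle is this last step: choosing test functions in the stability inequality so that every nonlocal term can be rewritten, via the kernel representation, as a symmetric double integral in $e^{Nu(\xi)}e^{Nu(\eta)}$ whose sign is controlled exactly at the threshold $\alpha=\frac12$. The first thing I would try is the test functions $\phi_j=\int G(\cdot,\eta)\,\xi_j(\eta)\,e^{Nu(\eta)}\,dw(\eta)$, because $P_N\phi_j$ is local, namely $\xi_je^{Nu}$. With these, the quadratic form $\alpha\int\phi P_N\phi-N(N-1)!\int\phi^2e^{Nu}$ becomes
\[
\alpha\iint G\,\xi_j\eta_j\,e^{Nu(\xi)}e^{Nu(\eta)}-N(N-1)!\int\phi_j^2e^{Nu}.
\]
I would then compare this with the kernel identity $\sum_j\xi_j\eta_j=1-\frac12|\xi-\eta|^2$, which is where the factor $\frac12$ should emerge.
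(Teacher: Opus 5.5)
You have the same reduction as the paper. For $\frac12<\alpha<1$ you take a minimizer on $\mathcal{L}_N$; it is smooth, solves the equation with vanishing multipliers, and is a stable constrained critical point. You then reach $\alpha=\frac12$ by continuity, treat $\alpha\ge1$ by Beckner, and get the equality case from rigidity of stable points, which the paper proves for $\frac12\le\alpha\le1$.

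\textbf{The gap.} That rigidity statement is the whole content of the theorem, and your proposal does not prove it. The claim that the combination ``should produce'' $(2\alpha-1)Q(u)\le0$ is a hope, not an argument. Nothing in your sketch uses $N\ge3$, although the hypothesis is essential: for $N=2$ the statement needs the sphere covering inequality.

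\textbf{Your one concrete computation uses an inadmissible test function.} The constraint defining $\mathcal{L}_N$ is the center of mass, not $\int e^{Nu}dw=1$; the functional is invariant under adding constants, so normalization is free. The tangent space is therefore $\{h:\int\xi_k h\,d\mu=0,\ k=1,\dots,N+1\}$ with $d\mu=e^{Nu}dw/\int e^{Nu}dw$. The coordinate functions are transverse to it, because $M_{jk}=\int\xi_j\xi_k\,d\mu$ is positive definite. That is why you obtained $\alpha\ge1$. If the computation were legitimate, it would make the constants unstable for $\alpha<1$, whereas constants are stable for all $\alpha\ge\frac{1}{N+1}$. Separately, with the normalized measure the Green function is $\frac{2}{(N-1)!}\log\frac{1}{|\xi-\eta|}$ plus a constant. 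A factor $2$ matters when the target is the sharp constant $\frac12$.

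\textbf{The missing idea: a dichotomy on the spectrum of $M$, which has trace $1$.} Take a unit eigenvector $v$ with eigenvalue $m$, and set $x=v\cdot\xi$, $y=v\cdot\eta$, $G=\langle\nabla u,\nabla x\rangle$, $t=\frac{(N+1)m-1}{N}$.
\begin{itemize}
\item \emph{Commutator.} Conformal covariance of $P_N$ gives $P_N(X_iu)=X_i(P_Nu)-Nx_iP_Nu$ with $X_i=\nabla x_i$, valid also for odd $N$. This yields a linear equation for $G$.
\item \emph{Unstable direction.} For the admissible direction $\phi=G-\frac{t}{m}x$ one computes $\frac{1}{N!}\delta^2J_{\alpha,N}(u)[\phi,\phi]$ exactly. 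It is an explicit $H_N(\alpha,t)$ whose sign is that of $t(m-\alpha)$. So stability fails as soon as some eigenvalue satisfies $\frac{1}{N+1}<m<\alpha$.
\item \emph{Eigenvalue bound.} Let $g=e^{Nu}/\int e^{Nu}dw$, $h=g-1$, and let $\mathcal{K}$ have kernel $-\log(1-\xi\cdot\eta)$. Evaluate $\int h\,x\,\langle\nabla x,\nabla\mathcal{K}h\rangle\,dw$ in two ways: once through $u-\bar u=\alpha^{-1}\mathcal{K}h$, and once by symmetrizing the differentiated kernel. This gives $\iint\frac{(x-y)^2}{1-\xi\cdot\eta}\,d\mu\,d\mu=2(m-\alpha t)$.
\item \emph{Where the hypotheses enter.} Cauchy--Schwarz shows the same integral strictly exceeds $\frac{2m}{m+1}$. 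For $\alpha\ge\frac12$ this forces $m<\frac{1}{N-1}\le\frac12\le\alpha$. This is exactly where both $\alpha\ge\frac12$ and $N\ge3$ are used.
\item \emph{Degenerate case $M=\frac{1}{N+1}I$.} Here $t=0$ and each $X_iu$ is an admissible null direction. Stability then gives $\delta^2J_{\alpha,N}(u)[X_iu,h]=0$ for all admissible $h$. This forces $x_i(1-g)=0$, so $u$ is constant.
\end{itemize}

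Your instinct to differentiate the logarithmic kernel along $\nabla\xi_j$, and to test with $\langle\nabla u,\nabla\xi_j\rangle$ plus a correction, points toward this. But four things are absent: the eigen-decomposition of $M$, the specific correction $-\frac{t}{m}x$, the Cauchy--Schwarz bound, and the degenerate case. Without them the proof does not close.
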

\begin{remark}
In view of \cite[Proposition 1.5]{GLWY2025}, the constant $\frac{1}{2}$ is sharp when $N$ is even. Proposition \ref{optimalodd} shows $\frac{1}{2}$ is also sharp when $N$ is odd.
\end{remark}

Combining with Theorem \ref{GM2018} and Theorem \ref{szego}, the generalized Chang-Yang conjecture is confirmed in arbitrary dimension $N\geq 1$.

Different from previous works, our argument does not attempt to prove that all critical points are constants. Instead, inspired by the work of Frank and Lieb \cite{FrankLieb2012}, we show that stable critical points must be constants, which is sufficient to prove the generalized Chang-Yang conjecture in every dimension $N\geq 3$.
\begin{theorem}\label{instability}
Let $N\geq 3$ and $\frac{1}{2}\leq \alpha\leq 1$. Then every smooth stable critical point restricted to $\mathcal{L}_{N}$ is constant.
\end{theorem}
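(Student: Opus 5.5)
Following Frank and Lieb, the plan is to test the second variation of $J_{\alpha,N}$ at a stable critical point against directions built from a conformal vector field, namely the coordinate functions $\xi_j$. Let $u$ be a smooth stable critical point restricted to $\mathcal{L}_N$ with $\frac12\le\alpha\le1$. Normalize $\int_{\mathbb{S}^N}e^{Nu}\,dw=1$. By Proposition \ref{Lagrange}, $u$ solves \eqref{paneitz}, so $\alpha P_Nu=(N-1)!(e^{Nu}-1)$. The second variation in a direction $\varphi$ is
\begin{equation*}
Q(\varphi)=\alpha\int_{\mathbb{S}^N}\varphi P_N\varphi\,dw-N(N-1)!\Big(\int_{\mathbb{S}^N}\varphi^2e^{Nu}dw-\Big(\int_{\mathbb{S}^N}\varphi e^{Nu}dw\Big)^2\Big).
\end{equation*}
Stability means $Q(\varphi)\ge0$ for all $\varphi$ tangent to $\mathcal{L}_N$, i.e.\ $\int_{\mathbb{S}^N}\varphi\,\xi_j e^{Nu}\,dw=0$ for all $j$.

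\textbf{Choice of test functions.} The natural choice is $\varphi_j=\xi_j-c_j$ with $c_j=\int\xi_j e^{Nu}dw=0$, since $u\in\mathcal L_N$. Tangency would require $\int\xi_j\xi_k e^{Nu}\,dw=0$, which fails in general. Instead I would apply the Frank--Lieb trick and use a conformal change. Pick a Möbius map $\Phi$ so that the pulled-back density $e^{Nu}$ composed with $\Phi$ has centered second-moment structure. More robustly, I would restrict to the span of $\{\xi_j\}$ intersected with the $(N+1)$ constraints. This is a linear system of $N+1$ conditions on an $(N+1)$-dimensional space, so I would enlarge to degree-two harmonics or to $\varphi=\xi_j\,g$ with suitable $g$.

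The cleaner route is to sum $Q$ over an orthonormal family and use $\sum_j\xi_j^2=1$. Then
\begin{equation*}
\sum_j\int\xi_jP_N\xi_j\,dw=(N+1)N!,\qquad \sum_j\int\xi_j^2e^{Nu}\,dw=1.
\end{equation*}
To enforce tangency, I would replace $\xi_j$ by $\xi_j\circ\Phi$ or by functions of the form $\psi_j=e^{-\frac{N}{2}u}$-weighted variants, and derive the key identity through the integral representation
\begin{equation*}
u(\xi)=\frac{(N-1)!}{\alpha}\cdot\frac{1}{\gamma_N}\int_{\mathbb{S}^N}\log\frac{1}{|\xi-\eta|}\,e^{Nu(\eta)}\,dw(\eta)+\mathrm{const},
\end{equation*}
which is the logarithmic Green kernel of $P_N$. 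This representation handles the nonlocality of $P_N$ for odd $N$.

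\textbf{Main step.} Plugging test functions built from $u$ itself, such as $\varphi=\langle\nabla u,\nabla\xi_j\rangle$ (the infinitesimal conformal action), into $Q$, summing over $j$, and combining with the Kazdan--Warner/Pohozaev identity should give
\begin{equation*}
0\le\sum_jQ(\varphi_j)=-(1-\alpha)\,C\int|\nabla u|^2e^{Nu}\,dw-(2\alpha-1)\,C'\int(\cdots)\,dw,
\end{equation*}
where both terms are nonpositive for $\frac12\le\alpha\le1$. This forces $\nabla u\equiv0$ when $\alpha<1$. At $\alpha=1$ the classification of solutions, which are conformal factors, together with the center-of-mass constraint gives constancy.

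\textbf{Main obstacle.} The hardest step is computing $\int\varphi_jP_N\varphi_j$ for the nonlocal $P_N$. This requires the commutator of $P_N$ with the conformal vector fields $\nabla\xi_j$, which I would obtain from the covariance law \eqref{conformal PN g} or via the kernel representation. I would then need the resulting expression to combine with the eigenvalue formula \eqref{PN:eigenvalue}, so that the sign in the display above is exactly governed by $2\alpha-1\ge0$.
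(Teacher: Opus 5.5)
Your ingredients are the right ones: the directions $G_j=\langle\nabla u,\nabla\xi_j\rangle$, the commutator of $P_N$ with the conformal fields obtained from \eqref{conformal PN g}, and the logarithmic kernel. But the ``main step'' is only asserted, and as you set it up it cannot work, for two reasons.

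First, the $G_j$ are not admissible. By \eqref{xjGi:dmu}, $\int_{\mathbb S^N}\xi_kG_j\,d\mu=\frac{N+1}{N}\bigl(M_{jk}-\frac{\delta_{jk}}{N+1}\bigr)$ with $M_{jk}=\int_{\mathbb S^N}\xi_j\xi_k\,d\mu$. So $G_j\in T_u\mathcal L_N$ only when $M=\frac1{N+1}I$.

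Second, summing over $j$ destroys all information. Testing \eqref{equation:Gi} against $G_j$ and using \eqref{Gi:mean}, \eqref{xjGi:dw}, \eqref{xjGi:dmu} gives
\begin{equation*}
\frac{1}{N!}\,Q(G_j)=t_j\Bigl(\frac{1}{\alpha(N+1)}-1\Bigr),\qquad t_j:=\frac{(N+1)M_{jj}-1}{N}.
\end{equation*}
Since $\sum_jt_j=\frac{N+1}{N}(\operatorname{tr}M-1)=0$, you get $\sum_jQ(G_j)=0$ for every solution. The directions with $M_{jj}<\frac1{N+1}$ contribute positively and exactly cancel the negative ones. No identity with a strictly negative right-hand side, like your display, can come out of this sum. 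A Pohozaev or Kazdan--Warner identity does not change that; Kazdan--Warner is only needed to kill the Lagrange multipliers (Proposition \ref{Lagrange}).

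The missing idea is to work in a single eigendirection of $M$ and to control its eigenvalues. Suppose $M\neq\frac1{N+1}I$. Take a unit eigenvector $v$ with eigenvalue $m>\frac1{N+1}$, and set $x=v\cdot\xi$, $G=Xu$, $t=\frac{(N+1)m-1}{N}>0$. Correct $G$ to the tangent direction $\phi=G-\frac tm x$.

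The correction costs positive energy: one finds $\frac1{N!}\delta^2J_{\alpha,N}(u)[\phi,\phi]=H_N(\alpha,t)$, whose sign is that of $t(m-\alpha)$. So you need $m<\alpha$, whereas a priori only $m<1$ is known. This is exactly what Theorem \ref{eigenest} supplies: $m<\frac1{N-1}\le\frac12\le\alpha$.

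The proof of that estimate is where the logarithmic kernel is actually used. With $h=g-1$ and $D_v=\frac{(x-y)^2}{1-\xi\cdot\eta}$, evaluating $\int hxX(\mathcal Kh)\,dw$ in two ways gives $\iint D_v\,d\mu\,d\mu=2(m-\alpha t)$. Cauchy--Schwarz gives $\iint D_v\,d\mu\,d\mu>\frac{2m}{m+1}$. With $\alpha\ge\frac12$ these force $(m-1)((N-1)m-1)>0$, hence $m<\frac1{N-1}$. In your proposal the kernel representation is written down but never used for anything.

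Finally, the case $M=\frac1{N+1}I$ needs a separate argument. There each $G_j$ is tangent with $\delta^2J_{\alpha,N}(u)[G_j,G_j]=0$, so stability makes $G_j$ a null direction of the form on $T_u\mathcal L_N$. Then \eqref{equation:Gi} yields $\int x_j(1-g)h\,dw=0$ for all tangent $h$. Hence $x_j(1-g)\equiv0$ for every $j$, and $g\equiv1$.
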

The stability here is defined in Definition \ref{def:stability}. This is one of the key ingredients of the paper.

\medskip

For a smooth solution $u$ of \eqref{paneitz}, define
\begin{equation}\label{def:gamma-g-M}
\gamma=\int_{\mathbb{S}^N}e^{Nu}dw, \qquad g:=\frac{e^{Nu}}{\gamma},
\qquad
d\mu:=g\ud w,
\qquad
M_{ij}:=\int_{\mathbb S^N}\xi_i\xi_j\,d\mu(\xi).
\end{equation}

Then the center-of-mass condition becomes
\begin{equation*}
\int_{\mathbb{S}^N}\xi d\mu=0
\end{equation*}

The matrix $M$ is symmetric and positive definite with
\begin{equation*}
\text{tr}M=\int_{\mathbb{S}^N}|\xi|^2d\mu=1.
\end{equation*}

Another key ingredient is the following estimate of eigenvalues of $M$.  
\begin{theorem}\label{eigenest}
Assume $N\geq 3$, $\alpha\geq \frac{1}{2}$. Let $u\in \mathcal{L}_N$ be a smooth solution of \eqref{paneitz}. If $m>\frac{1}{N+1}$ is an eigenvalue of $M$, then
\begin{equation*}
m<\frac{1}{N-1}.
\end{equation*}
\end{theorem}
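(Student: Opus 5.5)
The plan is to turn the eigenvalue bound into a Pohozaev/Kazdan--Warner type identity. The identity comes from testing \eqref{paneitz} against a conformal-type vector field weighted by the coordinate itself. We then evaluate it through the integral representation of $u$ by the logarithmic Green kernel of $P_N$.

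First, rotate coordinates so that the eigenvector of $M$ for $m$ is $e_1$. Then $m=\int_{\mathbb S^N}\xi_1^2\,d\mu$ and $\int\xi_1\xi_j\,d\mu=0$ for $j\neq 1$. Since $P_N\big(\log\frac{1}{|\xi-\eta|}\big)=(N-1)!\,(\delta_\eta-1)$ with respect to $dw$, equation \eqref{paneitz} gives the representation
\begin{equation*}
u(\xi)=\frac1\alpha\int_{\mathbb S^N}\log\frac{1}{|\xi-\eta|}\,d\mu(\eta)+c .
\end{equation*}
Differentiating, and using $\nabla\xi_1=e_1-\xi_1\xi$ together with $|\xi-\eta|^2=2(1-\xi\cdot\eta)$, we get
\begin{equation*}
\langle\nabla\xi_1,\nabla u\rangle(\xi)=\frac1\alpha\int_{\mathbb S^N}\Big(\frac{\xi_1}{2}-\frac{\xi_1-\eta_1}{|\xi-\eta|^2}\Big)d\mu(\eta).
\end{equation*}

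Next, compute $\int\xi_1\langle\nabla\xi_1,\nabla u\rangle\,g\,dw$ in two ways. Since $g\nabla u=\frac1N\nabla g$, integrating by parts with $\Delta\xi_1=-N\xi_1$ and $|\nabla\xi_1|^2=1-\xi_1^2$ gives
\begin{equation*}
\int_{\mathbb S^N}\xi_1\langle\nabla\xi_1,\nabla u\rangle\,d\mu=-\frac1N\int_{\mathbb S^N}(1-\xi_1^2-N\xi_1^2)\,d\mu=\frac{(N+1)m-1}{N}.
\end{equation*}
On the other hand, insert the kernel formula and symmetrize in $(\xi,\eta)$. Writing
\begin{equation*}
I_1:=\iint\frac{(\xi_1-\eta_1)^2}{|\xi-\eta|^2}\,d\mu\,d\mu ,
\end{equation*}
we obtain the identity
\begin{equation*}
\frac{(N+1)m-1}{N}=\frac{1}{2\alpha}\left(m-I_1\right).
\end{equation*}
As a sanity check, summing the analogous identities over all $N+1$ principal directions gives $0=0$, because $\sum_i(\xi_i-\eta_i)^2=|\xi-\eta|^2$ and $\operatorname{tr}M=1$. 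Now, if $m>\frac1{N+1}$ the left-hand side is positive, so $m>I_1$. Since $\alpha\ge\frac12$, this yields
\begin{equation*}
(N+1)m-1\le N(m-I_1), \qquad\text{i.e.}\qquad m\le 1-NI_1 .
\end{equation*}
So the whole problem reduces to a lower bound on $I_1$ in terms of $m$, and that is the main obstacle.

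The crude bound $|\xi-\eta|^2\le 4$, together with the zero center of mass condition, gives $I_1\ge\frac14\iint(\xi_1-\eta_1)^2=\frac m2$. Plugging this in yields only $m\le\frac{2}{N+2}$, which suffices for $N=3$ but not for larger $N$. So a finer estimate is needed.

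What I would try first is to exploit the weight more carefully. Write $\frac{1}{|\xi-\eta|^2}=\frac{1}{2(1-\xi\cdot\eta)}$ and expand it as a series in $\xi\cdot\eta$. Then use that $\iint(\xi_1-\eta_1)^2(\xi\cdot\eta)^k\,d\mu\,d\mu$ is controlled from below by moments of $\mu$: for $k=1$ the term equals $2\sum_j\big(\int\xi_1^2\xi_j\,d\mu\big)\cdot 0-2\sum_j(\int\xi_1\xi_j d\mu)^2\ge -2m^2$ after using $\int\xi\,d\mu=0$ and that $e_1$ is an eigenvector, and higher $k$ give nonnegative Gram-type contributions. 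This should give a bound of the form $I_1\ge\frac m2\cdot\phi(m)$ that is strong enough to force $m<\frac1{N-1}$.

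If this is not enough, I would combine the identity for direction $1$ with the identities for the remaining eigen-directions $j\ne1$, using $\sum_i I_i=1$ and $\sum_i m_i=1$, to trade $I_1$ against $\sum_{j\ne1}I_j\le\sum_{j\neq 1}\iint\frac{(\xi_j-\eta_j)^2}{|\xi-\eta|^2}\,d\mu\,d\mu$. Strictness of the final inequality should come from the fact that equality in the kernel bounds would force $\mu$ to concentrate at antipodal points, which is impossible for the smooth positive density $g$.
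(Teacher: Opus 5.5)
Your architecture matches the paper's: the log-kernel representation of $u$, the integration-by-parts value $\int xG\,d\mu=t$, and symmetrization to an identity linking $t$, $m$ and $I_1$. However, there are two genuine problems.

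\textbf{The kernel constant is off by a factor of $2$.} With $dw$ normalized, Lemma~\ref{KH} gives $P_N[-\log(1-\xi\cdot\eta)]=(N-1)!(\delta_\eta-1)$. Hence $P_N\log\frac{1}{|\xi-\eta|}=\frac{(N-1)!}{2}(\delta_\eta-1)$. You can check this on degree-one harmonics: $\int\log\frac{1}{|\xi-\eta|}\,\eta_i\,dw(\eta)=\frac{1}{2N}\xi_i$ and $P_N\xi_i=N!\,\xi_i$. So $u=\frac{2}{\alpha}\int\log\frac{1}{|\xi-\eta|}\,d\mu(\eta)+c$, and the correct identity is $t=\frac1\alpha(m-I_1)$, i.e.\ $I_1=m-\alpha t$. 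This is Theorem~\ref{iintDVidentity}, since $D_v=2(x-y)^2/|\xi-\eta|^2$. Your trace check cannot detect the error, because both sides are homogeneous. With the right constant, $\alpha\ge\frac12$ and $t>0$ give only $I_1\le m-\frac t2$. Your crude bound $I_1\ge\frac m2$ then yields $t\le m$, i.e.\ $m\le 1$, which is no information at all, even for $N=3$.

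\textbf{The key lower bound on $I_1$ is never supplied.} You correctly call it the main obstacle, but the routes you sketch do not reach the threshold. The $k\le 1$ part of your series is just the pointwise bound $\frac{1}{1-s}\ge 1+s$. It gives $I_1\ge m-m^2$ and hence only $t\le 2m^2$, which is vacuous for $N=3,4,5$ and never excludes $m=\frac{1}{N-1}$. Your claim that the odd terms $k\ge 3$ are nonnegative is also unjustified: $s^k$ changes sign, and $\iint x^2 s^k\,d\mu\,d\mu$ has no definite sign.

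The missing idea is Cauchy--Schwarz against the weight $1-s$:
\begin{equation*}
\Big(\iint (x-y)^2\,d\mu\,d\mu\Big)^2\le\iint\frac{(x-y)^2}{1-s}\,d\mu\,d\mu\,\iint (x-y)^2(1-s)\,d\mu\,d\mu .
\end{equation*}
The zero center of mass and $Mv=mv$ give $\iint(x-y)^2\,d\mu\,d\mu=2m$ and $\iint(x-y)^2(1-s)\,d\mu\,d\mu=2m(m+1)$; the latter is exactly your own $k=1$ computation. Hence $I_1\ge\frac{m}{m+1}$. The inequality is strict because $1-\xi\cdot\eta$ is not $\mu\otimes\mu$-a.e.\ constant. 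Then $m-\frac t2\ge I_1>\frac{m}{m+1}$ gives $\frac t2<\frac{m^2}{m+1}$, i.e.\ $(m-1)\big((N-1)m-1\big)>0$, so $m<\frac{1}{N-1}$.

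This bound is exactly tight at $m=\frac{1}{N-1}$: both $m-\frac t2$ and $\frac{m}{m+1}$ equal $\frac1N$ there. So any lower bound on $I_1$ that falls below $\frac{m}{m+1}$ at that point, such as $m-m^2$, cannot yield the theorem.
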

This eigenvalue estimate on $M$ enables us to construct an unstable direction for non-constant solutions.

\subsection{Outline of the paper}

This paper is organized as follows. In Section \ref{preliminary}, we  establish the existence and regularity of a minimizer and prove some useful integral identities. Estimates of eigenvalues of $M$ as in Theorem \ref{eigenest} are established in Section \ref{weightedl^2est} with an integral representation of $P_N^{-1}$. Section \ref{Sec: instability} proves the rigidity of stabile solutions and proves Theorem \ref{instability}. Proof of Theorem \ref{main} is presented in Section \ref{proofmain}. The optimality of $\alpha=\frac{1}{2}$ is shown in Appendix \ref{optimal}. Appendix \ref{Technicalities} includes the differentiablity of some functionals.

\section{Preliminaries}\label{preliminary}
In this section, we collect some known facts about the equation, and prove some integral identities.

\subsection{Existence and regularity of a minimizer}
In this subsection, we prove that the infimum of $J_{\alpha,N}$ over $\mathcal{L}_{N}$ is attained for $\frac{1}{2}<\alpha<1$.

\begin{proposition}\label{minimizerexist}
Let $N\geq 3$ and $\frac{1}{2}<\alpha<1$. Then  $J_{\alpha,N}$ attains its infimum on $\mathcal{L}_{N}$.
\end{proposition}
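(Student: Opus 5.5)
We use the direct method of the calculus of variations, with the improved Moser–Trudinger-type bound of Chang–Yang and Wei–Xu supplying coercivity. Since $J_{\alpha,N}(u+c)=J_{\alpha,N}(u)$ and $\mathcal{L}_N$ is invariant under adding constants, we may normalize $\int_{\mathbb{S}^N}u\,dw=0$. Fix $\alpha\in(\frac12,1)$ and choose $\beta\in(\frac12,\alpha)$, say $\beta=\frac{1}{2}(\alpha+\frac12)$. The Chang–Yang/Wei–Xu result recalled in the introduction gives, for all $u\in\mathcal{L}_N$,
\begin{equation*}
\frac{(N-1)!}{N}\log\int_{\mathbb{S}^N}e^{N(u-\bar u)}\,dw\leq \frac{\beta}{2}\int_{\mathbb{S}^N}uP_Nu\,dw+C(\beta,N).
\end{equation*}
It follows that
\begin{equation*}
J_{\alpha,N}(u)\geq \frac{\alpha-\beta}{2}\int_{\mathbb{S}^N}uP_Nu\,dw-C(\beta,N).
\end{equation*}
Thus $\inf_{\mathcal{L}_N}J_{\alpha,N}>-\infty$. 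Moreover, any minimizing sequence $u_k$ with $\bar u_k=0$ is bounded in $H^{N/2}(\mathbb{S}^N)$, because $\int uP_Nu$ controls the $H^{N/2}$ seminorm on mean-zero functions by \eqref{PN:eigenvalue}.

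Passing to a subsequence, we obtain $u_k\rightharpoonup u$ weakly in $H^{N/2}$ and $u_k\to u$ strongly in $L^2$. The quadratic term is weakly lower semicontinuous and the linear term passes to the limit. For the nonlinear term, Beckner's inequality $J_{1,N}\ge0$ applied to $pu_k$ gives uniform bounds $\int e^{pN|u_k|}dw\le C(p)$ for every $p$. Combining this with $|e^{a}-e^{b}|\le|a-b|(e^{a}+e^{b})$ and Hölder's inequality, we get $e^{Nu_k}\to e^{Nu}$ in $L^q$ for every $q<\infty$. Hence $\log\int e^{Nu_k}\to\log\int e^{Nu}$, and also $\int e^{Nu_k}\xi_j\,dw\to\int e^{Nu}\xi_j\,dw=0$. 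So $u\in\mathcal{L}_N$, the constraint is weakly closed, and $J_{\alpha,N}(u)\le\liminf J_{\alpha,N}(u_k)$. Therefore $u$ is a minimizer.

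The delicate point is the strong convergence of $e^{Nu_k}$. This requires exponential integrability uniform in $k$ for all multiples of $u_k$, and that is exactly what Beckner's inequality (the case $\alpha=1$, with no constraint) provides once the $H^{N/2}$ bound is in hand. The coercivity itself is supplied entirely by the improved inequality of \cite{ChangYang1995,WX1998} for $\alpha>\frac12$, and the strict inequality $\alpha>\beta>\frac12$ is what makes the argument work.

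Regularity of the minimizer is then handled via the Euler–Lagrange equation. The constraint functionals $u\mapsto\int e^{Nu}\xi_j\,dw$ are $C^1$ with linearly independent differentials at $u$: their differentials are $Ne^{Nu}\xi_j$, and $\xi_j$ are independent on any set of positive measure. By Proposition \ref{Lagrange}, $u$ therefore solves \eqref{paneitz}. Since $e^{Nu}\in L^q$ for all $q<\infty$, inverting $P_N$ on mean-zero functions gains $N$ derivatives, which gives $u\in W^{N,q}\subset C^{0,\gamma}$. Bootstrapping then yields $u\in C^\infty$.
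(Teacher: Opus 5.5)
Your proposal is correct and follows essentially the same route as the paper's proof. Coercivity comes from the Chang--Yang/Wei--Xu bound $J_{\beta,N}\ge -C$ with $\tfrac12<\beta<\alpha$; this gives an $H^{N/2}$-bounded minimizing sequence, and uniform exponential integrability from Beckner's inequality lets you pass to the limit in the logarithmic term and in the center-of-mass constraint. The minor differences: you use the mean-value bound plus H\"older where the paper uses Vitali, and your $L^q$ convergence claim needs $u_k\to u$ in $L^{2q}$ (which follows from the compact embedding $H^{N/2}\hookrightarrow L^r$), although $L^1$ convergence of $e^{Nu_k}$ already suffices. Your final regularity paragraph is not needed for this statement; it reproduces the paper's separate Proposition~\ref{minimizerreg}.
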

\begin{proof}
Since $J_{\alpha,N}$ and $\mathcal{L}_{N}$ are invariant under addition of constants, we may assume
\begin{equation*}
\int_{\mathbb{S}^N}udw=0.
\end{equation*}

Choose $\alpha_0$ with $\frac{1}{2}<\alpha_0<\alpha$. By Chang-Yang \cite[Lemma 4.6]{ChangYang1995} and Wei-Xu \cite[proof of Theorem 2.6]{WX1998}, $J_{\alpha_0,N}(u)\geq -C$ for some $C=C(\alpha_0,N)$ for every $u\in \mathcal{L}_N$. Then we have
\begin{equation}\label{Jsplit}
J_{\alpha,N}(u)=J_{\alpha_0,N}(u)
+\frac{\alpha-\alpha_0}{2}\int_{\mathbb{S}^N}uP_{N}u dw
\geq-C+\frac{\alpha-\alpha_0}{2}\int_{\mathbb{S}^N}uP_{N}u dw.
\end{equation}

Since $J_{\alpha,N}(0)=0$ and $0\in \mathcal{L}_{N}$, we can choose a minimizing sequence $u_j$ with 
\begin{equation*}
\int_{\mathbb{S}^N}u_jdw=0
\end{equation*}
and $J_{\alpha,N}(u_j)\leq 1$. 

By the spectral formula \eqref{PN:eigenvalue}, for any mean-zero function $u$, the quadratic form
\begin{equation*}
\int_{\mathbb{S}^N}uP_{N}udw    
\end{equation*}
is equivalent to $\|u\|_{H^{\frac{N}{2}}(\mathbb{S}^N)}^2$. Then \eqref{Jsplit} implies that $\{u_j\}$ is uniformly bounded in $H^{\frac{N}{2}}(\mathbb{S}^N)$. Hence, up to a subsequence, we have
\begin{equation*}
u_j\rightharpoonup u \text{ in }H^{\frac{N}{2}}(\mathbb{S}^N),\qquad  u_j\to u\text{ in }L^q(\mathbb{S}^N)\text{ for any }1<q<\infty
\end{equation*}
and $u_j\to u$ almost everywhere.

For fixed $p>1$, Beckner's inequality applied to $pu_j$ gives that
\begin{equation*}
\log\int_{\mathbb{S}^N}e^{Npu_j}dw\leq \frac{Np^2}{2(N-1)!}\int_{\mathbb{S}^N}u_j P_{N}u_jdw.
\end{equation*}
Therefore $\{e^{Nu_j}\}$ is uniformly bounded in $L^p(\mathbb{S}^N)$. Since $u_j\to u$ almost everywhere, by Vitali's convergence theorem, we have
\begin{equation*}
e^{Nu_j}\to e^{Nu}\text{ in }L^1(\mathbb{S}^N).
\end{equation*}
Hence the center-of-mass condition is also preserved. By weak lower semi-continuity of the quadratic term $\int_{\mathbb{S}^N}uP_{N}udw$, taking $j\to\infty$, we see that $u$ is a minimizer and $u\in \mathcal{L}_{N}$.
\end{proof}

Next we show that $u$ is a smooth function.
\begin{proposition}\label{minimizerreg}
Let $N\geq 3$ and $\frac{1}{2}\leq\alpha\leq1$. If $u\in H^{\frac{N}{2}}(\mathbb{S}^N)$ is a minimizer of $J_{\alpha,N}$ over $\mathcal{L}_{N}$, then $u\in C^{\infty}(\mathbb{S}^N)$.
\end{proposition}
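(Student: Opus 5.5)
We first derive the Euler–Lagrange equation with Lagrange multipliers and then run a bootstrap. The constraint map $G(u)=\big(\int_{\mathbb{S}^N}e^{Nu}\xi_j\,dw\big)_{j=1}^{N+1}$ is $C^1$ on $H^{\frac N2}(\mathbb{S}^N)$. This follows from Beckner's inequality, which gives $e^{Nu}\in L^p$ for every $p<\infty$, locally uniformly in $u$. The derivative of $G$ is $DG(u)[v]=N\int e^{Nu}\xi_j v\,dw$.

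This derivative is surjective. If $\sum_j c_j\xi_j e^{Nu}$ annihilated all $v$, then $\sum_j c_j\xi_j e^{Nu}=0$ a.e., forcing $c=0$. Hence the Lagrange multiplier rule applies at the minimizer, and $u$ is a weak solution of
\begin{equation*}
\alpha P_N u+(N-1)!\Big(1-\frac{e^{Nu}}{\int_{\mathbb{S}^N}e^{Nu}dw}\Big)=\sum_{i=1}^{N+1}a_i\xi_i e^{Nu}
\end{equation*}
in the distributional sense, for some constants $a_i$.

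We do not need the multipliers to vanish here, since the right-hand side has the same regularity as $e^{Nu}$. (For $\alpha\le 1$ one could invoke Proposition \ref{Lagrange} formally, but that would presuppose smoothness.)

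Next we bootstrap. Write the equation as $P_N u=f$ with
\begin{equation*}
f=\alpha^{-1}\Big[-(N-1)!+e^{Nu}\Big((N-1)!\gamma^{-1}+\sum_i a_i\xi_i\Big)\Big].
\end{equation*}
Since $e^{Nu}\in L^p$ for all $p<\infty$, we have $f\in L^p$. By \eqref{PN:eigenvalue}, $P_N$ is invertible on mean-zero functions, and $P_N^{-1}$ is a pseudo-differential operator of order $-N$. Elliptic $L^p$ theory on the sphere (Calderón–Zygmund for pseudo-differential operators, or the explicit logarithmic kernel representation of $P_N^{-1}$) then gives $u-\bar u\in W^{N,p}$ for all $p$. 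Hence $u\in C^{N-1,\beta}$, and in particular $u$ is continuous and bounded.

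From there, $e^{Nu}\in C^{N-1,\beta}$, so $f\in C^{N-1,\beta}$. Schauder estimates for $P_N^{-1}$ give $u\in C^{2N-1,\beta}$. Iterating yields $u\in C^\infty(\mathbb{S}^N)$.

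The main obstacle is the first gain of regularity. We must justify that $f\in L^p$ for large $p$ and that $P_N^{-1}$ maps $L^p$ into $W^{N,p}$ (or at least into $L^\infty$/Hölder spaces) in odd dimensions, where $P_N$ is nonlocal.

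To handle this, we would use the fact that $P_N^{-1}$ on mean-zero functions has kernel $c_N\log\frac{1}{|\xi-\eta|}+\text{const}$. This gives
\begin{equation*}
u(\xi)-\bar u=c_N\int_{\mathbb{S}^N}\log\frac{1}{|\xi-\eta|}\,f(\eta)\,dw(\eta).
\end{equation*}
With $f\in L^p$, $p>1$, Hölder's inequality immediately shows $u\in L^\infty$, and indeed $u\in C^{0,\beta}$.

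Once $u$ is bounded, the higher regularity follows from standard elliptic/Schauder theory for the operator $P_N$, whose symbol is elliptic of order $N$. On spherical harmonics it is comparable to $(-\Delta+(\tfrac{N-1}{2})^2)^{N/2}$, so one can transfer estimates from fractional powers of the Laplacian. Differentiating the integral representation repeatedly also works.
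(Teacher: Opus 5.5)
Your proposal is correct and follows essentially the same route as the paper. Both arguments use the regular constraint and the Lagrange multiplier rule, then Beckner's inequality to get $e^{Nu}\in L^p$ for all $p<\infty$, then the $W^{N,p}$ estimate for $P_N$ (via Seeley's pseudo-differential calculus when $N$ is odd), the embedding into $C^{N-1,\beta}$, and a bootstrap. The one real difference is that you carry the multipliers $a_i$ through the regularity argument instead of invoking Proposition \ref{Lagrange} before smoothness is known. That ordering is slightly cleaner, because the Kazdan--Warner argument behind Proposition \ref{Lagrange} is most naturally justified once $u$ is known to be regular.
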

\begin{proof}
By Lemma \ref{Differentiable}, the constraint $C(u)=\int_{\mathbb{S}^N}\xi e^{Nu}dw$ satisfies
\begin{equation*}
\delta C(u)[h]=N\int_{\mathbb{S}^N}\xi h e^{Nu}dw.
\end{equation*}
Therefore by \eqref{def:gamma-g-M}, The restriction of $\delta C(u)$ to Span$\{x_1,\cdots, x_{N+1}\}$ is the matrix $N\gamma M$, which is positive definite. Thus, 
\begin{equation*}
\mathcal{L}_{N}=\{u\in H^{\frac{N}{2}}(\mathbb{S}^N):\  C(u)=0\}
\end{equation*}
is a regular codimension-$(N+1)$ constraint in $H^{\frac{N}{2}}(\mathbb{S}^N)$. Hence by Proposition \ref{Lagrange}, we see that $u$ is a weak solution of \eqref{paneitz}.

For every fixed $p>1$, applying Beckner's inequality with $\alpha=1$ to $pu$ and $-pu$, we obtain $e^{N|u|}\in L^p(\mathbb{S}^N)$. Thus, the right-hand side of \eqref{axialLagrange} belongs to $L^p(\mathbb{S}^N)$. Since $J_{\alpha,N}$ and $\mathcal{L}_{N}$ are invariant under addition of constants, we may assume
\begin{equation*}
\int_{\mathbb{S}^N}udw=0,
\end{equation*}
Then elliptic estimates give
\begin{equation}\label{elliptic-Lp}
\|u\|_{W^{N,p}(\mathbb{S}^N)} \leq C_{N,p}\left(\|P_Nu\|_{L^p(\mathbb{S}^N)}+\|u\|_{L^p(\mathbb{S}^N)}\right).
\end{equation}

For odd $N$, \eqref{elliptic-Lp} is the standard estimate for the classical elliptic pseudo-differential operator obtained from the complex-power construction \cite{Seeley1967}; see also \cite{Taylor1991}. For even $N$ it is the usual elliptic estimate. Taking $p>N$, we have $u\in C^{N-1,\epsilon}$ for some $\epsilon\in (0,1)$. A standard bootstrap argument then gives $u\in C^{\infty}(\mathbb{S}^N)$. 
\end{proof}

\subsection{Integral identities}
Hereafter, we use $x_i$ to denote the coordinate function on $\mathbb{S}^N$, that is, $x_i(\xi)=\xi_i$ for $\xi\in\mathbb{S}^N$.  We also write $$X_i=\nabla x_i,\qquad
 i=1,\ldots,N+1.$$
The following identities on the round sphere will be used repeatedly:
\begin{equation}\label{spherical:identities}
\Delta x_i=-Nx_i,
\qquad
\langle X_i,X_j\rangle=\delta_{ij}-x_ix_j,
\qquad
\operatorname{div}(x_jX_i)=\delta_{ij}-(N+1)x_ix_j.
\end{equation} 

\begin{lemma}\label{lem:commutator}
For every smooth function $u$ and every $i=1,\ldots,N+1$,
\begin{equation}\label{commutator}
P_N(X_iu)=X_i(P_Nu)-Nx_iP_Nu.
\end{equation}
\end{lemma}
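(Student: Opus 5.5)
Since both sides of \eqref{commutator} are linear in $u$ and continuous on smooth functions, I will verify the identity on each spherical harmonic $Y_k\in\mathcal{H}_k$ and then conclude by density of finite sums of spherical harmonics (in $C^\infty$, or in any $H^s$). This spectral route avoids any appeal to locality of $P_N$, so odd and even $N$ are handled together. It uses only \eqref{PN:eigenvalue}, written as $P_NY_k=\mu_kY_k$ with $\mu_k=\Gamma(k+N)/\Gamma(k)$ and $\mu_0=0$, together with the identities \eqref{spherical:identities}.

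\emph{Step 1: decompose $x_iY_k$.} It is classical that $x_iY_k=A+B$ with $A\in\mathcal{H}_{k+1}$ and $B\in\mathcal{H}_{k-1}$, where $B=0$ if $k=0$. Put $\lambda_k=k(k+N-1)$, so that $-\Delta Y_k=\lambda_kY_k$.

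\emph{Step 2: compute $X_iY_k$.} Since $X_iY_k=\langle\nabla x_i,\nabla Y_k\rangle$, we have
\[
X_iY_k=\tfrac12\bigl(\Delta(x_iY_k)-x_i\Delta Y_k-Y_k\Delta x_i\bigr)
=-\tfrac12\bigl[(\lambda_{k+1}-\lambda_k-N)A+(\lambda_{k-1}-\lambda_k-N)B\bigr].
\]
Using $\lambda_{k+1}-\lambda_k-N=2k$ and $\lambda_{k-1}-\lambda_k-N=-(2k+2N-2)$, this gives $X_iY_k=-kA+(k+N-1)B$.

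\emph{Step 3: compare both sides.} The left-hand side of \eqref{commutator} is
\[
P_N(X_iY_k)=-k\mu_{k+1}A+(k+N-1)\mu_{k-1}B.
\]
The right-hand side is
\[
X_i(P_NY_k)-Nx_iP_NY_k=\mu_k\bigl(-kA+(k+N-1)B\bigr)-N\mu_k(A+B)=-(k+N)\mu_kA+(k-1)\mu_kB.
\]
The $A$-coefficients agree because $k\mu_{k+1}=(k+N)\mu_k$, which follows from $\Gamma(k+1)=k\Gamma(k)$. For $k\ge2$, the $B$-coefficients agree because $(k-1)\mu_k=(k+N-1)\mu_{k-1}$. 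For $k=1$, $B$ is a constant, so $P_NB=0$, and the coefficient $(k-1)\mu_k$ also vanishes. For $k=0$, $Y_0$ is constant and both sides are zero.

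The only step needing care is the density and continuity justification for general smooth $u$, including the convergence of the expansion of $X_iu$. This is routine, because both sides of \eqref{commutator} are differential or pseudo-differential operators of order $N$ that are bounded $H^{s+N}\to H^{s}$.

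As a sanity check, the identity is also the infinitesimal form of conformal covariance. Since $\nabla^2x_i=-x_ig$, the vector field $X_i$ is conformal Killing, and its flow $\varphi_t$ satisfies $\varphi_t^*g=e^{2\omega_t}g$ with $\dot\omega_0=-x_i$. Combining naturality of $P_N$ with \eqref{conformal PN g} gives
\[
e^{-N\omega_t}P_N(u\circ\varphi_t)=(P_Nu)\circ\varphi_t .
\]
Differentiating at $t=0$ yields exactly \eqref{commutator}. The spectral computation above is preferable, however, since it needs no justification of the covariance law for the nonlocal operator when $N$ is odd.
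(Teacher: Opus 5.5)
Your proof is correct, and it takes a different route from the paper's.

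\textbf{The paper's route.} The paper argues exactly as in your closing sanity check. It writes down the flow $\Phi_s$ of $X_i$ explicitly and computes $\Phi_s^*g=e^{2\omega_s}g$ with $\omega_s=-\log(\cosh s+x_i\sinh s)$. It then combines naturality with the covariance law \eqref{conformal PN g} to get $P_N(u\circ\Phi_s)=e^{N\omega_s}(P_Nu)\circ\Phi_s$, and differentiates at $s=0$.

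\textbf{Your route.} You instead verify the identity on each $\mathcal{H}_k$, using only \eqref{PN:eigenvalue} and the three-term structure $x_i\mathcal{H}_k\subset\mathcal{H}_{k+1}\oplus\mathcal{H}_{k-1}$. Your formula $X_iY_k=-kA+(k+N-1)B$ is right, as are the two Gamma-function identities and the treatment of the edge cases $k=0,1$.

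\textbf{What each approach buys.}
\begin{itemize}
\item Your argument is self-contained. For odd $N$ the operator is defined spectrally, and your proof never needs the covariance law or naturality for the nonlocal operator, which the paper takes as given. Nor does it need to justify differentiating $P_N(u\circ\Phi_s)$ in $s$.
\item It also shows that the identity is equivalent to the recursion $k\mu_{k+1}=(k+N)\mu_k$ with $\mu_0=0$. This recursion pins down the multiplier sequence up to a constant, so it makes transparent why these particular eigenvalues are the conformally covariant ones.
\item The paper's argument is more conceptual. It applies verbatim to any conformally covariant operator without knowing its spectrum.
\end{itemize}

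\textbf{A small slip.} As operators on $u$, both sides of \eqref{commutator} have order $N+1$, not $N$. The continuity you want is therefore $H^{s+N+1}\to H^{s}$. This does not affect the density argument.
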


\begin{proof}
 Let $e_i$ denote the $i$-th coordinate	vector of $\mathbb R^{N+1}$. Then we have
	\begin{equation}\label{formula:Xi-pointwise}
		X_i(\xi)=\nabla_{g_{\mathbb{S}^N}}x_i(\xi)=e_i-x_i(\xi)\xi.
	\end{equation}

To describe the flow generated by $X_i$, for a fixed $\xi\in\mathbb S^N$, we introduce
	\begin{equation*}
		a:=x_i(\xi),
		\qquad
		\xi^\perp:=\xi-ae_i,
		\qquad
		D_s(\xi):=\cosh s+a\sinh s.
	\end{equation*}
	Then $D_s(\xi)>\cosh s-|\sinh s|=e^{-|s|}>0$ since $|a|<1$. Thus we can define
	\begin{equation}\label{formula:conformal-flow}
		\Phi_s(\xi)
		:=
		\frac{\xi^\perp+(a\cosh s+\sinh s)e_i}
		{D_s(\xi)},
	\end{equation}
and a direct computation shows that $\Phi_s(\xi)\in\mathbb S^N$.

	Clearly $\Phi_0(\xi)=\xi$. Its $i$-th coordinate is
	\begin{equation}\label{formula:flow-coordinate}
		x_i(\Phi_s(\xi))
		=
		\frac{a\cosh s+\sinh s}{D_s(\xi)}.
	\end{equation}
	Differentiating \eqref{formula:conformal-flow} gives
	\begin{align*}
		\frac{d}{ds}\Phi_s(\xi)
		&=e_i-\frac{a\cosh s+\sinh s}{D_s(\xi)}\Phi_s(\xi)\\
		&=e_i-x_i(\Phi_s(\xi))\Phi_s(\xi)\\
		&=X_i(\Phi_s(\xi)).
	\end{align*}
Thus $\Phi_s$ is precisely the flow generated by $X_i$:
	\begin{equation*}
		\frac{d}{ds}\Phi_s=X_i\circ\Phi_s,
		\qquad
		\Phi_0=\operatorname{id}_{\mathbb S^N}.
	\end{equation*}
    
We next compute the conformal factor of $\Phi_s$. Since $\operatorname{Hess}x_i=-x_i g_{\mathbb S^N}$, the Lie derivative of the metric $g_{\mathbb S^N}$ in the direction of the vector field $X_i$ is
\begin{equation*}
\mathcal L_{X_i}g_{\mathbb S^N}=2\operatorname{Hess}x_i=-2x_i g_{\mathbb S^N}.
\end{equation*}

On the other hand, we set	$g_s:=\Phi_s^*g_{\mathbb{S}^N}$. The standard differentiation formula for pullbacks along a flow gives
	\begin{equation}\label{ode:pullback-metric}
		\frac{d}{ds}g_s=
		\Phi_s^*(\mathcal L_{X_i}g_{\mathbb{S}^N})=
		-2(x_i\circ\Phi_s)g_s.
	\end{equation}
Since
	\begin{equation*}
		\frac{d}{ds}\log D_s(\xi)=
		\frac{a\cosh s+\sinh s}{D_s(\xi)}=
		x_i(\Phi_s(\xi)).
	\end{equation*}
	Solving \eqref{ode:pullback-metric} with the initial condition $g_0=g_{\mathbb{S}^N}$ therefore gives
	\begin{equation}\label{formula:flow-conformal-factor}
		\Phi_s^*g_{\mathbb{S}^N}=e^{2\omega_s}g_{\mathbb{S}^N},
	\end{equation}
	where $\omega_s(\xi):=-\log D_s(\xi)$.	In particular,
	\begin{equation}\label{formula:omega-derivative}
	\dot\omega_0(\xi)
	=-\left.\frac{d}{ds}\log D_s(\xi)	\right|_{s=0}=-x_i(\xi).
	\end{equation}
    
    Naturality under the diffeomorphism $\Phi_s$ and the conformal covariance \eqref{conformal PN g} give
\begin{equation*}
P_N(u\circ\Phi_s)
=e^{N\omega_s}(P_Nu)\circ\Phi_s=D_s^{-N}(P_N u)\circ \Phi_s.
\end{equation*}

Now we differentiate the above formula at $s=0$. Since
	\begin{equation*}
		\left.	\frac{d}{ds}(u\circ\Phi_s)
		\right|_{s=0}=X_iu
	\end{equation*}
	and
	\begin{equation*}
		\left.\frac{d}{ds}
		\bigl((P_Nu)\circ\Phi_s\bigr)
		\right|_{s=0}=X_i(P_Nu),
	\end{equation*}
	we obtain
	\begin{align*}
		P_N(X_iu)=N\dot\omega_0P_Nu+X_i(P_Nu)=X_i(P_Nu)-Nx_iP_Nu,
	\end{align*}
	where we used $\dot\omega_0=-x_i$ from	\eqref{formula:omega-derivative}. This proves \eqref{commutator}.
\end{proof}

Write $G_i:=X_iu$. Then since $X_ig=NgG_i$, Lemma \ref{lem:commutator} gives
\begin{equation}\label{equation:Gi}
\alpha P_NG_i
=N!\bigl(gG_i+x_i(1-g)\bigr).
\end{equation}
We have the following identities for $G_i$.
\begin{lemma}\label{lem:moment-identities}
Let $u$ be a smooth solution of \eqref{paneitz}. Then
\begin{equation}\label{Gi:mean}
\int_{\mathbb S^N}G_i\,d\mu=0,
\end{equation}
\begin{equation}\label{xjGi:dw}
\int_{\mathbb S^N}x_jG_i\ud w
=\frac1{\alpha N}
\left(M_{ij}-\frac{\delta_{ij}}{N+1}\right),
\end{equation}
and
\begin{equation}\label{xjGi:dmu}
\int_{\mathbb S^N}x_jG_i\,d\mu
=\frac{N+1}{N}
\left(M_{ij}-\frac{\delta_{ij}}{N+1}\right).
\end{equation}
\end{lemma}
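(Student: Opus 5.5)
The three identities follow by integrating by parts on $\mathbb S^N$ with the vector fields $X_i$ and then testing equation \eqref{equation:Gi} against the first spherical harmonics $x_j$. The basic tool is that, for smooth $f$ and $\varphi$,
\begin{equation*}
\int_{\mathbb S^N}\varphi\,X_i f\ud w=-\int_{\mathbb S^N}f\,\operatorname{div}(\varphi X_i)\ud w .
\end{equation*}
Combined with \eqref{spherical:identities}, this gives $\operatorname{div}X_i=\Delta x_i=-Nx_i$ and $\operatorname{div}(x_jX_i)=\delta_{ij}-(N+1)x_ix_j$. The other observation I use throughout is $G_i\,e^{Nu}=\frac1N X_i(e^{Nu})$, so that
\begin{equation*}
G_i\,d\mu=\frac{1}{N\gamma}X_i(e^{Nu})\ud w .
\end{equation*}
Since $u$ is smooth, all integrations by parts are legitimate.

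\emph{Step 1: proof of \eqref{Gi:mean}.} Integrating by parts with $\varphi=1$ gives
\begin{equation*}
\int_{\mathbb S^N}G_i\,d\mu=\frac{1}{N\gamma}\int_{\mathbb S^N}X_i(e^{Nu})\ud w=\frac{1}{\gamma}\int_{\mathbb S^N}x_ie^{Nu}\ud w=\int_{\mathbb S^N}x_i\,d\mu .
\end{equation*}
The right-hand side vanishes by the center-of-mass condition $u\in\mathcal L_N$.

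\emph{Step 2: proof of \eqref{xjGi:dmu}.} Integrating by parts with $\varphi=x_j$ gives
\begin{equation*}
\int_{\mathbb S^N}x_jG_i\,d\mu=-\frac{1}{N\gamma}\int_{\mathbb S^N}e^{Nu}\bigl(\delta_{ij}-(N+1)x_ix_j\bigr)\ud w=\frac{1}{N}\bigl((N+1)M_{ij}-\delta_{ij}\bigr).
\end{equation*}
Here I used $\int g\ud w=1$. This is exactly \eqref{xjGi:dmu}.

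\emph{Step 3: proof of \eqref{xjGi:dw}.} Computing $\int x_jG_i\ud w$ directly by parts would leave an integral of $u$ itself, which we cannot evaluate. So instead I multiply \eqref{equation:Gi} by $x_j$ and integrate. Since $P_N$ is self-adjoint and, by \eqref{PN:eigenvalue}, $P_Nx_j=\frac{\Gamma(N+1)}{\Gamma(1)}x_j=N!\,x_j$, the left-hand side becomes $\alpha N!\int x_jG_i\ud w$. On the right-hand side I use $\int x_ix_j\ud w=\frac{\delta_{ij}}{N+1}$ and $\int x_ix_jg\ud w=M_{ij}$. Dividing by $N!$, this yields
\begin{equation*}
\alpha\int_{\mathbb S^N}x_jG_i\ud w=\int_{\mathbb S^N}x_jG_i\,d\mu-\Bigl(M_{ij}-\frac{\delta_{ij}}{N+1}\Bigr).
\end{equation*}
Substituting Step 2 turns the right-hand side into $\bigl(\frac{N+1}{N}-1\bigr)\bigl(M_{ij}-\frac{\delta_{ij}}{N+1}\bigr)$, which gives \eqref{xjGi:dw}.

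The only point needing care is Step 3. There one must recognize that testing against $x_j$, the first eigenfunctions of $P_N$, eliminates the nonlocal operator. This trades the unknown $\int x_jG_i\ud w$ for the explicitly computable $\mu$-moment from Step 2. Everything else is routine integration by parts.
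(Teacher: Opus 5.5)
Your proof is correct. Steps 1 and 2 are the paper's argument; the difference is in Step 3, the proof of \eqref{xjGi:dw}.

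The paper does integrate by parts directly, obtaining
\[
\int_{\mathbb S^N}x_jG_i\ud w=(N+1)\int_{\mathbb S^N}u\,q_{ij}\ud w,\qquad q_{ij}=x_ix_j-\frac{\delta_{ij}}{N+1}.
\]
It then evaluates this by testing the original equation \eqref{paneitz} against the degree-two harmonic $q_{ij}$, using $P_Nq_{ij}=(N+1)!\,q_{ij}$ and $\int_{\mathbb S^N}q_{ij}\ud w=0$. So your remark that the leftover integral of $u$ ``cannot be evaluated'' is not right. It is the projection of $u$ onto one spherical harmonic, and the equation determines it exactly.

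The trade-off is as follows.
\begin{itemize}
\item The paper's route uses only \eqref{paneitz} and the spectrum of $P_N$. It keeps the three identities logically independent and does not need Lemma \ref{lem:commutator} at all.
\item Your route needs \eqref{equation:Gi}, and hence the conformal covariance behind Lemma \ref{lem:commutator}. It also makes \eqref{xjGi:dw} depend on \eqref{xjGi:dmu}. In exchange, it only uses the first eigenvalue $P_Nx_j=N!\,x_j$.
\end{itemize}
Since \eqref{equation:Gi} is established before this lemma, using it is legitimate. The agreement of the two computations is also a useful consistency check on \eqref{equation:Gi}.
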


\begin{proof}
Since $X_ig=NgG_i$, integration by parts and the center condition give
\begin{equation*}
\int_{\mathbb S^N}G_i\,d\mu
=\frac1N\int_{\mathbb S^N}X_ig\ud w
=-\frac1N\int_{\mathbb S^N}g\operatorname{div}X_i\ud w
=\int_{\mathbb S^N}x_i g\ud w=0.
\end{equation*}

Set
\begin{equation*}
q_{ij}:=x_ix_j-\frac{\delta_{ij}}{N+1}.
\end{equation*}
Then $q_{ij}$ is a spherical harmonic of degree two, and
\begin{equation*}
P_Nq_{ij}=(N+1)!q_{ij}.
\end{equation*}
By \eqref{spherical:identities},
\begin{equation*}
\int_{\mathbb S^N}x_jG_i\ud w
=-\int_{\mathbb S^N}u\operatorname{div}(x_jX_i)\ud w
=(N+1)\int_{\mathbb S^N}uq_{ij}\ud w.
\end{equation*}
Testing \eqref{paneitz} against $q_{ij}$ yields
\begin{equation*}
\alpha(N+1)!\int_{\mathbb S^N}uq_{ij}\ud w
=(N-1)!\left(M_{ij}-\frac{\delta_{ij}}{N+1}\right),
\end{equation*}
which proves \eqref{xjGi:dw}.

Finally,
\begin{equation*}
\begin{aligned}
\int_{\mathbb S^N}x_jG_i\,d\mu
&=\frac1N\int_{\mathbb S^N}x_jX_ig\ud w\\
&=-\frac1N\int_{\mathbb S^N}g\operatorname{div}(x_jX_i)\ud w,
\end{aligned}
\end{equation*}
and \eqref{xjGi:dmu} follows from \eqref{spherical:identities}.
\end{proof}

Let $v$ be a unit eigenvector of $M$ with eigenvalue $m$, and define
\begin{equation}\label{def:xXGt}
x=v\cdot\xi,
\qquad
X=\nabla x,
\qquad
G=Xu,
\qquad
t=\frac{(N+1)m-1}{N}.
\end{equation}
Taking the corresponding linear combinations in Lemma
\ref{lem:moment-identities}, we obtain
\begin{equation}\label{basic:G-identities}
\int_{\mathbb S^N}G\,d\mu=0,
\qquad
\int_{\mathbb S^N}xG\,d\mu=t,
\qquad
\int_{\mathbb S^N}xG\ud w=\frac{t}{\alpha(N+1)}.
\end{equation}
Since $0<m<1$, we see that
\begin{equation}\label{range:t}
-\frac1N<t<1.
\end{equation}

\section{Eigenvalue estimates of $M$}\label{weightedl^2est}
In this section, we establish the estimates of the eigenvalues of the matrix $M=(M_{ij})_{1\leq i,j\leq N+1}$ with 
\begin{equation*}
M_{ij}=\int_{\mathbb{S}^N}\xi_i\xi_j d\mu(\xi)   
\end{equation*}
and prove Theorem \ref{eigenest}.

\subsection{Integral representation of $P_N^{-1}$}
For any smooth function $h$ with
\begin{equation}\label{meanzero}
\int_{\mathbb{S}^N}hdw =0,
\end{equation}
define the operator $\mathcal{K}$ by
\begin{equation*}
\mathcal{K}h(\xi):=-\int_{\mathbb{S}^N}\log(1-\xi\cdot \eta )h(\eta)dw(\eta).
\end{equation*}

We need the following Funk--Hecke formula (see \cite[Theorem 2.39]{Mori1998}).
\begin{lemma}\label{KY}
Suppose $K$ is a complex-valued function with
\begin{equation*}
\int_{-1}^{1}|K(t)|(1-t^2)^{\frac{N}{2}-1}dt<\infty.
\end{equation*}
Then for any spherical harmonics $Y_k\in \mathcal{H}_k$, we have
\begin{equation*}
\int_{\mathbb{S}^N}K(\xi\cdot \eta)Y_k(\eta)dw(\eta)=\hat{K}(k)Y_k(\xi),
\end{equation*}
where
\begin{equation*}
\hat{K}(k)=\frac{\Gamma(\frac{N+1}{2})}{\sqrt{\pi}\Gamma(\frac{N}{2})}\int_{-1}^{1}K(t)F_k(t)(1-t^2)^{\frac{N}{2}-1}dt.
\end{equation*}
and $F_k$ is the normalized Gegenbauer polynomials introduced in Appendix \ref{optimal}.
\end{lemma}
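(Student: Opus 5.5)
The approach is the classical one: use rotation invariance to reduce to zonal functions, then compute a one-dimensional integral by slicing the sphere along the axis through $\xi$. Fix $\xi\in\mathbb{S}^N$ and let $H_\xi\cong O(N)$ be the subgroup of $O(N+1)$ fixing $\xi$, with normalized Haar measure $dR$. The map $\eta\mapsto K(\xi\cdot\eta)$ is $H_\xi$-invariant, and so is $dw$. Hence for any $R\in H_\xi$,
\begin{equation*}
\int_{\mathbb{S}^N}K(\xi\cdot\eta)Y_k(\eta)\,dw(\eta)=\int_{\mathbb{S}^N}K(\xi\cdot\eta)Y_k(R\eta)\,dw(\eta).
\end{equation*}
Averaging over $R\in H_\xi$ lets us replace $Y_k$ by its zonal average
\begin{equation*}
\widetilde Y(\eta):=\int_{H_\xi}Y_k(R\eta)\,dR .
\end{equation*}
Fubini is justified because $K(\xi\cdot\,\cdot\,)\in L^1(dw)$, which is exactly the integrability hypothesis after the change of variables below, and $Y_k$ is bounded.

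Next, $\widetilde Y$ lies in $\mathcal{H}_k$, since $\mathcal{H}_k$ is rotation invariant and closed. It is invariant under $H_\xi$, so it is a function of $\xi\cdot\eta$ alone. I will use the standard fact that the $H_\xi$-invariant subspace of $\mathcal{H}_k$ is one-dimensional, spanned by the zonal harmonic $\eta\mapsto F_k(\xi\cdot\eta)$ with $F_k(1)=1$. To prove this, note that a zonal harmonic $f(\xi\cdot\eta)$ of degree $k$ makes $f$ a polynomial solving the Gegenbauer ODE obtained from $\Delta Y=-k(k+N-1)Y$. That ODE has a one-dimensional polynomial solution space. Alternatively, one can use the reproducing kernel $Z_k(\xi,\eta)=\dim\mathcal{H}_k\,F_k(\xi\cdot\eta)$. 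Evaluating at $\eta=\xi$, which $H_\xi$ fixes, gives $\widetilde Y(\xi)=Y_k(\xi)$, and therefore
\begin{equation*}
\widetilde Y(\eta)=Y_k(\xi)\,F_k(\xi\cdot\eta).
\end{equation*}

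This gives
\begin{equation*}
\int_{\mathbb{S}^N}K(\xi\cdot\eta)Y_k(\eta)\,dw(\eta)=Y_k(\xi)\int_{\mathbb{S}^N}K(\xi\cdot\eta)F_k(\xi\cdot\eta)\,dw(\eta).
\end{equation*}
It remains to evaluate the last integral. Write $\eta=t\xi+\sqrt{1-t^2}\,\zeta$ with $t\in(-1,1)$ and $\zeta\in\mathbb{S}^{N-1}\subset\xi^\perp$. The round measure then decomposes as $(1-t^2)^{\frac N2-1}dt\,d\sigma_{N-1}(\zeta)$. After normalizing so that the total mass is one, this becomes
\begin{equation*}
dw=c_N(1-t^2)^{\frac N2-1}\,dt\,d\bar\sigma(\zeta),\qquad c_N^{-1}=\int_{-1}^1(1-t^2)^{\frac N2-1}dt=B\!\left(\tfrac12,\tfrac N2\right)=\frac{\sqrt\pi\,\Gamma(\frac N2)}{\Gamma(\frac{N+1}{2})}.
\end{equation*}
Integrating out $\zeta$ yields exactly $\hat K(k)$ with the stated constant. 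The same change of variables shows that the hypothesis on $K$ is equivalent to $K(\xi\cdot\,\cdot\,)\in L^1(dw)$, which justifies all the interchanges above.

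The only step with real content is the uniqueness of the zonal harmonic of degree $k$. I would cite it as standard (it is in \cite{Mori1998}), or prove it quickly via the reproducing kernel: any $H_\xi$-invariant $Y\in\mathcal{H}_k$ satisfies $Y(\eta)=\int Z_k(\eta,\zeta)Y(\zeta)\,dw(\zeta)$, and averaging this over $H_\xi$ forces $Y$ to be proportional to $Z_k(\xi,\cdot)$. The normalizing constant computation is routine.
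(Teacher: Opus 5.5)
Your proof is correct. The paper does not prove this lemma at all; it quotes it from \cite[Theorem 2.39]{Mori1998}. What you have written is the standard textbook proof behind that citation: average over the stabilizer $H_\xi$, identify the averaged harmonic with $Y_k(\xi)F_k(\xi\cdot\eta)$, then slice $dw$ along $\xi$. Your constant checks out, $c_N^{-1}=B(\frac12,\frac N2)=\sqrt\pi\,\Gamma(\frac N2)/\Gamma(\frac{N+1}{2})$. The same slicing shows that the hypothesis on $K$ is exactly $K(\xi\cdot\,\cdot\,)\in L^1(dw)$, which is all the Fubini steps need. Compared with the citation, your version makes the lemma self-contained. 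It rests on one genuine input: the $H_\xi$-invariant part of $\mathcal H_k$ is one-dimensional.

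For that input, only the ODE route you mention works as sketched. The reproducing-kernel sketch is circular. If $Y\in\mathcal H_k$ is already $H_\xi$-invariant, averaging $Y(\eta)=\int Z_k(\eta,\zeta)Y(\zeta)\,dw(\zeta)$ over $H_\xi$ only rewrites $Y(\eta)$ as a pairing with another $H_\xi$-invariant element of $\mathcal H_k$. Nothing there forces proportionality to $Z_k(\xi,\cdot)$ unless you already know the invariant subspace is a line.

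The ODE route needs two more lines to be complete.
\begin{itemize}
\item Take $\xi=e_{N+1}$ and write $z=(z',z_{N+1})$. An $O(N)$-invariant homogeneous polynomial of degree $k$ has the form $\sum_j c_j z_{N+1}^{k-2j}|z'|^{2j}$, so $f(t)=\sum_j c_j t^{k-2j}(1-t^2)^j$ is a polynomial.
\item Insert $f=\sum_j a_jt^j$ into $(1-t^2)f''-Ntf'+k(k+N-1)f=0$. This gives the recursion $(j+1)(j+2)a_{j+2}=[j(j+N-1)-k(k+N-1)]a_j$. The recursion forces $\deg f=k$. It also rules out any nonzero coefficient of the opposite parity, since that chain would never terminate. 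Finally, it determines $f$ from $a_k$.
\end{itemize}
Alternatively, the Wronskian is $C(1-t^2)^{-N/2}$, so two independent solutions cannot both be $C^1$ up to $t=1$.

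You are also implicitly using that the paper's $F_k$, defined via Rodrigues' formula \eqref{Rodrigues}, is this normalized polynomial solution, i.e.\ $F_k=C_k^{(\nu)}/C_k^{(\nu)}(1)$ with $\nu=\frac{N-1}{2}$. That identification is standard, and the paper relies on it as well.
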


By the Funk--Hecke formula above, we have the following characterization of $P_N^{-1}$.
\begin{lemma}\label{KH}
For every smooth function $h$ satisfying \eqref{meanzero}, we have 
\begin{equation}\label{PNKh}
P_N \mathcal{K}h=(N-1)!h.
\end{equation}
Equivalently, on $\mathcal{H}_k$ with $k\geq 1$, the operator $\mathcal{K}$ has eigenvalue
\begin{equation*}
\kappa_k=(N-1)!\frac{\Gamma(k)}{\Gamma(k+N)}.
\end{equation*}
\end{lemma}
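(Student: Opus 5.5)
The plan is to compute the Funk--Hecke multiplier of the kernel $K(t)=-\log(1-t)$ explicitly and compare it with the eigenvalues \eqref{PN:eigenvalue} of $P_N$. First, $K$ satisfies the integrability hypothesis of Lemma \ref{KY}: the only singularity is logarithmic at $t=1$, and $(1-t^2)^{\frac N2-1}$ is integrable there. Hence for every $Y_k\in\mathcal H_k$ we have $\mathcal K Y_k=\hat K(k)Y_k$.

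The identity \eqref{PNKh} then reduces to showing
\begin{equation*}
\hat K(k)=(N-1)!\frac{\Gamma(k)}{\Gamma(k+N)},\qquad k\geq 1.
\end{equation*}
Indeed, expand a smooth mean-zero $h$ as $h=\sum_{k\geq1}Y_k$; the $k=0$ term is absent because of \eqref{meanzero}. The expansion converges rapidly, as do its images under $\mathcal K$ and $P_N$. Then \eqref{PN:eigenvalue} gives $P_N\mathcal Kh=\sum_{k\ge1}\frac{\Gamma(k+N)}{\Gamma(k)}\hat K(k)Y_k=(N-1)!\,h$.

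To compute $\hat K(k)$, I would use the Rodrigues formula for the normalized Gegenbauer polynomials:
\begin{equation*}
F_k(t)(1-t^2)^{\frac N2-1}=\frac{(-1)^k\Gamma(\frac N2)}{2^k\Gamma(k+\frac N2)}\frac{d^k}{dt^k}(1-t^2)^{k+\frac N2-1}.
\end{equation*}
Then I integrate by parts $k$ times. Since $\frac{d^k}{dt^k}\bigl(-\log(1-t)\bigr)=(k-1)!(1-t)^{-k}$, the two factors $(-1)^k$ cancel, and we get
\begin{equation*}
\hat K(k)=\frac{\Gamma(\frac{N+1}{2})}{\sqrt\pi\,\Gamma(\frac N2)}\cdot\frac{\Gamma(\frac N2)(k-1)!}{2^k\Gamma(k+\frac N2)}\int_{-1}^1(1-t)^{\frac N2-1}(1+t)^{k+\frac N2-1}\,dt.
\end{equation*}
The remaining integral is a Beta integral, equal to $2^{k+N-1}\frac{\Gamma(\frac N2)\Gamma(k+\frac N2)}{\Gamma(k+N)}$. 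After cancellation,
\begin{equation*}
\hat K(k)=\frac{2^{N-1}\Gamma(\frac N2)\Gamma(\frac{N+1}2)}{\sqrt\pi}\cdot\frac{\Gamma(k)}{\Gamma(k+N)}.
\end{equation*}
Legendre's duplication formula $\Gamma(\frac N2)\Gamma(\frac{N+1}2)=2^{1-N}\sqrt\pi\,\Gamma(N)$ turns the prefactor into $(N-1)!$, as required.

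The main point needing care is the vanishing of the boundary terms in the $k$ integrations by parts. At the $j$-th step the boundary term is a product of two factors. The first is $\frac{d^{j-1}}{dt^{j-1}}K$, which is $\sim(1-t)^{-(j-1)}$ (or $\log$ when $j=1$). The second is $\frac{d^{k-j}}{dt^{k-j}}(1-t^2)^{k+\frac N2-1}$, which vanishes at $t=\pm1$ to order $j+\frac N2-1$. Their product is therefore $O\bigl((1-t)^{\frac N2}\bigr)$, or $O\bigl((1-t)^{\frac N2}\log(1-t)\bigr)$ when $j=1$, which tends to $0$ since $N\ge 3$. At $t=-1$ the kernel is smooth, so those terms vanish trivially.

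If one prefers to avoid these boundary computations, an alternative is to replace $K$ by $-\log(1+\epsilon-t)$, integrate by parts freely since this kernel is smooth on $[-1,1]$, and let $\epsilon\to0$ by dominated convergence.
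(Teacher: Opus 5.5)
Your proposal is correct and follows essentially the same route as the paper: Funk--Hecke applied to $K(t)=-\log(1-t)$, Rodrigues' formula with $k$ integrations by parts, the Beta integral and Legendre duplication to obtain $\kappa_k=(N-1)!\,\Gamma(k)/\Gamma(k+N)$, and then a spherical-harmonic expansion of the smooth mean-zero $h$. Your explicit check that the $j$-th boundary term is $O\bigl((1-t)^{N/2}\bigr)$ (with an extra logarithm when $j=1$) fills in the step that the paper only states as ``all boundary terms vanish.''
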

\begin{proof}
We first check \eqref{PNKh} for each $Y_k\in \mathcal{H}_k$ with $k\geq 1$.

Applying Lemma \ref{KY} to $K(t):=-\log(1-t)$, we have for every $k\geq 1$ and $Y_k\in \mathcal{H}_k$,
\begin{equation*}
\mathcal{K}Y_k=\kappa_k Y_k
\end{equation*}
with
\begin{equation*}
\kappa_k=-\frac{\Gamma(\frac{N+1}{2})}{\sqrt{\pi}\Gamma(\frac{N}{2})}\int_{-1}^{1}\log (1-z)F_k(z)(1-z^2)^{\frac{N}{2}-1}dz
\end{equation*}

By \eqref{Rodrigues}, for $k\geq 1$, integrating by parts $k$ times gives
\begin{equation*}
\begin{aligned}
\kappa_k&=\frac{\Gamma(\frac{N+1}{2})(k-1)!}{2^k\sqrt{\pi}\Gamma(k+\frac{N}{2})}\int_{-1}^{1}(1-z)^{\frac{N}{2}-1}(1+z)^{k+\frac{N}{2}-1}dz\\
&=(N-1)!\frac{\Gamma(k)}{\Gamma(k+N)}.
\end{aligned}
\end{equation*}
Here all boundary terms vanish, and the last equality follows from the duplication formula for the Gamma function and Beta function.

Write $h=\sum_{k=1}^{\infty}h_k$ with $h_k\in\mathcal H_k$. For finite sums $h^{(L)}=\sum_{k=1}^{L}h_k$, computation above gives
\begin{equation*}
\mathcal{K}h^{(L)}=\sum_{k=1}^{L}
(N-1)!\frac{\Gamma(k)}{\Gamma(k+N)}h_k
\end{equation*}
and hence
\begin{equation*}
P_N\mathcal Kh^{(L)}=(N-1)!h^{(L)}.
\end{equation*}

Since $h$ is smooth, we can pass $L\to \infty$ and get the desired result.

\end{proof}

Let $v$ be any unit eigenvector of $M$ with eigenvalue $m$. For any $\xi,\eta\in \mathbb{S}^N$, write $x=v\cdot \xi$, $y=v\cdot \eta$ and $s=\xi\cdot \eta$. Define the kernel
\begin{equation}
D_v(\xi,\eta):=
\begin{cases}
\frac{(x-y)^2}{1-s},
  &\qquad \xi\neq \eta,\\
0,&\qquad \xi=\eta.
\end{cases}
\end{equation}
Since
\begin{equation*}
(x-y)^2\leq |\xi-\eta|^2=2(1-s),
\end{equation*}
we have $0\leq D_v\leq 2$. 

We have the following identity.
\begin{theorem}\label{iintDVidentity}
Let $u$ be a smooth solution of \eqref{paneitz} and let $t$ be defined in \eqref{def:xXGt}. Then we have
\begin{equation}
\iint_{\mathbb{S}^N\times \mathbb{S}^N}D_v(\xi,\eta)d\mu(\xi)d\mu(\eta)=2(m-\alpha t).
\end{equation}
\end{theorem}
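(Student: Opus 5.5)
The plan is to combine the integral representation of $P_N^{-1}$ from Lemma \ref{KH} with a direct differentiation of the logarithmic kernel along the conformal vector field $X=\nabla x$, and then to test against $x\,d\mu$. The identity $\int xG\,d\mu=t$ from \eqref{basic:G-identities} will supply the term $\alpha t$.

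\textbf{Step 1: represent $u$ through $\mathcal K$.} By \eqref{paneitz} and \eqref{def:gamma-g-M}, $\alpha P_N u=(N-1)!(g-1)$, and $g-1$ is smooth with mean zero. Lemma \ref{KH} gives $P_N\bigl(\mathcal K(g-1)\bigr)=(N-1)!(g-1)$. Since $\ker P_N$ consists of the constants, we get $\alpha u=\mathcal K(g-1)+c$. The function $\int_{\mathbb S^N}\log(1-\xi\cdot\eta)\,dw(\eta)$ is independent of $\xi$ by rotation invariance. Hence
\begin{equation*}
\alpha u(\xi)=-\int_{\mathbb S^N}\log(1-\xi\cdot\eta)\,d\mu(\eta)+c'.
\end{equation*}

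\textbf{Step 2: differentiate along $X$.} By \eqref{formula:Xi-pointwise} (in the direction $v$), $X(\xi)=v-x\xi$, so $X_\xi(\xi\cdot\eta)=y-xs$. Therefore
\begin{equation*}
X_\xi\bigl(-\log(1-s)\bigr)=\frac{y-xs}{1-s}=x-\frac{x-y}{1-s}.
\end{equation*}
Using $\int d\mu=1$, this yields
\begin{equation*}
\alpha G(\xi)=x-\int_{\mathbb S^N}\frac{x-y}{1-s}\,d\mu(\eta).
\end{equation*}
Differentiating under the integral sign is the only delicate point, and I expect it to be the main (though mild) obstacle. The differentiated kernel satisfies $|x-y|/(1-s)\le \sqrt2\,|\xi-\eta|/(1-s)=2\sqrt2/|\xi-\eta|$. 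This is integrable on $\mathbb S^N$ uniformly in $\xi$ since $N\ge 3$, and $g$ is smooth. I would justify the step by truncating the kernel to $-\log\max(1-s,\varepsilon)$ and letting $\varepsilon\to0$ with dominated convergence, or equivalently by noting that the log potential of a smooth density is $C^1$ with derivative given by the differentiated kernel.

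\textbf{Step 3: test against $x\,d\mu$ and symmetrize.} Multiply by $x$ and integrate in $d\mu(\xi)$. Since $v$ is an eigenvector of $M$, $\int x^2\,d\mu=v^TMv=m$, and by \eqref{basic:G-identities} $\alpha\int xG\,d\mu=\alpha t$. This gives
\begin{equation*}
\alpha t=m-\iint\frac{x(x-y)}{1-s}\,d\mu(\xi)\,d\mu(\eta).
\end{equation*}
The double integral converges absolutely because $0\le D_v\le 2$ and $|x(x-y)/(1-s)|\lesssim|\xi-\eta|^{-1}$. Interchanging $\xi$ and $\eta$ (note $s$ is symmetric) and averaging the two expressions turns $x(x-y)$ into $\tfrac12(x-y)^2$, so the double integral equals $\tfrac12\iint D_v\,d\mu\,d\mu$. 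The diagonal $\xi=\eta$ has $\mu\otimes\mu$-measure zero, so the convention $D_v=0$ there is irrelevant. Rearranging gives $\iint D_v\,d\mu\,d\mu=2(m-\alpha t)$, which is the claim.
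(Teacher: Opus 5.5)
Your proof is correct, but it takes a more direct route than the paper.

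\textbf{The paper's route.} The paper evaluates $\int_{\mathbb S^N} h\,x\,X(\mathcal K h)\,dw$ with $h=g-1$ in two ways. The first uses $X\mathcal K h=\alpha G$, and this needs both $\int xG\,d\mu=t$ and $\int xG\,dw=\frac{t}{\alpha(N+1)}$; the latter comes from testing \eqref{paneitz} against the degree-two harmonics $q_{ij}$. The second uses the differentiated kernel and symmetrizes to $-\frac12\iint h(\xi)h(\eta)D_v\,dw\,dw$. Expanding $h\otimes h$ then forces the paper to compute the Lebesgue-measure pieces $\int D_v(\xi,\eta)\,dw(\eta)=\frac1N+\frac{N-1}{N}x^2$ and $\iint D_v\,dw\,dw=\frac{2}{N+1}$. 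These extra terms cancel only when the two evaluations are compared.

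\textbf{Your route.} You observe that $\int\log(1-\xi\cdot\eta)\,dw(\eta)$ is independent of $\xi$. Hence $\alpha u$ is, up to a constant, the logarithmic potential of $\mu$ itself. You then test $X$ of that potential against $x\,d\mu$ rather than against $x\,h\,dw$. All $dw$-contributions vanish before they need to be computed. You use only $\int xG\,d\mu=t$, which is a pure integration by parts, and $\int x^2\,d\mu=m$. The symmetrization step is the same as in the paper, except that the term $\frac12\iint (x^2+y^2)\,d\mu\,d\mu=m$ is kept rather than killed by $\int h\,dw=0$.

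\textbf{Regularity step.} Your truncation $-\log\max(1-s,\varepsilon)$ works. The error in the differentiated kernel is bounded by $\|g\|_\infty\int_{|\xi-\eta|\lesssim\sqrt{\varepsilon}}|\xi-\eta|^{-1}\,dw(\eta)$, which tends to $0$ uniformly in $\xi$. This gives the differentiated formula pointwise, slightly more cleanly than the paper's $L^1$/distributional identification.

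\textbf{Comparison.} The paper's formulation stays entirely within the mean-zero setting of Lemma \ref{KH} and works with a symmetric quadratic form in $h$. The price is the extra bookkeeping with $\int D_v\,dw$, which is used nowhere else in the paper. Your version is shorter and needs fewer ingredients.
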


Recall $X=\nabla x$ from \eqref{def:xXGt}. To prove Theorem \ref{iintDVidentity}, we will evaluate
\begin{equation}\label{hxXKh}
\int_{\mathbb{S}^N}hxX(\mathcal{K}h) dw
\end{equation}
in two different ways.

The following lemma gives the first evaluation.

\begin{lemma}\label{lem hxXKh1}
Under the same assumption as in Theorem \ref{iintDVidentity}, we have
\begin{equation}
\int_{\mathbb{S}^N}hxX(\mathcal{K}h) dw=\alpha t-\frac{t}{N+1}.
\end{equation}
\end{lemma}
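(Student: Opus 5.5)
The plan is to take $h:=g-1$ in \eqref{hxXKh}. This $h$ is smooth, and it has mean zero because $\int_{\mathbb S^N}g\,dw=1$. The key point is that $\mathcal K h$ is then an explicit multiple of $u$, up to an additive constant, so $X(\mathcal Kh)$ is a multiple of $G=Xu$. The integral then reduces to the moment identities \eqref{basic:G-identities}.

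\textbf{Step 1: identify $\mathcal K h$.} Lemma \ref{KH} gives $P_N\mathcal K h=(N-1)!\,h$. Equation \eqref{paneitz}, rewritten with $g=e^{Nu}/\gamma$, reads $\alpha P_N u=(N-1)!(g-1)=(N-1)!\,h$. Hence $P_N(\mathcal Kh-\alpha u)=0$. By \eqref{PN:eigenvalue}, the kernel of $P_N$ consists only of constants, since $\Gamma(k+N)/\Gamma(k)>0$ for $k\ge 1$. Therefore $\mathcal Kh=\alpha u+c$ for some constant $c$. (In fact $c=-\alpha\int u\,dw$, because Funk--Hecke shows $\mathcal K h$ has no $\mathcal H_0$ component, but only differentiability matters here.) Applying $X$ gives
\[
X(\mathcal Kh)=\alpha Xu=\alpha G .
\]
If one prefers to avoid the kernel argument, the same conclusion follows from the spectral expansion: on each $\mathcal H_k$ with $k\ge1$, the eigenvalue $\kappa_k$ of $\mathcal K$ is exactly $(N-1)!$ divided by the eigenvalue of $P_N$. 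Expanding $h=(\alpha/(N-1)!)P_Nu$ therefore gives $\mathcal Kh=\alpha(u-\bar u)$ directly.

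\textbf{Step 2: compute.} Substituting, and recalling $d\mu=g\,dw$,
\[
\int_{\mathbb S^N}h\,x\,X(\mathcal Kh)\,dw=\alpha\int_{\mathbb S^N}(g-1)\,xG\,dw=\alpha\Big(\int_{\mathbb S^N} xG\,d\mu-\int_{\mathbb S^N}xG\,dw\Big).
\]
By \eqref{basic:G-identities}, the first integral equals $t$ and the second equals $t/(\alpha(N+1))$. This gives $\alpha t-\frac{t}{N+1}$, as claimed.

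\textbf{Main obstacle.} There is no real obstacle. The only point needing care is the justification of $\mathcal Kh=\alpha u+\text{const}$, that is, applying Lemma \ref{KH} and then using injectivity of $P_N$ modulo constants. This is legitimate because $u$ is smooth, so $h$ is smooth and every series converges.
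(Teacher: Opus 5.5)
Your proposal is correct and follows essentially the same route as the paper: take $h=g-1$, combine Lemma~\ref{KH} with \eqref{paneitz} to get $\mathcal{K}h=\alpha\bigl(u-\int_{\mathbb{S}^N}u\,dw\bigr)$, hence $X(\mathcal{K}h)=\alpha G$, and conclude with \eqref{basic:G-identities}. You also justify explicitly why $P_N(\mathcal{K}h-\alpha u)=0$ forces the difference to be constant, a step the paper leaves implicit.
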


\begin{proof}
Let $h=g-1$. Then $h$ satisfies \eqref{meanzero}. Lemma \ref{KH} then gives
\begin{equation}\label{PN-1u}
u-\int_{\mathbb{S}^N}u dw=\frac{1}{\alpha}\mathcal{K}h.
\end{equation}

Applying $X=\nabla x$ to both sides and recalling $G=Xu$ from \eqref{def:xXGt}, we obtain
\begin{equation}\label{XKh}
X\mathcal{K}h=\alpha G.
\end{equation}

Hence, by \eqref{basic:G-identities}, we have
\begin{equation}\label{hxXKh1}
\begin{aligned}
\int_{\mathbb{S}^N}hxX(\mathcal{K}h) dw
&=\alpha \int_{\mathbb{S}^N}xGhdw\\
&=\alpha \int_{\mathbb{S}^N}xGd\mu-\alpha \int_{\mathbb{S}^N}xGdw\\
&=\alpha t-\frac{t}{N+1}.
\end{aligned}
\end{equation}
\end{proof}

In the following, we will evaluate \eqref{hxXKh} in a different way. We first establish its integral representation.
\begin{lemma}\label{lem hxXKh2}
In the same assumption as in Theorem \ref{iintDVidentity}, we have
\begin{equation}\label{hxXKh2}
\int_{\mathbb{S}^N}hxX(\mathcal{K}h) dw=-\frac{1}{2}\iint_{\mathbb{S}^N\times \mathbb{S}^N}h(\xi)h(\eta)D_{v}(\xi,\eta)dw(\xi)dw(\eta).
\end{equation}
\end{lemma}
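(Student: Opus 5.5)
We will prove \eqref{hxXKh2} by differentiating under the integral sign and then symmetrizing. The kernel $\log(1-\xi\cdot\eta)$ has only a logarithmic singularity on the diagonal, so its tangential derivative in $\xi$ is $O(|\xi-\eta|^{-1})$. This is integrable on $\mathbb{S}^N$ for $N\geq 2$, so for smooth $h$ with mean zero we may write
\begin{equation*}
X(\mathcal{K}h)(\xi)=-\int_{\mathbb{S}^N}X_\xi\bigl[\log(1-\xi\cdot\eta)\bigr]h(\eta)\,dw(\eta)
=\int_{\mathbb{S}^N}\frac{X(\xi)\cdot\eta}{1-\xi\cdot\eta}\,h(\eta)\,dw(\eta).
\end{equation*}
Since $X(\xi)=v-x\,\xi$ by \eqref{formula:Xi-pointwise} (with $v$ in place of $e_i$), we get $X(\xi)\cdot\eta=y-xs$. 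To justify the interchange, we would either cut off a small neighbourhood $\{|\xi-\eta|<\varepsilon\}$ and let $\varepsilon\to0$, or mollify the kernel to $-\log(1+\varepsilon-s)$ and pass to the limit by dominated convergence.

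Substituting, the left side of \eqref{hxXKh2} becomes
\begin{equation*}
\iint h(\xi)h(\eta)\,\frac{x(y-xs)}{1-s}\,dw(\xi)\,dw(\eta).
\end{equation*}
This double integral is absolutely convergent, since $|y-xs|\leq|\eta-s\xi|$ and $|\eta-s\xi|^2=1-s^2$, so the integrand is $O((1-s)^{-1/2})$ near the diagonal. We may therefore symmetrize in $\xi\leftrightarrow\eta$, replacing the kernel by
\begin{equation*}
\frac{x(y-xs)+y(x-ys)}{2(1-s)}=\frac{2xy-(x^2+y^2)s}{2(1-s)}.
\end{equation*}
The key algebraic identity is
\begin{equation*}
2xy-(x^2+y^2)s=-(x-y)^2+(x^2+y^2)(1-s).
\end{equation*}
With it, the symmetrized kernel equals $-\tfrac12 D_v(\xi,\eta)+\tfrac12(x^2+y^2)$.

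The second term contributes $\tfrac12\iint (x^2+y^2)h(\xi)h(\eta)\,dw\,dw$. Each summand factors as a product with $\int h\,dw$, and this vanishes by \eqref{meanzero}. What remains is exactly $-\tfrac12\iint h(\xi)h(\eta)D_v\,dw\,dw$, and $D_v$ is bounded. The main obstacle is the rigorous justification of differentiating the singular kernel and of the symmetrization, since the unsymmetrized kernel is not bounded. The cleanest route is to run the whole computation with the regularized kernel $-\log(1+\varepsilon-\xi\cdot\eta)$, where every step is classical, and then let $\varepsilon\to0$. In that version the constant term becomes $\tfrac12(x^2+y^2)\frac{1+\varepsilon-s}{1+\varepsilon-s}$ after using $(1+\varepsilon-s)$ in place of $(1-s)$, and the error terms are $O(\varepsilon)$ times integrable kernels. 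The right side converges by dominated convergence since $D_v$ is bounded, and the left side converges because $\mathcal{K}_\varepsilon h\to\mathcal{K}h$ in $C^1$ for smooth $h$.
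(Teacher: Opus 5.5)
Your proposal is correct and is essentially the paper's proof: regularize the kernel to $-\log(1+\varepsilon-\xi\cdot\eta)$, differentiate to get $(y-xs)/(1+\varepsilon-s)$, symmetrize via $2xy-(x^2+y^2)s=(x^2+y^2)(1-s)-(x-y)^2$, let $\varepsilon\to0$ by dominated convergence using $|y-xs|\le\sqrt{1-s^2}$ and $0\le D_v\le 2$, and kill the $(x^2+y^2)$ term with $\int h\,dw=0$. Your side remark that the unregularized double integral is already absolutely convergent, so the symmetrization could be done directly by Fubini once $X\mathcal{K}h$ is identified, only reorders the limits relative to the paper and is not a different argument.
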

\begin{proof}
For any fixed $\eta\in \mathbb{S}^N$, we have
\begin{equation*}
\nabla_{\xi} (\xi\cdot \eta)=\eta-(\xi\cdot \eta)\xi.
\end{equation*}

Since $X(\xi)=v-x\xi$ and $s=\xi\cdot \eta$, we have
\begin{equation*}
X_{\xi}s=(v-x\xi)\cdot \eta=y-xs.
\end{equation*}
Thus, for $\xi\neq \eta$, we have
\begin{equation*}
X_\xi [-\log (1-s)]=\frac{y-xs}{1-s}.
\end{equation*}

For $0<\epsilon<1$, define the operator $\mathcal{K}_{\epsilon}$ by
\begin{equation*}
\mathcal{K}_{\epsilon}h(\xi):=-\int_{\mathbb{S}^N}\log(1-\xi\cdot \eta +\epsilon)h(\eta)dw(\eta).
\end{equation*}
Since $h$ is smooth, $\mathcal{K}_{\epsilon}h$ is also smooth. Then we have
\begin{equation*}
X(\mathcal{K}_{\epsilon}h)(\xi)=\int_{\mathbb{S}^N}\frac{y-xs}{1-s+\epsilon}h(\eta)dw(\eta).   
\end{equation*}

For any $\xi\neq \eta$, we have
\begin{equation*}
|\log(1-\xi\cdot \eta +\epsilon)|\leq C(1+|\log|\xi-\eta||).
\end{equation*}
    
Since $X(\xi)\in T\mathbb{S}^N$ and $|X(\xi)|\leq 1$, we also have
\begin{equation*}
\frac{|y-xs|}{1-s+\epsilon}=\frac{|X_{\xi}s|}{1-s+\epsilon}\leq \frac{|\nabla_{\xi}s|}{1-s}=\sqrt{\frac{1+s}{1-s}}\leq \frac{2}{|\xi-\eta|}.
\end{equation*}
The right-hand sides of the two inequalities above are integrable in $\eta$.

Define 
\begin{equation*}
A_{\epsilon}(\xi,\eta)=
\begin{cases}
\frac{y-xs}{1-s+\epsilon},
&\qquad \xi\neq \eta,\\
0,&\qquad \xi=\eta
\end{cases}
\end{equation*}
and
\begin{equation*}
A(\xi,\eta)=
\begin{cases}
\frac{y-xs}{1-s},
&\qquad \xi\neq \eta,\\
0,&\qquad \xi=\eta.
\end{cases}
\end{equation*}

By the dominated convergence theorem, we have 
\begin{equation}\label{Khconv}
\|\mathcal{K}_{\epsilon}h-\mathcal{K}h\|_{L^1(\mathbb{S}^N)}\leq \iint_{\mathbb{S}^N}|\log(1-\xi\cdot \eta +\epsilon)-\log(1-\xi\cdot \eta)||h(\eta)|dw(\xi)dw(\eta)\to 0
\end{equation}
and
\begin{equation}\label{XKhconv}
\|X(\mathcal{K}_{\epsilon}h)-F\|_{L^1(\mathbb{S}^N)}\leq \iint_{\mathbb{S}^N}|A_{\epsilon}-A||h(\eta)|dw(\xi)dw(\eta)\to 0
\end{equation}
as $\epsilon\to 0^+$, where
\begin{equation*}
F(\xi):=\int_{\mathbb{S}^N}A(\xi,\eta)h(\eta)dw(\eta)
\end{equation*}
is absolutely convergent for almost every $\xi\in \mathbb{S}^N$ and belongs to $L^1(\mathbb{S}^N)$.

Furthermore, by \eqref{Khconv} and \eqref{XKhconv}, we have $F=X(\mathcal{K}h)$ in the sense of distribution. In view of \eqref{XKh}, $\mathcal{K}h$ is smooth and
\begin{equation*}
X(\mathcal{K}h)(\xi)=\int_{\mathbb{S}^N}A(\xi,\eta)h(\eta)dw(\eta)=\alpha G(\xi)
\end{equation*}
for almost every $\xi\in \mathbb{S}^N$.

For fixed $\epsilon>0$, since the kernel is smooth, by Fubini's theorem, we have
\begin{equation*}
\int_{\mathbb{S}^N}hxX(\mathcal{K}_{\epsilon}h)dw=\iint_{\mathbb{S}^N\times \mathbb{S}^N}h(\xi)h(\eta)x\frac{y-xs}{1-s+\epsilon}dw(\xi)dw(\eta).
\end{equation*}

Interchanging $\xi$ and $\eta$ gives
\begin{equation*}
\int_{\mathbb{S}^N}hxX(\mathcal{K}_{\epsilon}h)dw=\iint_{\mathbb{S}^N\times \mathbb{S}^N}h(\xi)h(\eta)y\frac{x-ys}{1-s+\epsilon}dw(\xi)dw(\eta).
\end{equation*}

Averaging the two identities, we have
\begin{equation*}
\begin{aligned}
\int_{\mathbb{S}^N}hxX(\mathcal{K}_{\epsilon}h)dw
&=\frac{1}{2}\iint_{\mathbb{S}^N\times \mathbb{S}^N}h(\xi)h(\eta)\frac{(1-s)(x^2+y^2)}{1-s+\epsilon}dw(\xi)dw(\eta)\\
&-\frac{1}{2}\iint_{\mathbb{S}^N \times\mathbb{S}^N}h(\xi)h(\eta)\frac{(x-y)^2}{1-s+\epsilon}dw(\xi)dw(\eta).
\end{aligned}
\end{equation*}

Since $0\leq \frac{1-s}{1-s+\epsilon}\leq 1$ and $0\leq \frac{(x-y)^2}{1-s+\epsilon}\leq \frac{(x-y)^2}{1-s}\leq 2$, the dominated convergence theorem gives
\begin{equation*}
\begin{aligned}
\int_{\mathbb{S}^N}hxX(\mathcal{K}h)dw
&=\frac{1}{2}\iint_{\mathbb{S}^N\times \mathbb{S}^N}h(\xi)h(\eta)(x^2+y^2)dw(\xi)dw(\eta)\\
&-\frac{1}{2}\iint_{\mathbb{S}^N\times \mathbb{S}^N}h(\xi)h(\eta)D_v(\xi,\eta)dw(\xi)dw(\eta).
\end{aligned}
\end{equation*}
By \eqref{meanzero}, the first integral vanishes and we get the desired result.
\end{proof}

Using the preceding integral representation, we give the second evaluation of \eqref{hxXKh}.
\begin{lemma}
Under the same assumption as in Theorem \ref{iintDVidentity}, we have
\begin{equation*}
\int_{\mathbb{S}^N}hxX(\mathcal{K}h) dw=\frac{1+(N-1)t}{N+1}-\frac{1}{2}\iint_{\mathbb{S}^N\times \mathbb{S}^N}D_vd\mu d\mu.
\end{equation*}
\end{lemma}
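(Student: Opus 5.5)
The plan is to start from the integral representation of Lemma \ref{lem hxXKh2} with $h=g-1$ and expand the product $h(\xi)h(\eta)$. Writing $h(\xi)h(\eta)=g(\xi)g(\eta)-g(\xi)-g(\eta)+1$ and using the symmetry $D_v(\xi,\eta)=D_v(\eta,\xi)$, we get
\begin{equation*}
\iint h(\xi)h(\eta)D_v\,dw\,dw=\iint D_v\,d\mu\,d\mu-2\int_{\mathbb S^N}\Big(\int_{\mathbb S^N}D_v(\xi,\eta)\,dw(\eta)\Big)d\mu(\xi)+\iint D_v\,dw\,dw.
\end{equation*}
All three integrals converge absolutely since $0\le D_v\le 2$. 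So everything reduces to computing the one-variable average $\Psi(\xi):=\int_{\mathbb S^N}D_v(\xi,\eta)\,dw(\eta)$ explicitly.

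\textbf{Computing $\Psi$.} Fix $\xi$ and write $\eta=s\xi+\sqrt{1-s^2}\,\zeta$, with $\zeta$ a unit vector orthogonal to $\xi$. Let $w$ be the component of $v$ orthogonal to $\xi$, so $|w|^2=1-x^2$. Then $y=sx+\sqrt{1-s^2}\,w\cdot\zeta$, and hence
\begin{equation*}
x-y=x(1-s)-\sqrt{1-s^2}\,w\cdot\zeta .
\end{equation*}
Squaring and dividing by $1-s$ gives
\begin{equation*}
\frac{(x-y)^2}{1-s}=x^2(1-s)-2x\sqrt{1-s^2}\,w\cdot\zeta+(1+s)(w\cdot\zeta)^2 .
\end{equation*}
Under $dw(\eta)$, the variable $\zeta$ is uniform on the $(N-1)$-sphere orthogonal to $\xi$, independently of $s$. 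Averaging in $\zeta$ kills the linear term, and $(w\cdot\zeta)^2$ averages to $(1-x^2)/N$. The distribution of $s$ is symmetric, so $\int s\,dw=0$. Therefore
\begin{equation*}
\Psi(\xi)=x^2+\frac{1-x^2}{N}=\frac{1+(N-1)x^2}{N}.
\end{equation*}
This step is the only real computation. The main care needed is the decomposition of $dw$ into the $s$-marginal and the uniform measure on the orthogonal sphere; no singularity issue arises because the integrand is bounded.

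\textbf{Integrating $\Psi$.} Since $v$ is a unit eigenvector of $M$, we have $\int x^2\,d\mu=M v\cdot v=m$. Hence
\begin{equation*}
\int\Psi\,d\mu=\frac{1+(N-1)m}{N}.
\end{equation*}
Using $\int x^2\,dw=\frac1{N+1}$, we get
\begin{equation*}
\iint D_v\,dw\,dw=\frac{1}{N}\Big(1+\frac{N-1}{N+1}\Big)=\frac{2}{N+1}.
\end{equation*}

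\textbf{Assembling.} Substituting into Lemma \ref{lem hxXKh2} gives
\begin{equation*}
\int_{\mathbb S^N}hxX(\mathcal Kh)\,dw=-\frac12\iint D_v\,d\mu\,d\mu+\frac{1+(N-1)m}{N}-\frac{1}{N+1}.
\end{equation*}
Finally, from \eqref{def:xXGt} we have $m=\frac{Nt+1}{N+1}$, so
\begin{equation*}
\frac{1+(N-1)m}{N}=\frac{2+(N-1)t}{N+1}.
\end{equation*}
Subtracting $\frac1{N+1}$ gives exactly $\frac{1+(N-1)t}{N+1}$, which is the claimed identity.
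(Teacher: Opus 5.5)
Your proof is correct and follows the paper's argument. You expand $h(\xi)h(\eta)$ with $h=g-1$ in the representation of Lemma \ref{lem hxXKh2}, compute $\int_{\mathbb{S}^N}D_v(\xi,\eta)\,dw(\eta)=\frac{1}{N}+\frac{N-1}{N}x^2$, and integrate against $d\mu$ and $dw$ using $\int x^2\,d\mu=m$ and $m=\frac{1+Nt}{N+1}$. The only cosmetic difference is in how the inner average is evaluated: you use the decomposition $\eta=s\xi+\sqrt{1-s^2}\,\zeta$, whereas the paper fixes coordinates $\xi=e_{N+1}$, $v=\sqrt{1-x^2}e_1+xe_{N+1}$ and uses $N\int\frac{\eta_1^2}{1-\eta_{N+1}}\,dw=\int(1+\eta_{N+1})\,dw=1$, which is the same computation.
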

\begin{proof}
Fix any $\xi\in \mathbb{S}^N$ and let $x=v\cdot \xi$. By rotational invariance,
we can choose coordinates such that $\xi=e_{N+1}$ and $v=\sqrt{1-x^2}e_1+xe_{N+1}$.

Writing $\eta=(\eta_1,\dots,\eta_{N+1})$, we have $s=\eta_{N+1}$ and $y=\sqrt{1-x^2}\eta_1+x\eta_{N+1}$. Therefore, for $\eta\neq \xi$, we have
\begin{equation*}
D_v(\xi,\eta)=x^2(1-\eta_{N+1})-2x\sqrt{1-x^2}\eta_1+(1-x^2)\frac{\eta_1^2}{1-\eta_{N+1}}.
\end{equation*}

Moreover, by symmetry, we have
\begin{equation*}
N\int_{\mathbb{S}^N}\frac{\eta_1^2}{1-\eta_{N+1}}dw(\eta)=\int_{\mathbb{S}^N}\frac{\sum_{i=1}^{N}\eta_i^2}{1-\eta_{N+1}}dw(\eta)=\int_{\mathbb{S}^N}(1+\eta_{N+1})dw(\eta)=1.
\end{equation*}

Thus, we have
\begin{equation}\label{intDv}
\int_{\mathbb{S}^N}D_v(\xi,\eta)dw(\eta)=x^2+\frac{1-x^2}{N}=\frac{1}{N}+\frac{N-1}{N}x^2
\end{equation}
and
\begin{equation}\label{iintDv}
\iint_{\mathbb{S}^N\times \mathbb{S}^N}D_v(\xi,\eta)dw(\eta)dw(\xi)=\frac{1}{N}+\frac{N-1}{N}\int_{\mathbb{S}^N}x^2dw=\frac{2}{N+1}.
\end{equation}

Since
\begin{equation*}
\int_{\mathbb{S}^N}x^2d\mu=m,
\end{equation*}
by \eqref{hxXKh2}, \eqref{intDv} and \eqref{iintDv}, we have
\begin{equation*}
\begin{aligned}
\int_{\mathbb{S}^N}hxX(\mathcal{K}h) dw
&=
-\frac{1}{2}\iint_{\mathbb{S}^N\times \mathbb{S}^N}h(\xi)h(\eta)D_{v}(\xi,\eta)dw(\xi)dw(\eta)\\
&=\frac{1}{N}+\frac{N-1}{N}m-\frac{1}{N+1}-\frac{1}{2}\iint_{\mathbb{S}^N\times \mathbb{S}^N}D_vd\mu d\mu\\
&=\frac{1+(N-1)t}{N+1}-\frac{1}{2}\iint_{\mathbb{S}^N\times \mathbb{S}^N}D_vd\mu d\mu.    
\end{aligned}
\end{equation*}
\end{proof}

With the two evaluations, we can give a proof of Theorem \ref{iintDVidentity}.
\begin{proof}[Proof of Theorem \ref{iintDVidentity}]

Comparing the two evaluations with \eqref{hxXKh1} and using
\begin{equation*}
m=\frac{1+Nt}{N+1},
\end{equation*}
we get
\begin{equation*}
\iint_{\mathbb{S}^N\times \mathbb{S}^N}D_vd\mu d\mu=2(m-\alpha t)
\end{equation*}
as desired.
\end{proof}

We give the following estimate.
\begin{lemma}\label{Dvlower}
Suppose $v$ is a unit eigenvector of $M$ corresponding to eigenvalue $m$, then
\begin{equation*}
\iint_{\mathbb{S}^N\times \mathbb{S}^N}D_v(\xi,\eta)d\mu(\xi)d\mu(\eta)>\frac{2m}{m+1}.
\end{equation*}
\end{lemma}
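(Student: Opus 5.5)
The plan is to apply Cauchy--Schwarz to the kernel $D_v$, against the weight $(x-y)^2(1-s)$, with respect to the product measure $\mu\otimes\mu$. Away from the diagonal we can write
\begin{equation*}
(x-y)^2=\sqrt{\frac{(x-y)^2}{1-s}}\cdot\sqrt{(x-y)^2(1-s)}.
\end{equation*}
The diagonal $\{\xi=\eta\}$ is $\mu\otimes\mu$-null because $\mu$ has a smooth density. Hence
\begin{equation*}
\Big(\iint (x-y)^2\,d\mu\,d\mu\Big)^2\leq \iint D_v\,d\mu\,d\mu\cdot \iint (x-y)^2(1-s)\,d\mu\,d\mu .
\end{equation*}
It then suffices to compute the two moment integrals explicitly, using $\int\xi\,d\mu=0$ and $Mv=mv$.

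\textbf{Computing the two integrals.} First, $\int x\,d\mu=v\cdot\int\xi\,d\mu=0$ and $\int x^2\,d\mu=v^TMv=m$. Therefore
\begin{equation*}
\iint (x-y)^2\,d\mu\,d\mu=2m .
\end{equation*}
Next, expand $(x-y)^2 s=(x^2+y^2-2xy)\,\xi\cdot\eta$. For the terms with $x^2$ (and symmetrically $y^2$), integrating in $\eta$ gives
\begin{equation*}
\iint x^2\,\xi\cdot\eta\,d\mu\,d\mu=\Big(\int x^2\xi\,d\mu\Big)\cdot\Big(\int\eta\,d\mu\Big)=0 .
\end{equation*}
For the cross term,
\begin{equation*}
\iint xy\,\xi\cdot\eta\,d\mu\,d\mu=\sum_i\Big(\int x\,\xi_i\,d\mu\Big)^2=|Mv|^2=m^2 .
\end{equation*}
Hence $\iint (x-y)^2 s\,d\mu\,d\mu=-2m^2$, and so
\begin{equation*}
\iint (x-y)^2(1-s)\,d\mu\,d\mu=2m(1+m).
\end{equation*}
Substituting both values into the Cauchy--Schwarz inequality yields
\begin{equation*}
\iint D_v\,d\mu\,d\mu\geq \frac{(2m)^2}{2m(1+m)}=\frac{2m}{m+1},
\end{equation*}
where we used $m>0$, which holds since $M$ is positive definite.

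\textbf{Strictness.} This is the only delicate point. Equality in Cauchy--Schwarz would force $\sqrt{(x-y)^2/(1-s)}=c\sqrt{(x-y)^2(1-s)}$ for $\mu\otimes\mu$-a.e.\ $(\xi,\eta)$. On the set where $x\neq y$, this means $1-\xi\cdot\eta=c^{-1/2}$ is constant. The density $g=e^{Nu}/\gamma$ is smooth and strictly positive, so $\mu\otimes\mu$ is equivalent to $dw\otimes dw$. But $\{x\neq y\}$ has full $dw\otimes dw$ measure, while $\xi\cdot\eta$ is not a.e.\ constant there. This contradiction gives the strict inequality.
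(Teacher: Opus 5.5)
Your proposal is correct and follows the paper's proof essentially step for step. It uses the same Cauchy--Schwarz splitting of $(x-y)^2$ against the weight $(x-y)^2(1-s)$, the same moment computations giving $2m$ and $2m(m+1)$, and the same strictness argument from the positivity of the density of $\mu$. One minor slip: equality forces $1-\xi\cdot\eta=c^{-1}$ rather than $c^{-1/2}$ (with $c\neq 0$ because $\{x\neq y\}$ has positive measure), which does not affect the conclusion.
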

\begin{proof}
Recall that $x=v\cdot \xi$, $y=v\cdot \eta$ and $s=\xi\cdot \eta$.

Cauchy--Schwarz inequality gives that
\begin{equation}\label{x-yCauchy}
\left(\iint_{\mathbb{S}^N\times \mathbb{S}^N}(x-y)^2d\mu(\xi)d\mu(\eta)\right)^2\leq \iint_{\mathbb{S}^N \times\mathbb{S}^N}\frac{(x-y)^2}{1-s}d\mu(\xi)d\mu(\eta) \iint_{\mathbb{S}^N\times\mathbb{S}^N}(x-y)^2(1-s)d\mu(\xi)d\mu(\eta).
\end{equation}

For the left-hand side, since
\begin{equation*}
\int_{\mathbb{S}^N}xd\mu=v\cdot \int_{\mathbb{S}^N}\xi d\mu=0
\end{equation*}
and
\begin{equation*}
\int_{\mathbb{S}^N}x^2d\mu=v\cdot Mv=m,
\end{equation*}
we have
\begin{equation}\label{Cauchy1}
\iint_{\mathbb{S}^N\times \mathbb{S}^N}(x-y)^2d\mu(\xi)d\mu(\eta)=2m
\end{equation}

For the right-hand side, since
\begin{equation*}
\iint_{\mathbb{S}^N \times \mathbb{S}^N}x^2sd\mu(\xi)d\mu(\eta)=\int_{\mathbb{S}^N}x^2\xi\cdot \left(\int_{\mathbb{S}^N}\eta d\mu(\eta)\right)d\mu(\xi)=0
\end{equation*}
and
\begin{equation*}
\iint_{\mathbb{S}^N\times \mathbb{S}^N}xysd\mu(\xi)d\mu(\eta)=\sum_{j=1}^{N+1}\left(\int_{\mathbb{S}^N}x\xi_j d\mu(\xi)\right)\left(\int_{\mathbb{S}^N}y\eta_j d\mu(\eta)\right)=\sum_{j=1}^{N+1}(Mv)_j^2=m^2,
\end{equation*}
we have
\begin{equation}\label{Cauchy2}
\iint_{\mathbb{S}^N}(x-y)^2(1-s)d\mu(\xi)d\mu(\eta)=2m(m+1).
\end{equation}

Plugging \eqref{Cauchy1} and \eqref{Cauchy2} into \eqref{x-yCauchy}, we get
\begin{equation*}
\iint_{\mathbb{S}^N\times \mathbb{S}^N}D_v(\xi,\eta)d\mu(\xi)d\mu(\eta)\geq\frac{2m}{m+1}.
\end{equation*}

If the equality holds, then we have
\begin{equation*}
\frac{x-y}{\sqrt{1-s}}=c(x-y)\sqrt{1-s}
\end{equation*}
for $d\mu(\xi)d\mu(\eta)$-almost every $(\xi,\eta)\in \mathbb{S}^N\times \mathbb{S}^N$ for some constant $c$.

By \eqref{Cauchy1}, the set $\{x\neq y\}$ has positive measure with respect to $d\mu(\xi)d\mu(\eta)$. It follows that $c\neq 0$ and 
\begin{equation*}
1-\xi\cdot \eta=c^{-1}
\end{equation*}
for $d\mu(\xi)d\mu(\eta)$-almost every $(\xi,\eta)\in \mathbb{S}^N\times \mathbb{S}^N$, which is impossible since the measure $\mu$ has a positive density. Hence we get the desired strict inequality.
\end{proof}

With the estimates above, we can prove Theorem \ref{eigenest}.
\begin{proof}[Proof of Theorem \ref{eigenest}]
Suppose $m>\frac{1}{N+1}$ and $t=\frac{1}{N}((N+1)m-1)>0$. By Theorem \ref{iintDVidentity} and Lemma \ref{Dvlower}, we have
\begin{equation*}
2(m-\alpha t)>\frac{2m}{m+1}.
\end{equation*}

Since $\alpha \geq \frac{1}{2}$ and $t>0$, we have
\begin{equation*}
\frac{t}{2}<\frac{m^2}{m+1}.
\end{equation*}
Plugging in $t=\frac{1}{N}((N+1)m-1)$, we get
\begin{equation*}
(N-1)m^2-Nm+1=(m-1)((N-1)m-1)>0.
\end{equation*}

Since $0<m<1$, we get $m<\frac{1}{N-1}$ as desired.
\end{proof}

\section{Rigidity of stable critical points}\label{Sec: instability}
In this section, we prove Theorem \ref{instability}. That is, every stable critical point of ${J}_{\alpha,N}$ restricted to $\mathcal{L}_N$ is constant.

We first study the second variation of $J_{\alpha,N}$. Recall that the probability measure
\begin{equation*}
d\mu=\frac{e^{Nu}}{\gamma}dw
\end{equation*}
on $\mathbb{S}^N$ satisfies
\begin{equation*}
\int_{\mathbb{S}^N}\xi d\mu=0.
\end{equation*}

Differentiating $J_{\alpha,N}$ twice gives, for any smooth functions $h_1$ and $h_2$, 
\begin{equation}\label{secondvariationJ}
\begin{aligned}
\delta^2 J_{\alpha,N}(u)[h_1,h_2]
=&\alpha \int_{\mathbb{S}^N}h_1P_{N}h_2dw-N!\left(\int_{\mathbb{S}^N}h_1h_2d\mu-\int_{\mathbb{S}^N}h_1d\mu\int_{\mathbb{S}^N}h_2d\mu\right).
\end{aligned}
\end{equation}

The tangent space of the constraint set $\mathcal{L}_N$ at $u$ is given by
\begin{equation*}
T_u\mathcal{L}_N=\left\{h:\ \int_{\mathbb{S}^N}\xi_jh d\mu=0,\ j=1,\dots, N+1\right\}.
\end{equation*}

\begin{definition}\label{def:stability}
A critical point $u$ of $J_{\alpha,N}$ restricted to $\mathcal{L}_N$ is called stable if
\begin{equation*}
\delta^2 J_{\alpha,N}(u)[h,h]\geq 0
\end{equation*}
for any smooth function $h\in T_u\mathcal{L}_{N}$.
\end{definition}

Recall the constraint functional $C(u):=\int_{\mathbb{S}^N}\xi e^{Nu}dw$. Then $\mathcal{L}_N=C^{-1}(0)$.

Let $h\in T_u\mathcal{L}_N$ be a smooth function on $\mathbb{S}^N$. For $s\in\R$ and $r\in\R^{N+1}$, consider $\Psi(s,r):=C(u+sh+r\cdot \xi)$. For fixed smooth functions $u$ and $h$, $\Psi$ is a $C^{\infty}$ function in a neighborhood of $(0,0)$ and
\begin{equation*}
D_r\Psi(0,0)=N\gamma M,
\end{equation*}
which is positive definite. Then the implicit function theorem gives a smooth function $r=r(s)$, defined for small $s$, such that $r(0)=0$,
\begin{equation*}
u_s=u+sh+r(s)\cdot \xi
\end{equation*}
and $C(u_s)=0$.

\begin{lemma}\label{delta2J}
For $u_s$ defined above, we have
\begin{equation}\label{ds^2 J al N}
\left.\frac{d^2}{ds^2}\right|_{s=0}J_{\alpha,N}(u_s)=\delta^2J_{\alpha,N}(u)[h,h].
\end{equation}
\end{lemma}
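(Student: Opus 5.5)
The plan is to expand $J_{\alpha,N}(u_s)$ by the chain rule and show that the terms coming from $r(s)$ drop out. Write $\phi(s):=sh+r(s)\cdot\xi$, so $u_s=u+\phi(s)$, $\phi(0)=0$, $\phi'(0)=h+r'(0)\cdot\xi$ and $\phi''(0)=r''(0)\cdot\xi$. Since $J_{\alpha,N}$ is smooth along finite-dimensional smooth families of smooth functions (Lemma \ref{Differentiable}), we have
\begin{equation*}
\left.\frac{d^2}{ds^2}\right|_{s=0}J_{\alpha,N}(u_s)=\delta^2J_{\alpha,N}(u)[\phi'(0),\phi'(0)]+\delta J_{\alpha,N}(u)[\phi''(0)].
\end{equation*}

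\textbf{Step 1: $r'(0)=0$.} Differentiate $C(u_s)=0$ at $s=0$:
\begin{equation*}
N\int_{\mathbb{S}^N}\xi\,(h+r'(0)\cdot\xi)e^{Nu}\,dw=0 .
\end{equation*}
Because $h\in T_u\mathcal{L}_N$, the $h$-part vanishes, leaving $N\gamma M r'(0)=0$. As $M$ is positive definite, $r'(0)=0$, and therefore $\phi'(0)=h$.

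\textbf{Step 2: the first-variation term vanishes.} Here $\phi''(0)=r''(0)\cdot\xi$ is a linear function. Since $u$ is a critical point with $0<\alpha\le 1$, Proposition \ref{Lagrange} says $u$ solves \eqref{paneitz} with no Lagrange multipliers, so $\delta J_{\alpha,N}(u)[k]=0$ for every smooth $k$. A direct check with $k=\xi_j$ confirms this:
\begin{equation*}
\delta J_{\alpha,N}(u)[\xi_j]=\alpha\int_{\mathbb{S}^N}\xi_jP_Nu\,dw+(N-1)!\int_{\mathbb{S}^N}\xi_j\,dw-(N-1)!\int_{\mathbb{S}^N}\xi_j\,d\mu .
\end{equation*}
The last two integrals vanish by symmetry and by the center-of-mass condition. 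The first equals $\alpha\int_{\mathbb{S}^N}uP_N\xi_j\,dw=\alpha N!\int_{\mathbb{S}^N}u\xi_j\,dw$. Testing \eqref{paneitz} against $\xi_j$ gives $\alpha N!\int_{\mathbb{S}^N}u\xi_j\,dw=(N-1)!\int_{\mathbb{S}^N}\xi_j\,d\mu=0$. So this term is zero even without invoking the full critical-point property.

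Combining Steps 1 and 2 gives $\delta^2J_{\alpha,N}(u)[h,h]$, which is \eqref{ds^2 J al N}. To justify the chain-rule formula without functional-analytic subtleties, expand $J_{\alpha,N}(u_s)$ explicitly. The quadratic term is a polynomial in $s$, the linear term is smooth, and $\log\int_{\mathbb{S}^N}e^{Nu_s}\,dw$ is a smooth function of $s$ by differentiation under the integral, since all functions involved are smooth on a compact manifold. Computing its second derivative at $s=0$ reproduces the variance term in \eqref{secondvariationJ}.

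The only point needing care is Step 1, that $r'(0)=0$, which relies on the invertibility of $M$. Everything else is routine bookkeeping.
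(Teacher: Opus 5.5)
Your proof is correct and follows the paper's argument: you get $r'(0)=0$ from $h\in T_u\mathcal{L}_N$ and the positive definiteness of $M$, and then the $\ddot u_0$ (first-variation) term drops out because $u$ solves \eqref{paneitz}. Two small inaccuracies do not affect the argument: your ``direct check'' against $\xi_j$ is just \eqref{paneitz} tested against $\xi_j$, so it still relies on the Lagrange multipliers vanishing (otherwise $\delta J_{\alpha,N}(u)[\xi_j]=\gamma(Ma)_j$), and the quadratic term is a polynomial in $(s,r(s))$, hence smooth in $s$, rather than a polynomial in $s$.
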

\begin{proof}
Differentiating $C(u_s)$ at $s=0$ and using $h\in T_u\mathcal{L}_N$, we have
\begin{equation*}
0=\delta C(u)[h]+r'(0)\delta C(u)[x]=N\int_{\mathbb{S}^N}e^{Nu}dw\int_{\mathbb{S}^N}xhd\mu+r'(0)\delta C(u)[x]=r'(0)\delta C(u)[x].
\end{equation*}
Thus $r'(0)=0$ and $\dot u_0=h$. Here $\dot u_0$ denotes the derivative of $u_s$ in $s$ at $s=0$.

	Differentiating $J_{\alpha,N}(u_s)$ twice and evaluating at $s=0$ yields
	\begin{align*}
		\left.
		\frac{d^2}{ds^2}
		\right|_{s=0}
		\mathcal J_{\alpha,N}(u_s)
		={}&
		\int_{\mathbb S^N}
		\ddot u_0
		\left[
		\alpha P_Nu+(N-1)!(1-g)
		\right]dw\\
		&+
		\alpha\int_{\mathbb S^N}hP_Nh\,dw
			-N!
		\left[
		\int_{\mathbb S^N}h^2g\,dw
		-
		\left(
		\int_{\mathbb S^N}hg\,dw
		\right)^2
		\right].
	\end{align*}
   The first line on the above right-hand side vanishes since $u$ satisfies \eqref{paneitz}. Therefore \eqref{ds^2 J al N} follows from the definition \eqref{secondvariationJ}.
\end{proof}

\begin{lemma}\label{secondvariation}
Let $v,m,x,G,t$ be defined as in \eqref{def:xXGt} and let
\begin{equation*}
\phi:=G-\frac{t}{m}x=G-\frac{(N+1)t}{1+Nt}x.
\end{equation*}
Then we have $\phi \in T_u\mathcal{L}_N$,
\begin{equation*}
\int_{\mathbb{S}^N}\phi d\mu=0
\end{equation*}
and
\begin{equation*}
\begin{aligned}
\frac{1}{N!}\delta^2J_{\alpha,N}(u)[\phi,\phi]=H_N(\alpha,t)
\end{aligned}
\end{equation*}
with
\begin{equation*}
H_N(\alpha,t):=\frac{t\,[1+Nt-\alpha(N+1)]
[1+t(N-\alpha(N+1))]}
{\alpha(N+1)(1+Nt)^2}.
\end{equation*}
Furthermore, we have
\begin{equation}\label{sign}
\text{sgn}H_N(\alpha,t)=\text{sgn}(t(m-\alpha)).
\end{equation}
\end{lemma}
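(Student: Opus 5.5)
The plan is a direct computation. The key observation is that the unknown quantity $\int_{\mathbb S^N}G^2\,d\mu$ cancels, so everything reduces to the moment identities of Lemma~\ref{lem:moment-identities} and \eqref{basic:G-identities}.

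\textbf{Tangency and mean zero.} Take the linear combination $\sum_i v_i(\cdot)$ in \eqref{xjGi:dmu}. This gives
\[
\int_{\mathbb S^N}\xi_jG\,d\mu=\frac{N+1}{N}\Big((Mv)_j-\frac{v_j}{N+1}\Big)=\frac{(N+1)m-1}{N}v_j=t\,v_j ,
\]
while $\int_{\mathbb S^N}\xi_j x\,d\mu=(Mv)_j=m v_j$. Hence $\int_{\mathbb S^N}\xi_j\phi\,d\mu=tv_j-\frac tm\, m v_j=0$, so $\phi\in T_u\mathcal L_N$. Next, $\int_{\mathbb S^N}\phi\,d\mu=0$ follows from $\int_{\mathbb S^N} G\,d\mu=0$ in \eqref{basic:G-identities} together with the center-of-mass condition $\int_{\mathbb S^N} x\,d\mu=0$. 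Consequently the variance term in \eqref{secondvariationJ} reduces to $\int_{\mathbb S^N}\phi^2 d\mu$.

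\textbf{Quadratic form.} Write $\lambda=t/m$ and expand
\[
\alpha\int_{\mathbb S^N}\phi P_N\phi\,dw=\alpha\int_{\mathbb S^N} GP_NG\,dw-2\lambda\alpha\int_{\mathbb S^N} xP_NG\,dw+\lambda^2\alpha\int_{\mathbb S^N} xP_Nx\,dw .
\]
The three terms are handled as follows.
\begin{itemize}
\item For the first term, test the linearized equation \eqref{equation:Gi} (combined along $v$), $\alpha P_NG=N!(gG+x(1-g))$, against $G$. Using \eqref{basic:G-identities}, this gives $N!\big(\int_{\mathbb S^N} G^2d\mu+\frac{t}{\alpha(N+1)}-t\big)$.
\item For the cross term, use self-adjointness and $P_Nx=N!\,x$ to get $\alpha N!\int_{\mathbb S^N} xG\,dw=N!\,\frac{t}{N+1}$.
\item For the last term, $\int_{\mathbb S^N} x^2dw=\frac1{N+1}$ gives $\alpha N!/(N+1)$.
\end{itemize}
On the measure side, $\int_{\mathbb S^N}\phi^2d\mu=\int_{\mathbb S^N} G^2d\mu-2\lambda t+\lambda^2 m=\int_{\mathbb S^N} G^2 d\mu-t^2/m$. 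Subtracting, the $\int_{\mathbb S^N} G^2d\mu$ terms cancel and we obtain
\[
\frac1{N!}\delta^2J_{\alpha,N}(u)[\phi,\phi]=\frac{t}{\alpha(N+1)}-t-\frac{2t^2}{m(N+1)}+\frac{\alpha t^2}{m^2(N+1)}+\frac{t^2}{m}.
\]
Substituting $m=\frac{1+Nt}{N+1}$ and putting everything over the common denominator $\alpha(N+1)(1+Nt)^2$, the numerator should factor as $t[1+Nt-\alpha(N+1)][1+t(N-\alpha(N+1))]$. This factoring is the main, purely algebraic, obstacle. I would verify it by expanding both sides as polynomials in $t$ and $\alpha$, after first checking that both vanish at $t=0$ and at $\alpha=m$.

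\textbf{Sign.} The denominator is positive, since $1+Nt=(N+1)m>0$. The first factor equals $(N+1)(m-\alpha)$. It remains to show that the second factor $1+t(N-\alpha(N+1))$ is positive in the relevant range $\frac12\le\alpha\le1$ and $-\frac1N<t<1$ (from \eqref{range:t}).
\begin{itemize}
\item If $t\ge0$, then $1+t(N-\alpha(N+1))\ge 1+tN-t(N+1)=1-t>0$, using $\alpha\le1$.
\item If $t<0$, then $\alpha\ge\frac12$ gives $N-\alpha(N+1)\le\frac{N-1}2$. If this coefficient is nonpositive the factor is at least $1$. Otherwise, since $t>-\frac1N$, the factor is at least $1-\frac{N-1}{2N}>0$.
\end{itemize}
Hence $\operatorname{sgn}H_N(\alpha,t)=\operatorname{sgn}(t(m-\alpha))$, which is \eqref{sign}. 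I would state explicitly that this sign claim uses $\alpha\in[\frac12,1]$, the range in which it is applied in Theorem~\ref{instability}.
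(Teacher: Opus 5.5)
Your proposal is correct and follows essentially the same route as the paper. You expand $\alpha\int_{\mathbb S^N}\phi P_N\phi\,dw$ using the linearized equation for $G$, self-adjointness and $P_Nx=N!\,x$, and $\int_{\mathbb S^N}G^2\,d\mu$ then cancels against $\int_{\mathbb S^N}\phi^2\,d\mu$. The factorization you left unchecked does hold: with $s=1+Nt$ and $A=\alpha(N+1)$, the bracket $s^2-As^2+A(N-1)ts+A^2t$ equals $(s-A)(s-At)$.

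Your sign argument splits by the sign of $t$ rather than by the sign of $N-\alpha(N+1)$, which is only a cosmetic difference from the paper. You are right to make the restriction $\alpha\le 1$ explicit, since the paper's case analysis also needs it without saying so.
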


\begin{proof}
For every $a\in\R^{N+1}$, Lemma \ref{lem:moment-identities} and
$Mv=mv$ give
\begin{equation*}
\int_{\mathbb S^N}(a\cdot \xi)G\,d\mu=t(a\cdot v),
\qquad
\int_{\mathbb S^N}(a\cdot \xi)x\,d\mu=m(a\cdot v).
\end{equation*}

Then we have
\begin{equation*}
\int_{\mathbb{S}^N}\xi_j\phi d\mu=0
\end{equation*}
for $j=1,\dots N+1$.

By \eqref{basic:G-identities}, we get
\begin{equation*}
\int_{\mathbb{S}^N}\phi d\mu=0.
\end{equation*}

Taking the linear combination of \eqref{equation:Gi} in the direction
$v$ and testing against $G$, we obtain
\begin{equation}\label{G:energy}
\frac{\alpha}{N!}\int_{\mathbb S^N}GP_NG\ud w
=\int_{\mathbb S^N}G^2\,d\mu
-t+\frac{t}{\alpha(N+1)}.
\end{equation}

Since $P_{N}x=N!x$ and
\begin{equation*}
\int_{\mathbb{S}^N}x^2 dw=\frac{1}{N+1},\qquad \int_{\mathbb{S}^N}xG dw=\frac{t}{(N+1)\alpha},
\end{equation*}
we find
\begin{equation}\label{phiPrNphi}
\frac{\alpha}{N!}\int_{\mathbb{S}^N}\phi P_{N}\phi dw=\int_{\mathbb{S}^N}G^2d\mu-t+\frac{t}{\alpha (N+1)}-\frac{2t^2}{1+Nt}+\frac{\alpha (N+1)t^2}{(1+Nt)^2}.
\end{equation}
On the other hand, we have
\begin{equation}\label{phi2}
\int_{\mathbb{S}^N}\phi^2 d\mu=\int_{\mathbb{S}^N}G^2d\mu-\frac{(N+1)t^2}{1+Nt}.
\end{equation}

Subtracting \eqref{phi2} from \eqref{phiPrNphi}, and using equation \eqref{secondvariationJ}, we have
\begin{align*}
\frac{1}{N!}\delta^2J_{\alpha,N}(u)[\phi,\phi]=&-t+\frac{t}{\alpha (N+1)}-\frac{2t^2}{1+Nt}+\frac{\alpha (N+1)t^2}{(1+Nt)^2}+\frac{(N+1)t^2}{1+Nt}\\
=& \frac{t\,[1+Nt-\alpha(N+1)]
[1+t(N-\alpha(N+1))]}
{\alpha(N+1)(1+Nt)^2}\\
=&H_N(\alpha,t).
\end{align*}    

By \eqref{def:xXGt}, we have 
\begin{equation*}
1+Nt-\alpha (N+1)=(N+1)(m-\alpha).
\end{equation*}
It remains to show $1+t(N-\alpha(N+1))>0$. Indeed, let $c=N-\alpha(N+1)$. If $c\geq 0$, by $t\geq -\frac{1}{N}$, we have
\begin{equation*}
1+ct\ge 1-\frac{c}{N}=\frac{N+1}{N}\alpha>0.
\end{equation*}

If $c<0$ and $\alpha<1$, then $t<1$ gives 
\begin{equation*}
1+ct>1+c=(N+1)(1-\alpha)>0.
\end{equation*}

If $\alpha=1$, then $c=-1$ and $1+ct=1-t>0$. This proves \eqref{sign}.
\end{proof}

With the lemma above, we can prove the rigidity of stable critical points when $M=\frac{1}{N+1}I$.

\begin{lemma}\label{M=I/N+1}
Assume $\frac{1}{2}\leq \alpha\leq 1$. If a smooth stable critical point $u$ satisfies
\begin{equation*}
M=\frac{1}{N+1}I,
\end{equation*}
then $u$ is a constant.
\end{lemma}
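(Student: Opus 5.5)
Since $M=\frac{1}{N+1}I$, every unit vector is an eigenvector of $M$ with eigenvalue $m=\frac{1}{N+1}$, so $t=0$ in \eqref{def:xXGt}. For each coordinate direction $i$, I would use $G_i=X_iu$ itself as a test direction, rather than the corrected function $\phi$ of Lemma \ref{secondvariation}; there $H_N(\alpha,0)=0$ and the lemma gives nothing. The aim is to show that $G_i$ is a null direction of the second variation, use stability to turn this into an Euler--Lagrange equation for $G_i$, and compare that equation with \eqref{equation:Gi} to conclude that $g\equiv 1$.

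\emph{Step 1: $G_i$ is an admissible null direction.} With $M_{ij}=\frac{\delta_{ij}}{N+1}$, \eqref{xjGi:dmu} gives $\int x_jG_i\,d\mu=0$ for all $j$, so $G_i\in T_u\mathcal{L}_N$. Also \eqref{Gi:mean} gives $\int G_i\,d\mu=0$, and \eqref{xjGi:dw} gives $\int x_iG_i\,dw=0$. Testing \eqref{equation:Gi} against $G_i$ (this is \eqref{G:energy} with $t=0$) yields
\[
\alpha\int G_iP_NG_i\,dw=N!\int G_i^2\,d\mu .
\]
By \eqref{secondvariationJ}, this means $Q(G_i,G_i)=0$, where $Q:=\delta^2J_{\alpha,N}(u)$.

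\emph{Step 2: stability gives a linear equation for $G_i$.} By stability, $Q\ge 0$ on the linear space $T_u\mathcal{L}_N$, and $Q(G_i,G_i)=0$. Expanding $Q(G_i+\varepsilon h,G_i+\varepsilon h)\ge 0$ for small $\varepsilon$ of both signs gives $Q(G_i,h)=0$ for every smooth $h\in T_u\mathcal{L}_N$. Since $\int G_i\,d\mu=0$, this reads
\[
\int h\bigl(\alpha P_NG_i-N!\,gG_i\bigr)\,dw=0
\qquad\text{whenever } \int \xi_j h g\,dw=0 \text{ for all } j .
\]
The annihilator of $T_u\mathcal{L}_N$ is $\mathrm{span}\{g\xi_1,\dots,g\xi_{N+1}\}$, so
\[
\alpha P_NG_i-N!\,gG_i=g\,(b\cdot\xi)
\]
for some $b=b^{(i)}\in\R^{N+1}$. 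Comparing with \eqref{equation:Gi} gives the pointwise identity
\[
N!\,x_i(1-g)=g\,(b\cdot\xi)\qquad\text{on }\mathbb{S}^N .
\]

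\emph{Step 3: conclude that $u$ is constant.} On the great sphere $\{x_i=0\}$ the identity gives $g\,(b\cdot\xi)=0$, and $g>0$, so $b\cdot\xi$ vanishes on that equator. Hence $b=\beta e_i$ for some $\beta\in\R$. The identity then becomes $x_i\bigl(N!(1-g)-\beta g\bigr)=0$, so $g\equiv \frac{N!}{N!+\beta}$ on $\{x_i\ne 0\}$, and by continuity on all of $\mathbb{S}^N$. Since $\int g\,dw=1$, we get $g\equiv 1$, so $e^{Nu}$ is constant and $u$ is constant.

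The main obstacle is Step 2, specifically justifying that $Q$ is nonnegative on the whole linear space $T_u\mathcal{L}_N$ and correctly identifying its annihilator. Because no mean-zero condition is imposed on $T_u\mathcal{L}_N$, no constant multiple of $g$ appears in the Lagrange multiplier, which keeps the conclusion clean. All test functions involved are smooth since $u$ is smooth, so Definition \ref{def:stability} applies directly.
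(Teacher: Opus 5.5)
Your proof is correct and follows the paper's argument. With $t=0$ one has $\phi=G_i$, so $G_i\in T_u\mathcal{L}_N$ is a null direction of $\delta^2J_{\alpha,N}(u)$ (this is exactly what $H_N(\alpha,0)=0$ says). Stability then forces $\delta^2J_{\alpha,N}(u)[G_i,h]=0$ for all smooth $h\in T_u\mathcal{L}_N$, and comparison with \eqref{equation:Gi} puts $N!\,x_i(1-g)$ in the annihilator of $T_u\mathcal{L}_N$.

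The only difference is how the multiplier is removed. The paper projects an arbitrary $\psi$ onto $T_u\mathcal{L}_N$ and uses $\int x_ix_j(1-g)\,dw=\frac{\delta_{ij}}{N+1}-M_{ij}=0$ to get $b=0$ outright. You instead use the abstract annihilator fact and eliminate the component of $b$ orthogonal to $e_i$ by restricting to the equator $\{x_i=0\}$. Testing your identity against $x_j$ would give $Mb=0$ at once, but your route has the small merit of not using $M=\frac{1}{N+1}I$ in the last step.
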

\begin{proof}
If $M=\frac{1}{N+1}I$, then \eqref{xjGi:dmu} implies that $G_i\in T_u\mathcal{L}_N$.

By \eqref{def:xXGt}, we also have $t=0$. Hence, by Lemma \ref{secondvariation}, we have $\delta^2 J_{\alpha, N}(u)[G_i,G_i]=0$.

By stability of $u$, $\delta^2 J_{\alpha,N}(u)$ is nonnegative on $T_u\mathcal{L}_N$. In particular, we have for any smooth $h\in T_u\mathcal{L}_N$, 
\begin{equation*}
\delta^2 J_{\alpha,N}(u)[G_i+sh,G_i+sh]\geq 0.
\end{equation*}
Differentiating at $s=0$ gives
\begin{equation}\label{Gih1}
\delta^2 J_{\alpha, N}(u)[G_i,h]=0    
\end{equation}
for every smooth $h\in T_u\mathcal{L}_N$.

Using \eqref{equation:Gi}, \eqref{Gi:mean} and the definition of $T_u\mathcal{L}_N$, we get
\begin{equation}\label{Gih2}
\delta^2 J_{\alpha, N}(u)[G_i,h]=N!\int_{\mathbb{S}^N}x_i(1-g)hdw. 
\end{equation}

Let $g_i:=x_i(1-g)$. Then we have
\begin{equation*}
\int_{\mathbb{S}^N}g_i hdw=0.
\end{equation*}

For any $\psi \in C^{\infty}(\mathbb{S}^N)$, define
\begin{equation*}
b_j:=\int_{\mathbb{S}^N}x_j\psi d\mu
\end{equation*}
and 
\begin{equation*}
h:=\psi-(N+1)\sum_{j=1}^{N+1}b_jx_j.
\end{equation*}
Then we have
\begin{equation*}
\int_{\mathbb{S}^N}x_k hd\mu=b_k-(N+1)\sum_{j=1}^{N+1}b_jM_{kj}=b_k-\sum_{j=1}^{N+1}b_j\delta_{kj}=0
\end{equation*}
Hence, $h\in T_u\mathcal{L}_N$. Therefore,
\begin{equation*}
\int_{\mathbb{S}^N}g_i\psi dw=(N+1)\sum_{j=1}^{N+1}b_j\int_{\mathbb{S}^N}g_i x_j dw.
\end{equation*}

Since
\begin{equation*}
\int_{\mathbb{S}^N}g_i x_jdw=\int_{\mathbb{S}^N}x_i x_j(1-g)dw=\frac{\delta_{ij}}{N+1}-M_{ij}=0.
\end{equation*}
It follows that
\begin{equation*}
\int_{\mathbb{S}^N}g_i\psi dw=0
\end{equation*}
for every smooth $\psi$. Hence
\begin{equation*}
g_i=x_i(1-g)=0
\end{equation*}
for $i=1,\dots, N+1$. 

Multiplying by $x_i$ and summing, we have
\begin{equation*}
1-g=\sum_{i=1}^{N+1}x_i^2(1-g)=0.
\end{equation*}
Thus $g\equiv 1$. By definition of $g$, $u$ is constant.
\end{proof}

We now give the proof of Theorem \ref{instability}.
\begin{proof}[Proof of Theorem \ref{instability}]
If $M=\frac{1}{N+1}I$, the conclusion follows from Lemma \ref{M=I/N+1}.

If $M\neq \frac{1}{N+1}I$, since $\text{tr} M=1$, $M$ has an eigenvalue $m>\frac{1}{N+1}$. By Theorem \ref{eigenest}, we have
\begin{equation*}
m<\frac{1}{N-1}\leq \frac{1}{2}\leq \alpha.
\end{equation*}

Since $t>0$ and $m-\alpha<0$, by Lemma \ref{secondvariation}, there exists a $\phi\in T_u\mathcal{L}_N$ such that
\begin{equation*}
\delta^2J_{\alpha,N}(u)[\phi,\phi]<0,
\end{equation*}
a contradiction to the stability of $u$. Hence $M=\frac{1}{N+1}I$. Consequently, we finish the proof of Theorem \ref{instability}.
\end{proof}

\section{Proof of Theorem \ref{main}}\label{proofmain}
In this section, we complete the proof of Theorem \ref{main}.

\begin{proof}[Proof of Theorem \ref{main}]
First suppose $\frac12<\alpha<1$.  By Proposition \ref{minimizerexist}, Proposition \ref{minimizerreg} and Proposition \ref{Lagrange}, the infimum is attained by a smooth function $u$ solving \eqref{paneitz}. 

A minimizer is stable within the constraint.  Theorem \ref{instability} therefore forces $u$ to be constant, and hence
\begin{equation*}
\inf_{u\in \mathcal{L}_{N}}J_{\alpha,N}(u)=0
\end{equation*}
for any $\frac{1}{2}<\alpha<1$. For any fixed $u\in\mathcal{L}_{N}$, continuity in $\alpha$ gives
\begin{equation*}
J_{\frac{1}{2},N}(u)=\lim_{\alpha\downarrow \frac{1}{2}}J_{\alpha,N}(u)\geq0.
\end{equation*}

If $\alpha\geq1$, Beckner's inequality and non-negativity of $P_N$ give
\begin{equation}\label{alpha>1}
J_{\alpha,N}(u)
=J_{1,N}(u)
+\frac{\alpha-1}{2}\int_{\mathbb{S}^N}uP_{N}u dw
\geq0.
\end{equation}
Constants show that the infimum is zero in every case.

If $\frac12\leq\alpha\leq1$ and
$J_{\alpha,N}(u)=0$, then $u\in \mathcal{L}_{N}$ is a global minimizer. Lemma \ref{Differentiable} and the regularity in Proposition \ref{minimizerreg} show that $u$ is smooth and stable. Theorem \ref{instability} forces it to be constant.  If $\alpha>1$, equality in \eqref{alpha>1} forces
\begin{equation*}
\int_{\mathbb{S}^N}uP_{N}udw=0.
\end{equation*}
Since $P_{N}$ is a nonnegative operator, with kernel equal to the constants, $u$ has to be constant.  

Conversely, the equality is attained by any constant function. This completes the proof of Theorem \ref{main}.
\end{proof}

\appendix

\section{Optimality of $\alpha=\frac{1}{2}$ for odd $N$}\label{optimal}

In this appendix, we provide the proof of optimality of $\alpha=\frac{1}{2}$ for odd $N$, using axially-symmetric functions. 
\begin{proposition}\label{optimalodd}
Let $N\geq 3$ be an odd integer. If $\mathcal{I}_{\alpha,N}(u)\ge 0$ for all $u\in \mathcal{L}_{r,N}$, then $\alpha\ge \frac{1}{2}$.
\end{proposition}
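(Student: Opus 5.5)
The strategy mirrors the two-dimensional sharpness argument. Build axially symmetric test functions in $\mathcal{L}_{r,N}$ whose conformal volume splits into two equal bubbles concentrating at antipodal poles. Then compute the leading-order asymptotics of $\mathcal{I}_{\alpha,N}$ as the concentration parameter tends to infinity, and show the leading coefficient is a positive multiple of $(\alpha-\tfrac12)$. Local test functions such as $\epsilon F_2(x_1)$ are not enough: a second-order expansion only gives $\epsilon^2\|F_2\|^2_{L^2}\bigl(\alpha(N+1)!-N!\bigr)/2+O(\epsilon^3)$, which yields the weaker threshold $\frac{1}{N+1}$. So the bubbling construction is genuinely needed.

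\textbf{Construction.} For $\lambda>1$, let $\Phi_\lambda$ be the conformal dilation of $\mathbb{S}^N$ fixing the poles $\pm e_1$ and concentrating at $e_1$; this is the flow $\Phi_s$ of Lemma \ref{lem:commutator} with $e^{s}=\lambda$. Let $\varphi_\lambda$ be its conformal factor, so that $e^{N\varphi_\lambda}$ is the Jacobian, $\int_{\mathbb{S}^N} e^{N\varphi_\lambda}\,dw=1$, and $\alpha=1$ equality holds in Beckner's inequality:
\begin{equation*}
P_N\varphi_\lambda=(N-1)!\,(e^{N\varphi_\lambda}-1).
\end{equation*}
Let $R$ be the reflection $x_1\mapsto -x_1$, and set
\begin{equation*}
e^{Nu_\lambda}:=\tfrac12 e^{N\varphi_\lambda}+\tfrac12 e^{N\varphi_\lambda\circ R}.
\end{equation*}
Then $u_\lambda$ is smooth, axially symmetric and even in $x_1$. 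Evenness kills the $\xi_1$-moment of $e^{Nu_\lambda}$, and axial symmetry kills the remaining moments, so $u_\lambda\in\mathcal{L}_{r,N}$. Moreover $\log\int e^{Nu_\lambda}\,dw=0$, so $\mathcal{I}_{\alpha,N}(u_\lambda)=\frac{\alpha}{2}\int u_\lambda P_Nu_\lambda\,dw+(N-1)!\int u_\lambda\,dw$.

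\textbf{Asymptotics.} To avoid the nonlocality of $P_N$ for odd $N$, work with the representation of Lemma \ref{KH}. Write $h_\lambda:=e^{Nu_\lambda}-1$, which has mean zero, and compare $u_\lambda-\bar u_\lambda$ with $\frac{1}{(N-1)!}\mathcal{K}h_\lambda$ plus a remainder. The natural split is
\begin{equation*}
u_\lambda=\max(\varphi_\lambda,\varphi_\lambda\circ R)-\tfrac{\log 2}{N}+O(1)\ \text{uniformly},
\end{equation*}
with the $O(1)$ error exponentially small away from the equator. The targets are:
\begin{itemize}
\item $\int u_\lambda\,dw=-c_N\log\lambda+O(1)$ with $c_N>0$, where $-c_N\log\lambda$ is the same leading coefficient as $\int\varphi_\lambda\,dw$; each hemisphere carries half of $\int\varphi_\lambda$, and the equatorial transition region contributes only $O(1)$.
\item $\frac12\int u_\lambda P_Nu_\lambda\,dw=\frac{1}{2(N-1)!}\iint -\log(1-\xi\cdot\eta)\,h_\lambda(\xi)h_\lambda(\eta)\,dw\,dw+O(1)$ in the regime where $u_\lambda$ equals $\frac{1}{(N-1)!}\mathcal{K}h_\lambda$ up to a bounded correction.
\end{itemize}

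\textbf{Evaluating the double integral.} Since $h_\lambda+1$ is two half-masses concentrating at $\pm e_1$, the logarithmic divergence comes only from the diagonal self-interactions. Each half-bubble contributes $\frac14$ of the single-bubble self-energy, and there are two of them, giving in total $\frac12$ of the single-bubble energy. The cross term between $e_1$ and $-e_1$ has kernel $-\log(1-\xi\cdot\eta)\to-\log 2$, so it is bounded. For a single bubble, Beckner equality gives $\frac12\int\varphi_\lambda P_N\varphi_\lambda=(N-1)!\,c_N\log\lambda+O(1)$. Hence
\begin{equation*}
\tfrac12\int u_\lambda P_Nu_\lambda\,dw=\tfrac12 (N-1)!\,c_N\log\lambda+O(1),
\end{equation*}
and therefore
\begin{equation*}
\mathcal{I}_{\alpha,N}(u_\lambda)=(N-1)!\,c_N\,\bigl(\tfrac{\alpha}{2}\cdot 1-\tfrac12\bigr)\cdot 2\cdot\tfrac12\log\lambda+O(1)\;\propto\;\bigl(\alpha-\tfrac12\bigr)\log\lambda+O(1).
\end{equation*}
If $\alpha<\frac12$, this tends to $-\infty$, contradicting the hypothesis, so $\alpha\ge\frac12$.

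\textbf{Main obstacle.} The hard step is justifying rigorously that the energy error is $O(1)$ rather than $o(\log\lambda)$-uncontrolled. Concretely, one must show that the quadratic form of $\log\frac{1}{2}\bigl(e^{N\varphi_\lambda}+e^{N\varphi_\lambda\circ R}\bigr)$ differs from the "two half bubbles" prediction by a bounded amount, despite the nonlocal operator for odd $N$. I would first try the kernel identity directly: express $\int uP_Nu=\frac{1}{(N-1)!}\int (P_Nu)\,\mathcal{K}(P_Nu)/(N-1)!$ through Funk--Hecke and the explicit eigenvalues $\kappa_k$. Failing that, I would fall back on the explicit Gegenbauer expansion of the axially symmetric profile, $u_\lambda=\sum_k a_k(\lambda)F_k(x_1)$, where evenness keeps only even $k$. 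One then sums $\frac{\Gamma(k+N)}{\Gamma(k)}a_k^2\|F_k\|^2$ and compares with the single-bubble coefficients, using that only even modes survive and the tail sums match up to $O(1)$.
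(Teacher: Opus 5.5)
There is a genuine gap, and it lies in the leading-order energy, not in the error control. Your test function is fine, but the claim $\frac12\int u_\lambda P_Nu_\lambda\,dw=\frac12(N-1)!\,c_N\log\lambda+O(1)$ is off by a factor of four. Your own two bullet points already show something is wrong. They give $\mathcal{I}_{\alpha,N}(u_\lambda)=(N-1)!\,c_N\bigl(\frac{\alpha}{2}-1\bigr)\log\lambda+O(1)$, which is negative for $1\le\alpha<2$ and so would contradict Beckner's inequality. Your final display reaches $\alpha-\frac12$ only through an arithmetic slip, since $(\frac{\alpha}{2}-\frac12)\cdot 2\cdot\frac12=\frac{\alpha-1}{2}$.

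The error comes from assuming that $u_\lambda$ is the potential of $h_\lambda$ up to a bounded error. Write $\bar f:=\int_{\mathbb{S}^N}f\,dw$. The single-bubble equation gives $\varphi_\lambda-\bar\varphi_\lambda=\mathcal{K}(e^{N\varphi_\lambda}-1)$, with no factor $\frac1{(N-1)!}$. By linearity, $\mathcal{K}h_\lambda=\frac12(\varphi_\lambda+\varphi_\lambda\circ R)-\bar\varphi_\lambda$ exactly. Its energy is indeed half the single-bubble energy, and that is what your count of two quarter self-energies computes. But by your own split, $u_\lambda-\mathcal{K}h_\lambda$ equals $\frac12|\varphi_\lambda-\varphi_\lambda\circ R|$ up to a constant and $O(1)$, and this reaches $\log\lambda$ at the poles. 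Halving the mass at a pole does not halve the logarithmic spike of $u_\lambda$ there, so your ``remainder'' carries energy of the same order as the main term.

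The correct bookkeeping is exact. With $\lambda=\frac{1+t}{1-t}$ and $x=x_1$,
\[
u_\lambda=\varphi_\lambda+\varphi_\lambda\circ R-\log(1-t^2)+w_\lambda,\qquad w_\lambda:=\frac1N\log\Bigl(\tfrac12\bigl[(1+t^2+2tx)^N+(1+t^2-2tx)^N\bigr]\Bigr).
\]
The polynomial inside the logarithm takes values in $[\frac12,4^N]$ on $[-1,1]$, so $w_\lambda$ is smooth with all derivatives bounded uniformly in $\lambda$. Thus $u_\lambda-\bar u_\lambda=2\mathcal{K}h_\lambda+(w_\lambda-\bar w_\lambda)$. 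Now use bilinearity:
the two self-energies are $2(N-1)!\,c_N\log\lambda+O(1)$ each, by Beckner equality;
the cross term $(N-1)!\int(\varphi_\lambda-\bar\varphi_\lambda)(e^{N\varphi_\lambda\circ R}-1)\,dw$ is $O(1)$;
the terms involving $w_\lambda$ are $O(1)$, because $P_Nw_\lambda$ is uniformly bounded with zero mean while $u_\lambda-\log(1-t^2)$ is bounded in $L^1(\mathbb{S}^N)$.
Hence $\int u_\lambda P_Nu_\lambda\,dw=4(N-1)!\,c_N\log\lambda+O(1)$, which is twice (not half) the single-bubble energy. Then $\mathcal{I}_{\alpha,N}(u_\lambda)=(N-1)!\,c_N(2\alpha-1)\log\lambda+O(1)$, which proves the claim.

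With this correction your argument becomes essentially the paper's. The paper takes $u_t=\varphi_t^++\varphi_t^--2\log(1-t^2)$ directly, which is your $u_\lambda$ up to a constant and $w_\lambda$. It computes the self-energies from the $\alpha=1$ equation and the cross term by testing one bubble's equation against the other. In place of your normalization $\int e^{Nu_\lambda}\,dw=1$, it uses two-sided bounds on $\int e^{Nu_t}\,dw$. In particular, the ``main obstacle'' you single out is not where the difficulty lies; the real issue was the leading coefficient.
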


Throughout this appendix, every $O(1)$ term is uniform as $t\to 1^-$.

For axially symmetric functions $u$ of $x=\xi_1\in[-1,1]$, $P_N$ can be written as
\begin{equation*}
P_{r,N}u=   (-1)^{\frac{N}{2}}[(1-x^2)^{\frac{N}{2}}u']^{(N-1)} 
\end{equation*}
when $N$ is even and 
\begin{equation*}
\begin{aligned}
P_{r,N}u
=&
\left(-(1-x^2)\frac{d^2}{dx^2}+Nx\frac{d}{dx}+(\frac{N-1}{2})^2\right)^{\frac{1}{2}}\\
&\cdot \prod_{k=0}^{\frac{N-3}{2}}\left(-(1-x^2)\frac{d^2}{dx^2}+Nx\frac{d}{dx}+k(N-k-1)\right)u    
\end{aligned}
\end{equation*}
when $N$ is odd.

The Beckner functional $J_{\alpha,N}$ reduces to
\begin{equation*}
\begin{aligned}
\mathcal{I}_{\alpha,N}(u)
&=\frac{\alpha}{2}\int_{-1}^{1}(1-x^2)^{\frac{N-2}{2}}uP_{r,N}u \ d x+(N-1)!\int_{-1}^{1}(1-x^2)^{\frac{N-2}{2}}u\ d x\nonumber\\
&-\frac{(N-1)!\sqrt{\pi}\Gamma(\frac{N}{2})}{N\Gamma(\frac{N+1}{2})}\log\left(\frac{\Gamma(\frac{N+1}{2})}{\sqrt{\pi}\Gamma(\frac{N}{2})}\int_{-1}^{1}(1-x^2)^\frac{N-2}{2} e^{Nu} d x\right)    
\end{aligned}
\end{equation*}
and
the zero-center-of-mass restriction set becomes
\begin{equation}\label{AxialLrN}
\mathcal{L}_{r,N}=\left\{u\in H^{\frac{N}{2}}(\mathbb{S}^N):\ u=u(x)\text{ and }\int_{-1}^{1}x(1-x^2)^{\frac{N-2}{2}}e^{Nu} d x=0\right\}.
\end{equation}

Furthermore,  a critical point $u$ of $\mathcal I_{\alpha,N}$ restricted to the set $\mathcal{L}_{r,N}$ solves
\begin{equation}\label{axialLagrange}
\alpha P_{r,N}u+(N-1)!-\frac{(N-1)!}{\gamma}e^{Nu}=\lambda xe^{Nu} \qquad \text{ in }(-1,1),
\end{equation}
where
\begin{equation*}
\gamma=\int_{\mathbb{S}^N}e^{Nu} dw
\end{equation*}
and $\lambda\in \mathbb{R}$ is a Lagrange multiplier. 

It is shown in Proposition \ref{Lagrange} that the Lagrange multiplier $\lambda$ vanishes. Then $u$ solves
\begin{equation}\label{axial}
\alpha P_{r,N}u+(N-1)!-\frac{(N-1)!}{\gamma}e^{Nu}=0 \qquad \text{ in }(-1,1).
\end{equation}

Axially-symmetric spherical harmonics are Gegenbauer polynomials (see \cite[Section 8.93]{GR2007}). Let $F_k$ be the normalized Gegenbauer polynomial with $F_k(1)=1$. With $\nu=\frac{N-1}{2}$, Rodrigues' formula gives
\begin{equation}\label{Rodrigues}
F_k(x)=\frac{(-1)^k\Gamma(\frac{N}{2})}{2^k\Gamma(k+\frac{N}{2})}(1-x^2)^{-\frac{N}{2}+1}\frac{d^k}{dx^k} (1-x^2)^{k+\frac{N}{2}-1}.
\end{equation}

The function $F_k$ satisfies the orthogonality condition
\begin{equation}\label{Fknorm}
\begin{aligned}
    \int_{-1}^{1}(1-x^2)^{\frac{N-2}{2}}F_{k}F_l dx&=\frac{2^{N-1}\Gamma(\frac{N}{2})^2 \Gamma(k+1)}{(k+N-2)!(2k+N-1)}\delta_{kl}.
\end{aligned}    
\end{equation}
Furthermore, it is an eigenfunction of $P_{r,N}$:
\begin{equation}\label{Paneitzeigen}
P_{r,N} F_k=
\begin{cases}
\frac{\Gamma(k+N)}{\Gamma(k)}F_k,&\qquad k\geq 1,\\
0,&\qquad k=0.
\end{cases}
\end{equation}

\begin{proof}[Proof of Proposition \ref{optimalodd}]
We argue by contradiction. Assume for contradiction that $\alpha<\frac{1}{2}$. For $0<t<1$, define
\begin{equation*}
\varphi_t^{\pm}(x):=\log\frac{1-t^2}{1+t^2\mp 2tx}
\end{equation*}
and
\begin{equation*}
T_t(x)=\frac{(1+t^2)x-2t}{1+t^2-2tx}.
\end{equation*}

Then $T_t$ is a diffeomorphism from $[-1,1]$ to $[-1,1]$ and
\begin{equation*}
(1-x^2)^{\frac{N}{2}-1}e^{N\varphi_t^+}=(1-T_t(x)^2)^{\frac{N}{2}-1}T_t'(x).
\end{equation*}

By the change of variables $y=T_t(x)$, we obtain
\begin{equation*}
\int_{-1}^{1}(1-x^2)^{\frac{N}{2}-1}e^{N\varphi_t^+}dx=\int_{-1}^{1}(1-y^2)^{\frac{N-2}{2}}dy=\frac{\sqrt{\pi}\Gamma(\frac{N}{2})}{\Gamma(\frac{N+1}{2})}.
\end{equation*}

Similarly, we have
\begin{equation*}
\int_{-1}^{1}(1-x^2)^{\frac{N}{2}-1}e^{N\varphi_t^-}dx=\frac{\sqrt{\pi}\Gamma(\frac{N}{2})}{\Gamma(\frac{N+1}{2})}.
\end{equation*}

\textbf{Claim 1: $\varphi_t^\pm$ satisfy \eqref{axial} with $\alpha=1$.} 
Indeed, we have
\begin{equation*}
\frac{d^k}{dx^k}\varphi_t^+(x)=(k-1)!2^kt^k(1+t^2-2tx)^{-k}
\end{equation*}
and
\begin{equation*}
\frac{d^k}{dx^k}e^{N\varphi_t^+(x)}=(N)_k2^kt^k(1-t^2)^{N}(1+t^2-2tx)^{-N-k},
\end{equation*}
where 
\begin{equation*}
(s)_k:=
\begin{cases}
    s(s+1)\cdots (s+k-1),&\qquad k\geq 1,\\
    1,&\qquad k=0
\end{cases}    
\end{equation*}
is the Pochhammer symbol for $s\in \mathbb{R}$ and $k\in \mathbb{N}$.

Using \eqref{Rodrigues} and integrating by parts $k$ times, we get for $k\geq 1$,
\begin{equation*}
\int_{-1}^{1}(1-x^2)^{\frac{N}{2}-1}\varphi_t^+F_k dx=\kappa_{k,N}(k-1)!2^kt^k\int_{-1}^{+1}(1-x^2)^{\frac{N}{2}+k-1}(1+t^2-2tx)^{-k}dx
\end{equation*}
and
\begin{equation*}
\int_{-1}^{1}(1-x^2)^{\frac{N}{2}-1}e^{N\varphi_t^+}F_k dx=\kappa_{k,N}(N)_k2^kt^k(1-t^2)^{N}\int_{-1}^{+1}(1-x^2)^{\frac{N}{2}+k-1}(1+t^2-2tx)^{-N-k}dx
\end{equation*}
for some constant $\kappa_{k,N}>0$.

With the change of variables $y=T_t(x)$, we have
\begin{equation*}
\begin{aligned}
(1-t^2)^{N}\int_{-1}^{+1}(1-x^2)^{\frac{N}{2}+k-1}(1+t^2-2tx)^{-N-k}dx
=&\int_{-1}^{+1}(1-y^2)^{\frac{N}{2}+k-1}(1+t^2+2ty)^{-k}dy\\
=&\int_{-1}^{+1}(1-y^2)^{\frac{N}{2}+k-1}(1+t^2-2ty)^{-k}dy.
\end{aligned}
\end{equation*}

Then we have
\begin{equation*}
\frac{\Gamma(k+N)}{\Gamma(k)}\int_{-1}^{1}(1-x^2)^{\frac{N}{2}-1}\varphi_t^+F_k dx=(N-1)!\int_{-1}^{1}(1-x^2)^{\frac{N}{2}-1}e^{N\varphi_t^+}F_k dx.
\end{equation*}

By \eqref{Paneitzeigen}, it follows that
\begin{align*}
\int_{-1}^{1}(1-x^2)^{\frac{N}{2}-1}P_{r,N}\varphi_t^+F_k dx=&\int_{-1}^{1}(1-x^2)^{\frac{N}{2}-1}\varphi_t^+P_{r,N}F_k dx\\=&(N-1)!\int_{-1}^{1}(1-x^2)^{\frac{N}{2}-1}e^{N\varphi_t^+}F_k dx.
\end{align*}

Since
\begin{equation*}
\int_{-1}^{1}(1-x^2)^{\frac{N}{2}-1}F_k dx=0,
\end{equation*}
we have
\begin{equation*}
\int_{-1}^{1}(1-x^2)^{\frac{N}{2}-1}P_{r,N}\varphi_t^+F_k dx=(N-1)!\int_{-1}^{1}(1-x^2)^{\frac{N}{2}-1}(e^{N\varphi_t^+}-1)F_k dx
\end{equation*}
for $k\geq 1$. For $k=0$, both sides are zero and the equality holds automatically. Since the $\{F_k\}$ are complete in $L^2((-1,1);(1-x^2)^{\frac{N}{2}-1}dx)$ (see \cite[Theorem 3.1.5]{Szego1975}) and both sides are
smooth, we have $\varphi_t^+$ satisfies \eqref{axial} with $\alpha=1$.
The same argument also holds for $\varphi_t^-$. Hence the claim follows.

\hspace{1in}

With the change of variable $y=T_t(x)$, we can compute
\begin{equation*}
\int_{-1}^{1}(1-x^2)^{\frac{N}{2}-1}\varphi_t^{\pm}dx=\frac{\sqrt{\pi}\Gamma(\frac{N}{2})}{\Gamma(\frac{N+1}{2})}\log(1-t^2)+O(1)
\end{equation*}
and
\begin{equation*}
\int_{-1}^{1}(1-x^2)^{\frac{N}{2}-1}\varphi_t^{\pm}e^{N\varphi_t^\pm}dx=-\frac{\sqrt{\pi}\Gamma(\frac{N}{2})}{\Gamma(\frac{N+1}{2})}\log(1-t^2)+O(1).
\end{equation*}

\textbf{Claim 2: }
\begin{equation}\label{claim 2}
\int_{-1}^{1}(1-x^2)^{\frac{N}{2}-1}\varphi_t^{\pm}P_{r,N}\varphi_t^{\pm}dx=-2(N-1)!\frac{\sqrt{\pi}\Gamma(\frac{N}{2})}{\Gamma(\frac{N+1}{2})}\log(1-t^2)+O(1).
\end{equation}
Indeed, write $\varphi_t^+=\log(1-t^2)+v_t^+$ with $v_t^+=-\log(1+t^2-2tx)$. Since $\varphi_t^+$ satisfies \eqref{axial} with $\alpha=1$, testing it against $\varphi_t^+$, we obtain that
\begin{equation*}
\begin{aligned}
\int_{-1}^{1}(1-x^2)^{\frac{N}{2}-1}\varphi_t^{+}P_{r,N}\varphi_t^{+}dx
=&(N-1)!\int_{-1}^{1}(1-x^2)^{\frac{N}{2}-1}\varphi_t^{+}(e^{N\varphi_t^+}-1)dx\\
=&(N-1)!\int_{-1}^{1}(1-x^2)^{\frac{N}{2}-1}v_t^{+}(e^{N\varphi_t^+}-1)dx
\end{aligned}
\end{equation*}

Since $|v_t^+(x)|\leq C(1+|\log(1-x)|)$, we have
\begin{equation}\label{vt}
\int_{-1}^{1}(1-x^2)^{\frac{N}{2}-1}v_t^{+}dx=O(1).    
\end{equation}

Set $x=\frac{(1+t)^2-(1-t)^2r}{(1+t)^2+(1-t)^2r}$. Then we have
\begin{equation}\label{vtvarphit}
\begin{aligned}
\int_{-1}^{1}(1-x^2)^{\frac{N}{2}-1}v_t^{+}e^{N\varphi_t^+}dx
=&2^{N-1}\int_0^{\infty}\frac{r^{\frac{N}{2}-1}}{(1+r)^N}\left[-2\log(1-t)-\log(1+r)+\log(1+\frac{(1-t)^2}{(1+t)^2}r)\right]dr\\
=&-\frac{2\sqrt{\pi}\Gamma(\frac{N}{2})}{\Gamma(\frac{N+1}{2})}\log(1-t^2)+O(1).
\end{aligned}
\end{equation}

Combining \eqref{vt} and \eqref{vtvarphit} gives \eqref{claim 2} for $\varphi_t^+$. The proof for $\varphi_t^-$ is similar.

\hspace{1in}

\textbf{Claim 3: }
\begin{equation*}
\int_{-1}^{1}(1-x^2)^{\frac{N}{2}-1}\varphi_t^{+}P_{r,N}\varphi_t^{-}dx=O(1).
\end{equation*}
Indeed, testing the equation for $\varphi_t^-$ against $\varphi_t^+$, we have
\begin{equation*}
\begin{aligned}
\int_{-1}^{1}(1-x^2)^{\frac{N}{2}-1}\varphi_t^{+}P_{r,N}\varphi_t^{-}dx
=&(N-1)!\int_{-1}^{1}(1-x^2)^{\frac{N}{2}-1}\varphi_t^{+}(e^{N\varphi_t^-}-1)dx\\
=&(N-1)!\int_{-1}^{1}(1-x^2)^{\frac{N}{2}-1}v_t^{+}(e^{N\varphi_t^-}-1)dx.
\end{aligned}
\end{equation*}

For $x\leq 0$, since $v_t^+$ is uniformly bounded and
\begin{equation}\label{varphit}
\int_{-1}^{1} (1-x^2)^{\frac{N}{2}-1}e^{N\varphi_t^-}dx=\frac{\sqrt{\pi}\Gamma(\frac{N}{2})}{\Gamma(\frac{N+1}{2})},  
\end{equation}
we have
\begin{equation*}
\int_{-1}^{0} (1-x^2)^{\frac{N}{2}-1}|v_t^{+}|e^{N\varphi_t^-}dx\leq C.  
\end{equation*}

For $x\geq 0$, we have
\begin{equation*}
e^{\varphi_t^-}\leq 1-t^2.
\end{equation*}
Hence, 
\begin{equation*}
\int_{0}^{1} (1-x^2)^{\frac{N}{2}-1}|v_t^{+}|e^{N\varphi_t^-}dx\leq C(1-t^2)^N\int_{0}^{1} (1-x^2)^{\frac{N}{2}-1}|v_t^{+}|dx  \leq C(1-t^2)^N.
\end{equation*}

Combining with \eqref{vt}, Claim 3 follows.

\hspace{1in}

Finally, define 
\begin{equation*}
u_t(x)=\varphi_t^++\varphi_t^--2\log(1-t^2).
\end{equation*}
Then $u_t$ is an even function with
\begin{equation*}
\int_{-1}^{1}x(1-x^2)^{\frac{N}{2}-1}e^{Nu_t}dx=0.
\end{equation*}
Hence, $u_t\in \mathcal{L}_{r,N}$.

\textbf{Claim 4: }
\begin{equation*}
\log\left(\frac{\Gamma(\frac{N+1}{2})}{\sqrt{\pi}\Gamma(\frac{N}{2})}\int_{-1}^{1}(1-x^2)^{\frac{N-2}{2}} e^{Nu_t} dx\right)=-N\log(1-t^2)+O(1).
\end{equation*}

Indeed, by definition of $u_t$, we have
\begin{equation*}
4^{-N}(1-t^2)^{-N}\max\{e^{N\varphi_t^+},e^{N\varphi_t^-}\}\leq e^{Nu_t}\leq (1-t^2)^{-N}\max\{e^{N\varphi_t^+},e^{N\varphi_t^-}\}.
\end{equation*}
Hence, we have
\begin{equation*}
4^{-N-1}(1-t^2)^{-N}\left(e^{N\varphi_t^+}+e^{N\varphi_t^-}\right)\leq e^{Nu_t}\leq (1-t^2)^{-N}\left(e^{N\varphi_t^+}+e^{N\varphi_t^-}\right).
\end{equation*}

By \eqref{varphit}, we have
\begin{equation*}
c(1-t^2)^{-N}\leq \frac{\Gamma(\frac{N+1}{2})}{\sqrt{\pi}\Gamma(\frac{N}{2})}\int_{-1}^{1}(1-x^2)^\frac{N-2}{2} e^{Nu_t} dx\leq C(1-t^2)^{-N}
\end{equation*}
for some $c,C>0$. Then Claim 4 follows.

By Claim 2 and Claim 3, we have
\begin{equation*}
\begin{aligned}
\int_{-1}^{1}(1-x^2)^{\frac{N}{2}-1}u_tP_{r,N}u_tdx
=&\int_{-1}^{1}(1-x^2)^{\frac{N}{2}-1}\varphi_t^+P_{r,N}\varphi_t^+dx+\int_{-1}^{1}(1-x^2)^{\frac{N}{2}-1}\varphi_t^-P_{r,N}\varphi_t^-dx\\
&+2\int_{-1}^{1}(1-x^2)^{\frac{N}{2}-1}\varphi_t^+P_{r,N}\varphi_t^-dx\\
=&-4(N-1)!\frac{\sqrt{\pi}\Gamma(\frac{N}{2})}{\Gamma(\frac{N+1}{2})}\log(1-t^2)+O(1).
\end{aligned}
\end{equation*}

Meanwhile, since
\begin{equation*}
u_t=-\log(1+t^2-2tx)-\log(1+t^2+2tx),
\end{equation*}
we have
\begin{equation*}
\int_{-1}^{1}(1-x^2)^{\frac{N}{2}-1}u_tdx=O(1).
\end{equation*}

Combining with Claim 4, we obtain
\begin{equation*}
\mathcal{I}_{\alpha,N}(u_t)=-(N-1)!\frac{\sqrt{\pi}\Gamma(\frac{N}{2})}{\Gamma(\frac{N+1}{2})}(2\alpha-1)\log(1-t^2)+O(1).
\end{equation*}

If $\alpha<\frac{1}{2}$, then we have $\mathcal{I}_{\alpha,N}(u_t)\to -\infty$ as $t\to 1^-$, a contradiction. This proves the sharpness of $\alpha=\frac{1}{2}$.
\end{proof}

\section{Technicalities}\label{Technicalities}
In this appendix, we record the differentiability needed for the constraint argument.

\begin{lemma}\label{Differentiable}
The functionals
\begin{equation*}
Z(u):=\int_{\mathbb{S}^N}e^{Nu}dw,\qquad C(u):=\int_{\mathbb{S}^N}\xi e^{Nu}dw
\end{equation*}
are continuously Fr\'echet differentiable on subspace of $H^{\frac{N}{2}}(\mathbb{S}^N)$ with
\begin{equation*}
\delta Z(u)[h]=N\int_{\mathbb{S}^N}he^{Nu}dw,\qquad \delta C(u)[h]:=N\int_{\mathbb{S}^N}\xi he^{Nu}dw.
\end{equation*}
\end{lemma}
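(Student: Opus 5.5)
The plan is to prove the statement for $Z$ and then deduce the statement for $C$, since each component of $C$ is $Z$ with the bounded smooth weight $\xi_j$ inserted; the same argument applies verbatim to every component. The basic tool is the exponential integrability given by Beckner's inequality ($J_{1,N}\geq 0$). Applied to $\pm p w$ for any $w\in H^{\frac N2}(\mathbb S^N)$ and $p>1$, and combined with the bound on the mean $|\int w\,dw|\le\|w\|_{L^2}$, it gives
\begin{equation*}
\int_{\mathbb S^N} e^{pN|w|}\,dw\le 2\exp\Big(C_{N}p\|w\|_{H^{\frac N2}}+C_N p^2\|w\|_{H^{\frac N2}}^2\Big).
\end{equation*}
Thus $e^{N|w|}\in L^p$ for every $p<\infty$, with bounds that are uniform on bounded sets of $H^{\frac N2}$. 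By Sobolev embedding we also have $H^{\frac N2}\hookrightarrow L^q$ for every $q<\infty$, so $\|h\|_{L^q}\le C_q\|h\|_{H^{\frac N2}}$.

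\textbf{Step 1: well-definedness and boundedness of the derivative.} Fix $u$. The map $h\mapsto N\int h e^{Nu}\,dw$ is linear, and by H\"older it is bounded by $N\|h\|_{L^2}\|e^{Nu}\|_{L^2}\le C\|h\|_{H^{\frac N2}}$. It is therefore a bounded linear functional on $H^{\frac N2}$.

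\textbf{Step 2: the Fr\'echet remainder.} Using the pointwise inequality $|e^{a}-1-a|\le \frac{a^2}{2}e^{|a|}$ with $a=Nh$, we obtain
\begin{equation*}
\Big|Z(u+h)-Z(u)-N\int h e^{Nu}\,dw\Big|\le \frac{N^2}{2}\int h^2 e^{N|h|}e^{Nu}\,dw.
\end{equation*}
Apply H\"older with three factors in $L^3$: the right-hand side is at most $\frac{N^2}{2}\|h\|_{L^6}^2\,\|e^{N|h|}\|_{L^3}\,\|e^{Nu}\|_{L^3}$. For $\|h\|_{H^{\frac N2}}\le 1$, the exponential bound above keeps $\|e^{N|h|}\|_{L^3}$ uniformly bounded. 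The whole remainder is therefore $O(\|h\|_{H^{\frac N2}}^2)=o(\|h\|_{H^{\frac N2}})$, which gives Fr\'echet differentiability with the stated derivative.

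\textbf{Step 3: continuity of $u\mapsto \delta Z(u)$.} This is the step that needs the most care, though it is still routine. We write
\begin{equation*}
\sup_{\|h\|\le1}\big|\delta Z(u_1)[h]-\delta Z(u_2)[h]\big|\le NC\,\|e^{Nu_1}-e^{Nu_2}\|_{L^2}.
\end{equation*}
Then we use $|e^{a}-e^{b}|\le|a-b|(e^{a}+e^{b})$ and H\"older once more: $\|e^{Nu_1}-e^{Nu_2}\|_{L^2}\le N\|u_1-u_2\|_{L^4}\big(\|e^{Nu_1}\|_{L^4}+\|e^{Nu_2}\|_{L^4}\big)$. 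The exponential factors are bounded locally uniformly on bounded sets by the Beckner-derived estimate, and $\|u_1-u_2\|_{L^4}\le C\|u_1-u_2\|_{H^{\frac N2}}$, so this tends to zero as $u_2\to u_1$ in $H^{\frac N2}$.

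Finally, the inserted weight $\xi_j$ satisfies $|\xi_j|\le1$, so it does not affect any of these estimates; this yields the same three steps for $C(u)$ and completes the argument.
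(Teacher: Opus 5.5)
Your proposal is correct and follows essentially the same route as the paper: the second-order Taylor remainder bound, then H\"older with $\|h\|_{L^6}^2$ against two $L^3$ exponential factors controlled by Beckner's inequality and Sobolev embedding, then a mean-value estimate for continuity of the derivative, with $C$ handled componentwise. The only cosmetic difference is that you prove $e^{Nu_2}\to e^{Nu_1}$ in $L^2$ (paired with $\|h\|_{L^2}$), where the paper proves convergence in $L^{6/5}$.
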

\begin{proof}
Note that for any real number $p,q$, we have
\begin{equation*}
|e^{p+q}-e^p-qe^p|\leq\frac12 q^2e^{|p|+|q|}.
\end{equation*}

With $p=Nu$ and $q=Nh$, H\"older's inequality gives
\begin{align*}
\int_{\mathbb{S}^N}|h|^2e^{N|u|+N|h|}dw
&\leq \|h\|_{L^6(\mathbb{S}^N)}^2
\left(\int_{\mathbb{S}^N}e^{3N|u|}dw\right)^{1/3}
\left(\int_{\mathbb{S}^N}e^{3N|h|}dw\right)^{1/3}.
\end{align*}

By Sobolev embedding $H^{N/2}(\mathbb{S}^N)\hookrightarrow L^6(\mathbb{S}^N)$ and Beckner's inequality with $\alpha=1$ applied to $\pm3u$ and $\pm3h$, we have the differentiability of $Z(u)$. To prove continuity of $\delta Z(u)$, note that if $u_j\to u$ in $H^{N/2}(\mathbb{S}^N)$, the inequality
\begin{equation*}
|e^{Nu_j}-e^{Nu}|
\leq N|u_j-u|e^{N|u|+N|u_j-u|}
\end{equation*}
along with the same H\"older's inequality and locally uniform exponential bounds imply 
\begin{equation*}
e^{Nu_j}\to e^{Nu} \text{ in } L^{\frac{6}{5}}(\mathbb{S}^N).    
\end{equation*}

The proof for the vector-valued functional $C(u)$ is identical componentwise, since each coordinate function $\xi_i$ is bounded.

\end{proof}

\section*{Acknowledgements}
 The research of  C. Gui is supported by NSFC Key Program (Grant No.12531010), University of Macau research grants CPG2024-00016-FST, CPG2025-00032-FST, CPG2026-00027-FST, SRG2023-00011-FST, MYRGGRG2023-00139-FST-UMDF, UMDF Professorial Fellowship
of Mathematics, Macao SAR FDCT 0003/2023/RIA1 and Macao SAR FDCT 0024/2023/RIB1. The research of J. Wei is partially supported by General Research Grant of HKSAR GRF 14309824. The authors acknowledge the use of AI tools. All mathematical arguments and proofs in the final manuscript were checked and written by the authors.


\medskip

\begin{thebibliography}{99}


\bibitem{Beckner1993}
W.~Beckner,
\emph{Sharp Sobolev inequalities on the sphere and the Moser--Trudinger inequality},
Ann. of Math. (2) \textbf{138} (1993), no.~1, 213--242.

\bibitem{CLY2019}
J.~S.~Case, Y.-J.~Lin, and W.~Yuan,
\emph{Conformally variational Riemannian invariants},
Trans. Amer. Math. Soc. \textbf{371} (2019), no.~11, 8217--8254.

\bibitem{CM2023}
J.~S.~Case and A.~Malchiodi,
\emph{A factorization of the GJMS operators of special Einstein products and applications},
J. Lond. Math. Soc. (2) \textbf{110} (2024), no.~5,
Paper No.~e70023, 17 pp.

\bibitem{ChangGui202}
S.-Y.~A.~Chang and C.~Gui,
\emph{A sharp inequality on the exponentiation of functions on the sphere},
Comm. Pure Appl. Math. \textbf{76} (2023), no.~6, 1303--1326.

\bibitem{ChangHang2022}
S.-Y.~A.~Chang and F.~B.~Hang,
\emph{Improved Moser--Trudinger--Onofri inequality under constraints},
Comm. Pure Appl. Math. \textbf{75} (2022), no.~1, 197--220.

\bibitem{ChangYang1987}
S.-Y.~A.~Chang and P.~C.~Yang,
\emph{Prescribing Gaussian curvature on $S^2$},
Acta Math. \textbf{159} (1987), nos.~3--4, 215--259.

\bibitem{ChangYang1988}
S.-Y.~A.~Chang and P.~C.~Yang,
\emph{Conformal deformation of metrics on $S^2$},
J. Differential Geom. \textbf{27} (1988), no.~2, 259--296.

\bibitem{ChangYang1995}
S.-Y.~A.~Chang and P.~C.~Yang,
\emph{Extremal metrics of zeta function determinants on $4$-manifolds},
Ann. of Math. (2) \textbf{142} (1995), no.~1, 171--212.

\bibitem{ChangYang1997}
S.-Y.~A.~Chang and P.~C.~Yang,
\emph{On uniqueness of solutions of $n$th-order differential equations in conformal geometry},
Math. Res. Lett. \textbf{4} (1997), no.~1, 91--102.

\bibitem{DHL2000}
Z.~Djadli, E.~Hebey, and M.~Ledoux,
\emph{Paneitz-type operators and applications},
Duke Math. J. \textbf{104} (2000), no.~1, 129--169.

\bibitem{DM2008}
Z.~Djadli and A.~Malchiodi,
\emph{Existence of conformal metrics with constant $Q$-curvature},
Ann. of Math. (2) \textbf{168} (2008), no.~3, 813--858.


\bibitem{FFGG1998}
J.~Feldman, R.~Froese, N.~Ghoussoub, and C.~Gui,
\emph{An improved Moser--Aubin--Onofri inequality for axially symmetric functions on $S^2$},
Calc. Var. Partial Differential Equations \textbf{6} (1998), no.~2, 95--104.

\bibitem{FG2013}
C.~Fefferman and C.~R.~Graham,
\emph{Juhl's formulae for GJMS operators and $Q$-curvatures},
J. Amer. Math. Soc. \textbf{26} (2013), no.~4, 1191--1207.

\bibitem{FrankLieb2012}
R.~L. Frank and E.~H. Lieb,
\emph{A new, rearrangement-free proof of the sharp Hardy--Littlewood--Sobolev inequality},
in \emph{Spectral Theory, Function Spaces and Inequalities},
Oper. Theory Adv. Appl., Vol.~219,
Birkh\"auser/Springer Basel AG, Basel, 2012, pp.~55--67.

\bibitem{GL2010}
N.~Ghoussoub and C.-S.~Lin,
\emph{On the best constant in the Moser--Onofri--Aubin inequality},
Comm. Math. Phys. \textbf{298} (2010), no.~3, 869--878.

\bibitem{GR2007}
I.S.~Gradshteyn, I.M.~Ryzhik, \emph{Table of Integrals, Series and Products,} 7th ed., Academic Press, San Diego, 2007.

\bibitem{GJMS1992}
C.~R.~Graham, R.~Jenne, L.~J.~Mason, and G.~A.~J.~Sparling,
\emph{Conformally invariant powers of the Laplacian. I. Existence},
J. Lond. Math. Soc. (2) \textbf{46} (1992), no.~3, 557--565.

\bibitem{GM2018}
C.~Gui and A.~Moradifam,
\emph{The sphere covering inequality and its applications},
Invent. Math. \textbf{214} (2018), no.~3, 1169--1204.

\bibitem{GHM2020}
C.~Gui, F.~Hang, and A.~Moradifam,
\emph{The sphere covering inequality and its dual},
Comm. Pure Appl. Math. \textbf{73} (2020), no.~12, 2685--2707.

\bibitem{GHX2021}
C.~Gui, Y.~Hu, and W.~Xie,
\emph{Improved Beckner's inequality for axially symmetric functions on $\mathbb{S}^4$},
Rev. Mat. Iberoam. \textbf{40} (2024), no.~1, 355--388.

\bibitem{GHW2022}
C.~Gui, Y.~Hu, and W.~Xie,
\emph{Improved Beckner's inequality for axially symmetric functions on $\mathbb{S}^n$},
J. Funct. Anal. \textbf{282} (2022), no.~5,
Paper No.~109335, 47 pp.

\bibitem{GLWY2025}
C.~Gui, T.~Li, J.~Wei, and Z.~Ye,
\emph{Sharp Beckner's inequalities for axially symmetric functions on $\mathbb{S}^6$ and $\mathbb{S}^8$},
Adv. Math. \textbf{480} (2025), Paper No.~110487, 64 pp.


\bibitem{GW2000}
C.~Gui and J.~Wei,
\emph{On a sharp Moser--Aubin--Onofri inequality for functions on $S^2$ with symmetry},
Pacific J. Math. \textbf{194} (2000), no.~2, 349--358.

\bibitem{GurMal2015}
M.~J.~Gursky and A.~Malchiodi,
\emph{A strong maximum principle for the Paneitz operator and a non-local flow for the $Q$-curvature},
J. Eur. Math. Soc. (JEMS) \textbf{17} (2015), no.~9, 2137--2173.

\bibitem{JinLiXiong2017}
T.~Jin, Y.~Y.~Li, and J.~Xiong,
\emph{The Nirenberg problem and its generalizations: a unified approach},
Math. Ann. \textbf{369} (2017), nos.~1--2, 109--151.

\bibitem{LWY2022}
T.~Li, J.~Wei, and Z.~Ye,
\emph{On sharp Beckner's inequality for axially symmetric functions on $\mathbb{S}^4$},
Math. Res. Lett. \textbf{31} (2024), no.~5, 1523--1550.

\bibitem{LiXiong2019}
Y.~Y.~Li and J.~Xiong,
\emph{Compactness of conformal metrics with constant $Q$-curvature. I},
Adv. Math. \textbf{345} (2019), 116--160.

\bibitem{Mal2006}
A.~Malchiodi,
\emph{Compactness of solutions to some geometric fourth-order equations},
J. Reine Angew. Math. \textbf{594} (2006), 137--174.

\bibitem{Mori1998}
M.~Morimoto,
\emph{Analytic Functionals on the Sphere},Translations of Mathematical Monographs, Vol.~178,
American Mathematical Society, Providence, RI, 1998.



\bibitem{Paneitz2008}
S.~M.~Paneitz,
\emph{A Quartic Conformally Covariant Differential Operator for Arbitrary
    Pseudo-Riemannian Manifolds (Summary)},
SIGMA Symmetry Integrability Geom. Methods Appl. \textbf{4} (2008),
Paper No.~036, 3 pp.

\bibitem{Seeley1967}
R.~T. Seeley,
\emph{Complex powers of an elliptic operator},
in \emph{Singular Integrals (Proc. Sympos. Pure Math., Chicago, Ill., 1966)},
Proc. Sympos. Pure Math., Vol.~10, Amer. Math. Soc., Providence, RI, 1967, pp.~288--307.

\bibitem{SSTW2019}
Y.~Shi, J.~Sun, G.~Tian, and D.~Wei,
\emph{Uniqueness of the mean field equation and rigidity of Hawking mass},
Calc. Var. Partial Differential Equations \textbf{58} (2019), no.~2,
Paper No.~41, 16 pp.

 \bibitem{Szego1975}
 G.~Szeg\H{o},
 \emph{Orthogonal Polynomials},
 4th ed., American Mathematical Society Colloquium Publications,
 Vol.~23, American Mathematical Society, Providence, RI, 1975.

\bibitem{Taylor1991}
M.~E. Taylor,
\emph{Pseudodifferential Operators and Nonlinear PDE},
Progress in Mathematics, Vol.~100, Birkh\"auser Boston, Boston, MA, 1991.

\bibitem{Widom1988}
H.~Widom,
\emph{On an inequality of Osgood, Phillips and Sarnak},
Proc. Amer. Math. Soc. \textbf{102} (1988), no.~3, 773--774.

\bibitem{WX1998}
J.~Wei and X.~Xu,
\emph{On conformal deformations of metrics on $S^n$},
J. Funct. Anal. \textbf{157} (1998), no.~1, 292--325.

\bibitem{Zhang2025}
S. Zhang, \emph{The moving plane method and the uniqueness of high-order elliptic equation with GJMS operator}, Proc. Roy. Soc. Edinburgh Sect. A, First View (2025), 1–45.

\end{thebibliography}
\end{document}